\begin{document}
\newcommand {\emptycomment}[1]{} 

\baselineskip=14pt
\newcommand{\nc}{\newcommand}
\newcommand{\delete}[1]{}
\nc{\mfootnote}[1]{\footnote{#1}} 
\nc{\todo}[1]{\tred{To do:} #1}

\nc{\mlabel}[1]{\label{#1}}  
\nc{\mcite}[1]{\cite{#1}}  
\nc{\mref}[1]{\ref{#1}}  
\nc{\mbibitem}[1]{\bibitem{#1}} 

\delete{
\nc{\mlabel}[1]{\label{#1}  
{\hfill \hspace{1cm}{\bf{{\ }\hfill(#1)}}}}
\nc{\mcite}[1]{\cite{#1}{{\bf{{\ }(#1)}}}}  
\nc{\mref}[1]{\ref{#1}{{\bf{{\ }(#1)}}}}  
\nc{\mbibitem}[1]{\bibitem[\bf #1]{#1}} 
}

\newtheorem{thm}{Theorem}[section]
\newtheorem{pro}[thm]{Proposition}
\newtheorem{lem}[thm]{Lemma}
\newtheorem{cor}[thm]{Corollary}
\newtheorem{claim}{Claim}[section]
\theoremstyle{definition}
\newtheorem{defi}[thm]{Definition}
\newtheorem{rem}[thm]{Remark}
\newtheorem{exa}[thm]{Example}

\renewcommand{\labelenumi}{{\rm(\alph{enumi})}}
\renewcommand{\theenumi}{\alph{enumi}}

\nc{\tred}[1]{\textcolor{red}{#1}}
\nc{\tblue}[1]{\textcolor{blue}{#1}}
\nc{\tgreen}[1]{\textcolor{green}{#1}}
\nc{\tpurple}[1]{\textcolor{purple}{#1}}
\nc{\btred}[1]{\textcolor{red}{\bf #1}}
\nc{\btblue}[1]{\textcolor{blue}{\bf #1}}
\nc{\btgreen}[1]{\textcolor{green}{\bf #1}}
\nc{\btpurple}[1]{\textcolor{purple}{\bf #1}}

\newcommand{\gl}{\mathfrak {gl}}

\def \i {\mathrm{i}}
\def \pd {\phi_{_d}}
\def \hls {\ensuremath{(\v,\cdot,\psi)}}
\def \o {\ensuremath{\mathcal{O}}}
\def \rl {\ensuremath{r^{\sharp}}}
\def \srl {\ensuremath{r_{21}^{\sharp}}}
\def \e {\eta}
\def \pr {^{\prime}}
\def \a{\alpha}
\def \b{\beta}
\def \r {\rho}
\def \rs {\rho^*}
\def \ro {\rho^\circ}
\nc{\rmi}{{\mathrm{I}}} \nc{\rmii}{{\mathrm{II}}}
\nc{\rmiii}{{\mathrm{III}}}
\def\t{\otimes}
\def\d{\Delta}
\def\ds {\Delta^*}
\def\la{\langle}
\def\ra{\rangle}
\renewcommand {\v} {\ensuremath{V}}
\def \bb {\mathfrak{B}}
\def\k{\mathfrak{k}}
\def\h{\mathfrak{h}}
\def\g{\mathfrak{g}}
\def\gs{\mathfrak{g}^*}
\def\ad{\mathrm{ad}}
\def\ads{\mathrm{ad}^*}
\def\ado{\mathrm{ad}^{\circ}}
\nc \adx{\ad_x} \nc \adsx{\ads_x} \nc \adox{\ado_x}
\def\da{\mathfrak{ad}}
\def\das{\mathfrak{ad}^*}
\def\dao{\mathfrak{ad}^{\circ}}
\def\xg{_\g}
\def\p{\phi_{_\g}}
\def\pp{\phi_{_{\g\pr}}}
\def\ps {\phi_{_\g}^*}
\def\j{\wedge}
\def\lr{\longrightarrow}
\def \id{\mathrm{Id}}
\def \bra {[\cdot,\cdot]\xg}
\def \hlg {\ensuremath{(\g,[\cdot,\cdot]\xg,\p)}}
\def \hlgs {\ensuremath{(\gs,[\cdot,\cdot]_{\gs},\ps)}}
\def \hlk {(\k,[\cdot,\cdot]_\k,\phi_{_\k})}
\def \lj {\j\cdots\j}
\def \lt {\t\cdots\t}
\nc \x [1] {\ensuremath{x_{#1}}}
\nc \y [1]{\ensuremath{y_{#1}}}
\nc \xqh[1]  {\ensuremath{\sum\limits_{#1}}}
\nc \sxqh[2] {\ensuremath{\sum\limits_{#1}^{#2}}}
\newcommand \rep {\ensuremath{(\v,\b,\r)}}
\nc \drep {\ensuremath{(\v^*,\b^*,\ro)}}
\nc \tc {type-C}
\nc{\jac}{\mathrm{Jac}}

\font\cyr=wncyr10

\nc{\redtext}[1]{\textcolor{red}{#1}}
\nc{\li}[1]{\textcolor{blue}{#1}}
\nc{\lir}[1]{\textcolor{blue}{Li: #1}}
\nc{\cm}[1]{\textcolor{red}{Chengming: #1}}
\nc{\ty}[1]{\textcolor{blue}{Tao: #1}}


\title{Another approach to Hom-Lie bialgebras via Manin triples}

\author{Yi Tao}
\address{Chern Institute of Mathematics\& LPMC, Nankai University, Tianjin 300071, China}
    \email{taoyinku@163.com}

\author{Chengming Bai}
\address{Chern Institute of Mathematics \& LPMC, Nankai University, Tianjin 300071, China}
         \email{baicm@nankai.edu.cn}

\author{Li Guo}
\address{Department of Mathematics and Computer Science,
         Rutgers University,
         Newark, NJ 07102}
\email{liguo@rutgers.edu}

\date{\today}

\begin{abstract}                                                                        In this paper, we study Hom-Lie bialgebras
by a new notion of the dual representation of a representation of
a Hom-Lie algebra. Motivated by the essential connection between
Lie bialgebras and Manin triples, we introduce the notion of a Hom-Lie bialgebra with emphasis on its compatibility with
a Manin triple of Hom-Lie algebras associated to a
nondegenerate symmetric bilinear form satisfying a new invariance
condition. With this notion, coboundary Hom-Lie bialgebras can be
studied without a skew-symmetric condition of $r\in\g\otimes \g$,
naturally leading to the classical Hom-Yang-Baxter equation whose
solutions are used to construct coboundary Hom-Lie bialgebras. In
particular, they are used to obtain a canonical Hom-Lie bialgebra
structure on the double space of a Hom-Lie bialgebra. We also
derive solutions of the classical Hom-Yang-Baxter equation from
\o-operators and Hom-left-symmetric algebras.
\end{abstract}


\subjclass[2010]{16T10, 16T25, 17A30, 17B62}

\keywords{Hom-Lie algebra, bialgebra, matched pair, Manin triple, classical Hom-Yang-Baxter
equation, $\mathcal O$-operator}

\maketitle

\tableofcontents

\numberwithin{equation}{section}

\allowdisplaybreaks

\section{Introduction}
The notion of Hom-Lie algebras was introduced in \cite{HLS} in the
context of deformations of the Witt algebra and the Virasoro
algebra. In a Hom-Lie algebra $\g$, the Jacobi identity defining a
Lie algebra is twisted by a linear map $\p$, called the Hom-Jacobi
identity, which recovers the Jacobi identity in the special case
when $\p$ is the identity map. With this generalization of the Lie
algebra, some $q$-deformations of the Witt and the Virasoro
algebras have the structure of a Hom-Lie algebra \cite{HLS}. Due
to their close relationship with discrete and deformed vector
fields and differential calculus~\cite{HLS,LS1,LS2}, Hom-Lie
algebras have been studied in broad areas
\cite{AMM,BM,BEM,CWZ,CS-purely,MDL,MLY,MS1,S,BS,SD,Y1,DY-hlbi}.

While the generalization of Hom-Lie algebra from Lie algebra allows wider potential applications, this generalization has often posed challenges in extending the well-established theory of Lie algebras.

This is the case of the bialgebra theory which is an essential aspect of Lie algebra due to its important applications. For example, the Lie bialgebra is the algebraic structure corresponding to a Poisson-Lie group and the classical structure of a quantized universal enveloping algebra~\mcite{CP,D}.
A Lie bialgebra consists of a Lie algebra $(\g,[\cdot,\cdot]_\g$)
where $[\cdot,\cdot]_\g:\otimes^2\g\to \g$ is a Lie bracket, a Lie
coalgebra $(\g,\Delta)$ where $\Delta:\g\to \otimes^2\g$ is a Lie
comultiplication, and a suitable compatibility condition between
the Lie bracket $[\cdot,\cdot]_\g$ and the Lie comultiplication
$\Delta$.

The great importance of the Lie bialgebra in the related fields
relies on the fact that a Lie bialgebra can be equivalently
described in terms of a Manin triple. More precisely, the
compatibility condition of a Lie bialgebra is given by the
condition that the Lie algebra $\g$ and the Lie algebra $\g^*$
from the Lie coalgebra $(\g,\Delta)$ are
subalgebras of a third Lie algebra $\g\oplus \g^*$ such that, for
the usual pairing $\langle\cdot,\cdot \rangle$ between $\g$ and
$\g^*$, the bilinear form
\begin{equation}\label{eq:liebidb}
\bb( x+u^*, y+v^*):=\langle x, v^*\rangle+\langle u^*,y\rangle,\;\;\forall x,y\in \g, u^*,v^*\in \g^*,
\end{equation}
is invariant on $\g\oplus \g^*$ in the sense that
\begin{equation}
\bb([a,b]_{\g\oplus\g^*}, c)=\bb(a,[b,c]_{\g\oplus\g^*}),\;\;\forall a,b,c\in \g\oplus\g^*.
\label{eq:inv}
\end{equation}

Thus a suitable generalization of the theory of Lie bialgebras to the Hom context should include a meaningful generalization of Manin triples in the picture, which turns out to be a nontrivial requirement.
A notion of Hom-Lie bialgebra was proposed in \cite{DY-hlbi}, where the compatibility condition is that
the Hom-Lie cobracket $\d$ is a 1-cocycle on the Hom-Lie algebras
$\g$ with the coefficients in $\j^2\g$:
\begin{equation}\label{CC1}\d[x,y]\xg=\ad_{\p(x)}\d(y)-\ad_{\p(y)}\d(x),\;\;\forall x,y\in \g.\end{equation}
Due to the lack of a Manin triple interpretation of this
definition, other notions of Hom-Lie bialgebras were subsequently
introduced~\cite{CS-purely,BS}, incorporating Hom variations of the
nondegenerate symmetric bilinear form in Eq.~\eqref{eq:liebidb}.

Different ways of integrating the Hom-map $\p$ into the bilinear form in Eq.~\eqref{eq:liebidb}
give rise to variations in generalizing the invariant nondegenerate
symmetric bilinear forms and the resulting Manin triples to the
Hom case, yielding various notions of Hom-Lie bialgebras.
Thus a Hom-Lie bialgebra is a triple $(\g,[\cdot,\cdot]_\g, \Delta)$ such that $(\g,[\cdot,\cdot]_\g)$ defines a Hom-Lie algebra and $(\g, \Delta)$ defines a Hom-Lie coalgebra (that is, $\Delta^*$ defines a Hom-Lie algebra on $\g^*$ when $\g$ is finite-dimensional) which satisfy certain compatibility conditions depending on the appearance of $\p$.
We compare the approaches in~\cite{BS} and~\cite{CS-purely} in order to motivate the approach taken in this paper.

In \cite{BS}, the authors introduced a notion of Hom-Lie
bialgebras on Hom-Lie algebras which are {\bf admissible} in the sense that the following equations hold:
\begin{eqnarray}
  ~\label{eqn:coadjointrepcon1}[({\rm Id}-\p^2) (x),\p(y)]_\g&=&0,\\
~\label{eqn:coadjointrepcon2}[({\rm Id}-\p^2)(x),[\p(y),z]_\g]_\g&=&[({\rm
Id}-\p^2)(y),[\p(x),z]_\g]_\g,\forall x,y,z\in \g.
\end{eqnarray}
The compatibility condition is still given by Eq.~(\ref{CC1}), but is only
required to hold on a subspace of $\j^2\gs$, that is,
$$\la \d[x,y]\xg,\ps(\xi)\j \e\ra=\la \ad_{\p(x)}\d(y)- \ad_{\p(y)}\d(x),\ps(\xi)\j \e\ra,\;\;\forall x,y\in \g, \xi,\eta\in \g^*.$$
It is verified in~\cite{BS} that this variation of Hom-Lie bialgebras correspond to the Manin triples of Hom-Lie algebras associated to the nondegenerate symmetric bilinear form satisfying
\begin{equation}\label{eq:bb1}
\bb([a,b]_{\g\oplus\g^*},c)=\bb(a,[b,c]_{\g\oplus\g^*}),\
\bb(\phi_{_{\g\oplus\g^*}}(a),b)=\bb(a,\phi_{_{\g\oplus\g^*}}(b)), \;\;\forall a,b,c\in \g\oplus \g^*.
\end{equation}

In \cite{CS-purely}, the authors introduced another notion of
Hom-Lie bialgebras, called purely Hom-Lie bialgebras, defined on Hom-Lie algebras which are regular, that is,
such that $\p$ is invertible. The compatibility condition is
changed to
$$\d[x,y]\xg=\ad_{\p^{-1}(x)}\d(y)-\ad_{\p^{-1}(y)}\d(x),\forall x,y\in \g.$$
Such Hom-Lie bialgebras correspond to the Manin triples of Hom-Lie
algebras associated to the nondegenerate symmetric bilinear form satisfying
\begin{equation}\label{eq:bb2}
\bb([a,b]_{\g\oplus\g^*},\phi_{_{\g\oplus\g^*}}(c))=\bb(\phi_{_{\g\oplus\g^*}}(a),[b,c]_{\g\oplus \g^*}),\
\bb(\phi_{_{\g\oplus\g^*}}(a),\phi_{_{\g\oplus\g^*}}(b))=\bb(a,b),
\end{equation}
for any $ a,b,c\in \g\oplus \g^*$.

As can be noted at this point, there should be a variety of
Hom-Lie bialgebras and it is worthwhile to explore other
possibilities in order to gain further insight on the interesting
role played by the Hom-map $\p$ in Hom-Lie bialgebras.  In this
paper, we consider a new kind of Hom-Lie bialgebras which
correspond to the Manin triples of Hom-Lie algebras associated to
the nondegenerate symmetric bilinear form satisfying the condition
\begin{equation}\label{invariant}
\bb([a,b]_{\g\oplus\g^*},c)=\bb(a,[\phi_{_{\g\oplus\g^*}}(b),c]_{\g\oplus\g^*}),\
\bb(\phi_{_{\g\oplus\g^*}}(a),b)=\bb(a,\phi_{_{\g\oplus\g^*}}(b)),
\end{equation}
for any $a,b,c\in \g\oplus\g^*$. Note that if $\phi_{_{\g\oplus\g^*}}$ is invertible, then Eq.~(\ref{invariant}) is equivalent to
\begin{equation}\bb([a,b]_{\g\oplus\g^*},\phi_{_{\g\oplus\g^*}}(c))=\bb(\phi_{_{\g\oplus\g^*}}(a),[b,c]_{\g\oplus\g^*}),\
\bb(\phi_{_{\g\oplus\g^*}}(a),b)=\bb(a,\phi_{_{\g\oplus\g^*}}(b)),\end{equation}
for any  $a,b,c\in \g\oplus\g^*$, that is, a certain hybrid of
Eqs.~(\ref{eq:bb1}) and (\ref{eq:bb2}).
It is also interesting to note that the
compatible condition is exactly Eq.~(\ref{CC1}), on weakly
involutive Hom-Lie algebras. Thus this new kind of Hom-Lie
bialgebras incorporate features of all the previous three notions
of Hom-Lie bialgebras.

Another complexity in defining Hom-Lie bialgebras is their
dependence on a suitable notion of the dual representation, since
the natural choice $(\v^*,\beta^*,\rs)$ for the dual
representation of a representation $(\v,\beta,\rho)$ of a Hom-Lie
algebra does not work for all Hom-Lie algebras~\cite{BM}. To work
around this difficulty, the authors of~\cite{BS} imposed a
condition on Hom-Lie bialgebras called admissible such that
$(\v^*,\beta^*,\rs)$ is still the dual representation of
$(\v,\beta,\rho)$. Taking a different approach, a notion of dual
representation, under the condition that $\beta$ is invertible,
was introduced in~\cite{CS-purely} in order to
define their Hom-Lie bialgebras. In this paper,  we introduce an
alternative notion of the dual representation  \drep, where
$\ro=\rho^*\,  \p\footnote{The composition sign $\circ $ is suppressed throughout the paper to simplify notations.} :\g\lr\gl(V^*)$, under the
weakly involutive condition $\r(\p^2(x))=\r(x)$. In particular, for the adjoint
representation $(\g,\p,\ad)$, $\ad(\p^2(x))=\ad(x)$ for all $x\in
\g$ gives the notion of a weakly involutive Hom-Lie algebra.

One importance of Lie bialgebras lies with the essential role played by coboundary Lie bialgebras and quasitriangular Lie bialgebras which led to
the introduction of the classical Yang-Baxter equation whose solutions produce such bialgebras.
Another advantage of our approach of Hom-Lie bialgebras is that it works, without any restrictions, with the coboundary Hom-Lie bialgebras and quasitriangular Hom-Lie bialgebras and their connection with the classical Hom-Yang-Baxter equation, while conditions like $r\in\g\otimes \g$ being skew-symmetric needed to be imposed in~\cite{CS-purely,BS}.
In particular, there is a canonical (quasitriangular) Hom-Lie bialgebra structure on the double space of a Hom-Lie bialgebra induced from a non-skew-symmetric solution of the classical Hom-Yang-Baxter equation.

The paper is organized as follows. In Section~\ref{sec:rep}, we
introduce a new dual representation  \drep\,, leading to the basic
notion of weakly involutive Hom-Lie algebra. This weakly
involutive condition is shown to provide the right context for a
nondegenerate symmetric invariant bilinear form on a Hom-Lie
algebra (Proposition~\ref{invariant and weakly involutive}). These
nondegenerate symmetric invariant bilinear forms are applied in
Section~\ref{sec:match} to study matched pairs and Manin triples
of weakly involutive Hom-Lie algebras. With these preparations, we
are naturally led to our notion of a Hom-Lie bialgebra together
with its characterizations by matched pairs and Manin triples
(Theorem~\ref{thm:equiv}). As a further evidence that we have a
suitable notion of Hom-Lie bialgebras, in Section~\ref{sec:quasi},
we study coboundary Hom-Lie bialgebras and quasitriangular Hom-Lie
bialgebras in terms of $r$-matrices (Theorem~\ref{main theorem}),
showing that skew-symmetric solutions of the classical
Hom-Yang-Baxter equation in a weakly involutive Hom-Lie algebra
naturally give rise to Hom-Lie bialgebras. Finally, in
Section~\ref{sec:oop}, we study the operator forms of
the classical Hom-Yang-Baxter equation and then construct
solutions of the classical Hom-Yang-Baxter equation in semidirect
products of weakly involutive Hom-Lie algebras using \o-operators
(Corollary \ref{cor, coboundary Hom-Lie bi, r}) and
Hom-left-symmetric algebras (Corollary \ref{solutions in hls}).
\smallskip

\noindent
\textbf{Notations.} Throughout this paper, $\g$ denotes a Hom-Lie algebra and $V$ denotes a vector space, both assumed to be finite-dimensional. $\gs$ and $V^*$ denote the dual of $\g$ and $V$ respectively. Further $x,y,z,x_i,y_i$ denote elements in $\g$, $a,b,c$ denote elements in $\gs$, $u,v,w$ denote elements in $V$ and $\xi,\e$ denote elements in $V^*$. $\ad$ and $\da$ are the adjoint representations of the Hom-Lie algebras $\g$ and $\gs$ respectively.

\section{Hom-Lie algebras, their representations and bilinear forms}
\label{sec:rep}

In this section, we first recall some basic concepts and facts on
Hom-Lie algebras and their representations~\cite{HLS,S}. We then
introduce a new notion of the dual representation of a representation of a
Hom-Lie algebra. The related bilinear forms are also studied.

\begin{defi}[\cite{HLS}]
    A {\bf Hom-Lie algebra} is a triple $(\g,[\cdot,\cdot]\xg,\p)$ consisting of a linear space $\g$, a skew-symmetric bilinear map (bracket) $\bra:\g\t\g\lr\g$ and a bracket preserving linear map $\p:\g\lr\g$ satisfying the {\bf Hom-Jacobi identity}
    \begin{equation}
    [\p(x),[y,z]\xg]\xg+[\p(y),[z,x]\xg]\xg+[\p(z),[x,y]\xg]\xg=0.
    \end{equation}
\end{defi}

A Hom-Lie algebra $\hlg$ is called {\bf involutive} if $\p$ satisfies $\p^2=\id$.

\begin{defi}
A homomorphism of Hom-Lie algebras $f:\hlg\lr\hlk$ is a linear map
$f:\g\lr\k$ such that
\begin{eqnarray}
f([x,y]\xg)&=& [f(x),f(y)]_\k, \forall x, y\in \g, \\
f\,  \p &=& \phi_{_\k}\,  f.
\end{eqnarray}
\end{defi}

The notion of a representation of a Hom-Lie algebra
$\hlg$ was introduced in~\cite{S}.
\begin{defi}
    A {\bf representation} of a Hom-Lie algebra $\hlg$ is a triple $(\v,\b,\r)$ consisting of a vector space $V$, an element $\b\in\gl(\v)$ and a linear map $\r:\g\lr\gl(\v)$, such that for any $x, y\in\g$, the following equalities hold:
    \begin{itemize}
        \item[(i)] $\r(\p(x))\, \b=\b\, \r(x)$;
        \item[(ii)] $\r([x,y]\xg)\, \b=\r(\p(x))\, \r(y)-\r(\p(y))\, \r(x)$.
    \end{itemize}
\end{defi}

It is straightforward to
see that $(\g,\p,\ad)$, where $\ad_x (y):=[x,y]$ for all $x,y\in \g$, is a representation of $\hlg$, called the {\bf adjoint
representation}.

As in the Lie algebra case, given a representation \rep\  of a Hom-Lie algebra $\hlg$, we obtain a new Hom-Lie algebra $\g\ltimes_\r
\v:=(\g\oplus\v,[\cdot,\cdot]_{\g\ltimes_\r \v},\p\oplus\b)$ as the
semidirect product:
\begin{equation*}
\begin{split}
[(x,u),(y,v)]_{\g\ltimes_\r \v}  &:= ([x,y]\xg,\r(x)(v)-\r(y)(u)), \\
(\p\oplus\b)(x,u) &:=  (\p(x),\b(u)).
\end{split}
\end{equation*}

\begin{defi}
   Let  $\hlg$  be a  Hom-Lie algebra. Two representations
   $(\v,\b,\r)$ and $(\v',\b',\r')$ are called {\bf equivalent} if
   there exists a linear isomorphism $\varphi:V\rightarrow V'$
   such that
\begin{equation}
\varphi\rho(x)=\rho'(x)\varphi,\;\;\beta'\varphi(x)=\varphi\beta(x),\forall
x\in \g.
\end{equation}
\end{defi}

Let $V_1$ and $V_2$ be two vector spaces. For a linear map
$\phi:V_1\longrightarrow V_2$, we define the map
$\phi^*:V_2^*\longrightarrow V_1^*$ by
\begin{equation}\label{eq:dual1}
\la\phi^*(\xi),v\ra=\la \xi,\phi(v)\ra, \forall\ v\in V_1,\xi\in
V_2^*.
\end{equation}

Given a representation \rep\  of a Hom-Lie algebra $\hlg$, define
$\rs:\g\lr\gl(\v^*)$ by
\begin{equation}
\la\rs(x)(\xi),v\ra=-\la \xi,\r(x)v\ra,\ \forall\
x\in\g,v\in\v,\xi\in\v^*.
\end{equation}
As noted in~\cite{BM}, in general $(\v^*,\beta^*,\rs)$ is not a representation of $\g$ on $\v^*$ with respect to $\beta^*$.

We now introduce a Hom-variation of dual representation.
Let  $\hlg$  be a  Hom-Lie algebra and let  $(\v,\b,\r)$ be a representation of $\hlg$.
Define
\begin{equation}
\ro:=\rs\,  \p: \g\lr\gl(\v^*).
\label{eq:ro}
\end{equation}
In other words,

\begin{equation}
\la\ro(x)(\xi),v\ra=-\la \xi,\r(\p(x))v\ra,\ \forall\
x\in\g,v\in\v,\xi\in\v^*.
\end{equation}
In general, $(\v^*,\b^*,\ro)$ is not a representation of $\g$. However, by the definition of a representation of $\g$, we have

\begin{lem}
    Let \hlg\ be a Hom-Lie algebra and \rep\ be a representation. Then \drep\ is
    a representation if and only if the following equations hold:
    \begin{itemize}
        \item[(i)]  $\b\, \r(x)=\b\, \r(\p^2(x))$;
        \item[(ii)]
        $\r(\p^2[x,y]\xg)\, \b=\r(\p(x))\, \r(\p^2(y))-\r(\p(y))\, \r(\p^2(x)),\;\;\forall
        x,y\in\g$.
    \end{itemize}
In this case, \drep\ is called the {\bf Hom-dual representation} of \rep.
\end{lem}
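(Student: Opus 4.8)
The plan is to translate the two defining axioms of a representation for $\drep$ into dual statements about $\rep$, exploiting that transposition reverses the order of composition. First I would record the explicit shapes of the structure maps. By the defining relation of $\rs$ together with Eq.~\eqref{eq:dual1}, the operator $\rs(x)$ is the negative transpose of $\r(x)$, so that $\ro(x)=\rs(\p(x))=-\r(\p(x))^*$, while the twist on $\v^*$ is $\b^*$. I would also note that the map $A\mapsto A^*$ from $\gl(\v)$ to $\gl(\v^*)$ is injective and satisfies $(AB)^*=B^*A^*$, so that any identity among composites of transposes can be pulled back to an identity among the original operators on $\v$.

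Next I would write out the two conditions that $\drep$ must satisfy to be a representation, namely $\ro(\p(x))\,\b^*=\b^*\,\ro(x)$ and $\ro([x,y]\xg)\,\b^*=\ro(\p(x))\,\ro(y)-\ro(\p(y))\,\ro(x)$, and substitute $\ro(x)=-\r(\p(x))^*$ into each. For the first condition this gives $\r(\p^2(x))^*\,\b^*=\b^*\,\r(\p(x))^*$; applying $(AB)^*=B^*A^*$ and the injectivity of transposition turns it into $\b\,\r(\p^2(x))=\r(\p(x))\,\b$. Since axiom (i) of the representation $\rep$ gives $\r(\p(x))\,\b=\b\,\r(x)$ unconditionally, the right-hand side equals $\b\,\r(x)$, and the first condition is seen to be equivalent exactly to condition (i) of the lemma.

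For the second condition, the two sign changes coming from $\ro=-\r(\p(\cdot))^*$ cancel on the right, and anti-multiplicativity rewrites $\ro(\p(x))\,\ro(y)-\ro(\p(y))\,\ro(x)$ as the transpose of $\r(\p(y))\,\r(\p^2(x))-\r(\p(x))\,\r(\p^2(y))$, whereas the left-hand side $\ro([x,y]\xg)\,\b^*$ becomes the transpose of $-\b\,\r(\p[x,y]\xg)$. Peeling off the transpose and clearing the overall sign yields $\b\,\r(\p[x,y]\xg)=\r(\p(x))\,\r(\p^2(y))-\r(\p(y))\,\r(\p^2(x))$. Finally, applying axiom (i) of $\rep$ to the element $\p[x,y]\xg$ gives $\r(\p^2[x,y]\xg)\,\b=\b\,\r(\p[x,y]\xg)$, and substituting this on the left converts the identity precisely into condition (ii) of the lemma.

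The argument is largely bookkeeping, and I expect no genuine obstacle. The two points that must be handled with care are the order reversal $(AB)^*=B^*A^*$ combined with the sign in $\rs=-\r(\cdot)^*$ (which produces the single overall sign flip in the second condition), and the observation that axiom (i) of the \emph{original} representation $\rep$ must be used once in each part to absorb a stray factor of $\b$ --- to replace $\r(\p(x))\,\b$ by $\b\,\r(x)$ in the first and $\b\,\r(\p[x,y]\xg)$ by $\r(\p^2[x,y]\xg)\,\b$ in the second. Notably, axiom (ii) of $\rep$ is never needed.
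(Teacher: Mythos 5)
Your proposal is correct: the paper itself offers no written proof for this lemma (it is stated as following immediately ``by the definition of a representation''), and your argument is exactly that routine verification, dualizing the two representation axioms for $(\v^*,\b^*,\ro)$ through the transpose and using axiom (i) of the original representation $\rep$ once in each part. The sign bookkeeping and the observation that axiom (ii) of $\rep$ is never invoked are both accurate.
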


As can be expected for a dual representation, $\ro$ should satisfy
$(\ro)^\circ=\r$, that is,  $\r(\p^2(x))=\r(x)$ for all $x\in \g$.
Moreover, the following obvious result further highlights the relevance of the latter condition to representations.

\begin{lem}
Let $\hlg$ be a Hom-Lie algebra and \rep\ be a representation.
\begin{enumerate}
\item If $\r(\p^2(x))=\r(x)$ for any $x\in\g$, then \drep\ is
    a representation of $\g$.
\item
If $\b$ is invertible and \drep\ is a representation of $\g$, then we have $\r(\p^2(x))=\r(x)$.
\end{enumerate}
 \label{lem:wi}
\end{lem}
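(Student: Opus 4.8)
The plan is to verify each direction directly against the two conditions characterizing when $\drep$ is a representation, namely conditions (i) and (ii) of the preceding lemma, which read $\b\,\r(x)=\b\,\r(\p^2(x))$ and $\r(\p^2[x,y]\xg)\,\b=\r(\p(x))\,\r(\p^2(y))-\r(\p(y))\,\r(\p^2(x))$. These conditions are the sole criterion, so both parts reduce to showing that the weakly involutive hypothesis $\r(\p^2(x))=\r(x)$ forces them, and conversely that invertibility of $\b$ lets us extract this hypothesis back out.

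For part (1), I would substitute $\r(\p^2(x))=\r(x)$ into both conditions. Condition (i) becomes immediate, since $\b\,\r(x)=\b\,\r(\p^2(x))$ holds trivially once $\r(\p^2(x))=\r(x)$. For condition (ii), the plan is to rewrite $\r(\p^2[x,y]\xg)$ using the hypothesis applied to the element $[x,y]\xg$, giving $\r([x,y]\xg)$, and likewise replace $\r(\p^2(y))$ by $\r(y)$ and $\r(\p^2(x))$ by $\r(x)$ on the right-hand side. The right side then reads $\r(\p(x))\,\r(y)-\r(\p(y))\,\r(x)$, while the left side becomes $\r([x,y]\xg)\,\b$; these are equal precisely by axiom (ii) in the definition of a representation of $\hlg$. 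So part (1) is a straightforward rewriting that bottoms out in the original representation axioms.

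For part (2), I would assume $\b$ is invertible and that $\drep$ is a representation, so conditions (i) and (ii) both hold. Condition (i) gives $\b\,\r(x)=\b\,\r(\p^2(x))$ for all $x$; applying $\b^{-1}$ on the left yields $\r(x)=\r(\p^2(x))$, which is exactly the weakly involutive identity. This is the whole content of part (2)—condition (ii) is not even needed, and invertibility of $\b$ is used only to cancel it.

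The computations here are entirely routine substitutions, so I do not anticipate a genuine obstacle; the only point requiring care is bookkeeping, namely remembering that the weakly involutive identity must be applied to $[x,y]\xg$ as well as to $x$ and $y$ separately when handling condition (ii) in part (1). The mild conceptual subtlety worth flagging is that part (2) shows invertibility of $\b$ alone suffices to recover the weakly involutive condition, underscoring that $\r(\p^2(x))=\r(x)$ is not merely sufficient but essentially necessary whenever $\b$ is well-behaved, which is why it serves as the natural hypothesis throughout the rest of the paper.
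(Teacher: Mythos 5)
Your proof is correct and follows exactly the route the paper intends: the paper states this lemma as an immediate consequence of the preceding characterization of when \drep\ is a representation (conditions (i) and (ii)), and your substitutions---including applying the weakly involutive identity to $[x,y]\xg$ as well as to $x$ and $y$ in condition (ii), and left-cancelling $\b$ for part (2)---are precisely the routine verification the paper omits.
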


Motivated by Lemma~\ref{lem:wi}, a representation \rep\ is called {\bf weakly involutive} if $\r(\p^2(x))=\r(x)$. By the definitions of $\ro$ and $\rho^*$ we obtain

\begin{lem}\label{dual rep, weakly}
Let $\hlg$ be a Hom-Lie algebra and \rep\ be a weakly involutive
representation. Then the Hom-dual representation \drep\ is also a weakly involutive representation.
\end{lem}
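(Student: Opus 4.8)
The plan is to verify that the Hom-dual representation $\drep$ is again weakly involutive, which by definition means checking that $\ro(\p^2(x)) = \ro(x)$ for all $x \in \g$. The key is to unwind the definition $\ro = \rs\,\p$ from Eq.~\eqref{eq:ro} and exploit the hypothesis that $\rep$ is weakly involutive, i.e.\ $\r(\p^2(x)) = \r(x)$. First I would note that, by Lemma~\ref{lem:wi}(a), the weak involutivity of $\rep$ already guarantees that $\drep$ is a genuine representation of $\g$, so only the weak involutivity of $\drep$ remains to be established.

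The main computation is a direct one at the level of the pairing. For any $x \in \g$, $v \in \v$ and $\xi \in \v^*$, I would compute
\begin{equation*}
\la \ro(\p^2(x))(\xi), v\ra = -\la \xi, \r(\p(\p^2(x)))v\ra = -\la\xi, \r(\p^3(x))v\ra,
\end{equation*}
and compare it with
\begin{equation*}
\la \ro(x)(\xi), v\ra = -\la \xi, \r(\p(x))v\ra.
\end{equation*}
Thus the claim $\ro(\p^2(x)) = \ro(x)$ reduces to the identity $\r(\p^3(x)) = \r(\p(x))$ for all $x \in \g$. This last identity follows immediately from the weak involutivity hypothesis $\r(\p^2(y)) = \r(y)$ applied to the element $y = \p(x)$, since then $\r(\p^2(\p(x))) = \r(\p(x))$, and $\p^2(\p(x)) = \p^3(x)$.

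Since the pairing $\la\cdot,\cdot\ra$ between $\v^*$ and $\v$ is nondegenerate, the equality of $\la \ro(\p^2(x))(\xi), v\ra$ and $\la \ro(x)(\xi), v\ra$ for all $v \in \v$ forces $\ro(\p^2(x))(\xi) = \ro(x)(\xi)$ for every $\xi \in \v^*$, and hence $\ro(\p^2(x)) = \ro(x)$. This is precisely the statement that $\drep$ is weakly involutive. I do not anticipate any real obstacle here: the whole argument is a one-line substitution once the definitions are expanded, and indeed the paper signals this by calling the result ``obvious'' and preceding it with the phrase ``By the definitions of $\ro$ and $\rho^*$.'' The only point requiring a modicum of care is keeping track of the powers of $\p$ — recognizing that applying the hypothesis to the shifted argument $\p(x)$ is what converts $\p^3$ back to $\p$ — but this is routine rather than genuinely difficult.
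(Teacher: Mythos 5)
Your proof is correct and takes the same route the paper intends: the paper gives no separate proof, presenting the lemma as an immediate consequence of the definitions of $\ro$ and $\rho^*$, and your computation is exactly that unwinding — reducing $\ro(\p^2(x))=\ro(x)$ to $\r(\p^3(x))=\r(\p(x))$, which follows from weak involutivity of \rep\ applied at $\p(x)$. You also correctly invoke Lemma~\ref{lem:wi} to ensure \drep\ is a representation in the first place, so nothing is missing.
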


Focusing on the adjoint representation, we give
\begin{defi}
    A Hom-Lie algebra \hlg\ is called {\bf weakly involutive} if the adjoint representation $(\g,\p,\ad)$ is weakly involutive, more precisely,  $[\p^2(x),y]\xg=[x,y]\xg$ for all $x,y\in\g$.
\end{defi}

\begin{cor}\label{cor add}
Let \hlg\ be a weakly involutive Hom-Lie algebra. Then we have
    \begin{equation*}
    (\adx^*{\ps}^2)( a)=\adx^*(a),\;\;\forall x\in \g, a\in \g^*.
    \end{equation*}
    In particular, we have  $ (\adox{\ps}^2)( a)=\adox (a)$.
\end{cor}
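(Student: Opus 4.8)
The plan is to prove Corollary~\ref{cor add} directly from the weakly involutive condition $[\p^2(x),y]\xg=[x,y]\xg$ by dualizing, rather than invoking the general apparatus of Hom-dual representations. The target is the identity $(\adx^*{\ps}^2)(a)=\adx^*(a)$ for all $x\in\g$ and $a\in\gs$, so the natural move is to pair both sides against an arbitrary $y\in\g$ and reduce everything to the known bracket identity.

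First I would unwind the definitions. By Eq.~\eqref{eq:dual1} applied to $\p$ we have $\la\ps(a),y\ra=\la a,\p(y)\ra$, so ${\ps}^2(a)$ pairs against $y$ to give $\la a,\p^2(y)\ra$. For the coadjoint action, the definition of $\rho^*$ specialized to the adjoint representation gives $\la\adx^*(b),y\ra=-\la b,\adx(y)\ra=-\la b,[x,y]\xg\ra$ for any $b\in\gs$. Combining these, the left-hand side pairs as
\begin{equation*}
\la(\adx^*{\ps}^2)(a),y\ra=-\la{\ps}^2(a),[x,y]\xg\ra=-\la a,\p^2([x,y]\xg)\ra.
\end{equation*}
Since $\p$ preserves the bracket, $\p^2([x,y]\xg)=[\p^2(x),\p^2(y)]\xg$. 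The right-hand side pairs simply as $\la\adx^*(a),y\ra=-\la a,[x,y]\xg\ra$.

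The crux is then to show $\la a,[\p^2(x),\p^2(y)]\xg\ra=\la a,[x,y]\xg\ra$ for all $a,x,y$, which is equivalent to $[\p^2(x),\p^2(y)]\xg=[x,y]\xg$ since the pairing is nondegenerate. This is where the weakly involutive hypothesis does the work: applying $[\p^2(u),v]\xg=[u,v]\xg$ once with $u=x,v=\p^2(y)$ gives $[\p^2(x),\p^2(y)]\xg=[x,\p^2(y)]\xg$, and then using skew-symmetry together with a second application collapses $[x,\p^2(y)]\xg=-[\p^2(y),x]\xg=-[y,x]\xg=[x,y]\xg$. I expect this double application of the defining identity to be the only genuinely substantive step, and it is short; no obstacle of real difficulty arises. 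For the final assertion about $\adox$, I would recall that $\ado=\rs\,\p=\adx^*\ps$ by Eq.~\eqref{eq:ro} specialized to the adjoint representation, so $(\adox{\ps}^2)(a)=\adx^*\ps{\ps}^2(a)=\adx^*{\ps}^3(a)$, and the already-established relation $\adx^*{\ps}^2=\adx^*$ applied with an extra $\ps$ yields $\adx^*{\ps}^3=\adx^*\ps=\adox$, giving the claim at once.
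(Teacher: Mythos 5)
Your proof of the main identity $(\adx^*{\ps}^2)(a)=\adx^*(a)$ is correct and is essentially the paper's own argument: the paper also pairs against $y\in\g$ and writes $\la (\adx^*{\ps}^2)(a),y\ra=-\la a,\p^2[x,y]\xg\ra=-\la a,[x,y]\xg\ra=\la \adx^*(a),y\ra$, leaving implicit exactly the points you spell out, namely that $\p^2[x,y]\xg=[x,y]\xg$ follows from the bracket-preservation of $\p$ together with two applications of the weakly involutive identity (one of them routed through skew-symmetry). So for that part there is nothing to object to.

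The derivation of the ``in particular'' clause, however, rests on a genuine error. You read Eq.~\eqref{eq:ro} as saying $\adox=\adx^*\,\ps$, i.e., the operator $\adx^*$ on $\gs$ post-composed with $\ps$. But in Eq.~\eqref{eq:ro} the composition $\rs\,\p$ is the composition $\g\xrightarrow{\p}\g\xrightarrow{\rs}\gl(\v^*)$ (note the displayed type $\rs\,\p:\g\lr\gl(\v^*)$ and the clarifying formula $\la\ro(x)(\xi),v\ra=-\la\xi,\r(\p(x))v\ra$): specialized to the adjoint representation it gives $\adox=\ads_{\p(x)}$, not $\adx^*\,\ps$. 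These two operators are genuinely different, since $\la\adox(a),y\ra=-\la a,[\p(x),y]\xg\ra$ while $\la(\adx^*\,\ps)(a),y\ra=-\la a,\p[x,y]\xg\ra=-\la a,[\p(x),\p(y)]\xg\ra$, and weak involutivity only removes a $\p^2$ from an argument of the bracket, never a single $\p$. A concrete counterexample: in the Heisenberg algebra $[e_1,e_2]\xg=e_3$ with $\p(e_1)=-e_1$, $\p(e_2)=-e_2$, $\p(e_3)=e_3$ (an involutive, hence weakly involutive, Hom-Lie algebra), one has $[\p(e_1),e_2]\xg=-e_3$ but $[\p(e_1),\p(e_2)]\xg=e_3$, so $\ado_{e_1}\ne\ad_{e_1}^*\,\ps$. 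Consequently your chain $(\adox{\ps}^2)(a)=\adx^*{\ps}^3(a)=\adx^*\ps(a)=\adox(a)$ is built on a false identification, even though its endpoints state a true fact. The repair is immediate and is surely what the paper intends by ``in particular'': since $\adox=\ads_{\p(x)}$, simply apply the already-proven identity with $\p(x)$ in place of $x$.
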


\begin{proof}
    Since $\g$ is weakly involutive, we have $[\p^2(x),y]\xg=[x,y]\xg$. Then we have
    \begin{equation*}
    \begin{split}
    \la (\adx^*{\ps}^2)(  a),y\ra=-\la a,\p^2[x,y]\xg\ra=-\la a,[x,y]\xg\ra=\la \adx^* (a),y\ra.
    \end{split}
    \end{equation*}
    Therefore the conclusion holds.
\end{proof}

It is straightforward to get the following conclusion.

\begin{pro}\label{semidirect, weakly}
    Let $\hlg$ be a Hom-Lie algebra and \rep\ be a representation. Then the Hom-Lie algebra $\g\ltimes_\r
    \v$ is weakly involutive if and only if
    \begin{itemize}
        \item[(i)]  $\hlg$ is weakly involutive;
        \item[(ii)] \rep\ is weakly involutive;
        \item[(iii)] $\rho(x)\,  \b^2=\rho(x)$ for all $x\in\g$.
    \end{itemize}
\end{pro}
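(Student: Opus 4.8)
The plan is to expand the weak involutivity condition for the semidirect product $\g\ltimes_\r\v$ directly and then split the resulting identity into its $\g$- and $\v$-components. By the definition of weak involutivity applied to the Hom-Lie algebra $(\g\oplus\v,[\cdot,\cdot]_{\g\ltimes_\r \v},\p\oplus\b)$, the algebra $\g\ltimes_\r\v$ is weakly involutive if and only if
\[
[(\p\oplus\b)^2(x,u),(y,v)]_{\g\ltimes_\r \v}=[(x,u),(y,v)]_{\g\ltimes_\r \v}
\]
for all $x,y\in\g$ and $u,v\in\v$. First I would use $(\p\oplus\b)^2(x,u)=(\p^2(x),\b^2(u))$ together with the bracket formula of the semidirect product to rewrite the left-hand side as $([\p^2(x),y]\xg,\ \r(\p^2(x))(v)-\r(y)(\b^2(u)))$, while the right-hand side is $([x,y]\xg,\ \r(x)(v)-\r(y)(u))$.

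Comparing the two sides componentwise, the equality of the $\g$-components reads $[\p^2(x),y]\xg=[x,y]\xg$ for all $x,y$, which is exactly condition (i) that $\hlg$ is weakly involutive. The equality of the $\v$-components is the single identity $\r(\p^2(x))(v)-\r(y)(\b^2(u))=\r(x)(v)-\r(y)(u)$, required for all $x,y,u,v$. The key step is to decouple this into the two remaining conditions: specializing $u=0$ yields $\r(\p^2(x))=\r(x)$, i.e. condition (ii) that \rep\ is weakly involutive, and specializing $v=0$ yields $\r(y)\b^2=\r(y)$, i.e. condition (iii). Conversely, if (i), (ii) and (iii) all hold, substituting them back makes both components agree, so the displayed identity holds and $\g\ltimes_\r\v$ is weakly involutive.

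This argument involves no genuine obstacle, being a routine componentwise computation. The only point meriting a moment of care is the decoupling in the necessity direction: one should observe that the $\v$-component identity is linear in $u$ and in $v$ separately, so the terms depending on $x,v$ and those depending on $y,u$ must each vanish on their own, which is precisely what the specializations $u=0$ and $v=0$ extract. This legitimizes splitting the one vector identity into the two independent conditions (ii) and (iii).
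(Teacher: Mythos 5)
Your proof is correct and is precisely the routine componentwise verification the paper has in mind when it states this proposition without proof ("It is straightforward to get the following conclusion"). The decoupling of the $\v$-component identity via the specializations $u=0$ and $v=0$ is valid and handles the only point needing care.
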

\begin{cor}\label{co:semidirect}
Let $\hlg$ be a Hom-Lie algebra and \rep\ be a weakly involutive representation. Then the Hom-Lie algebra $\g\ltimes_{\ro}
\v^*$ is weakly involutive if and only if
\begin{itemize}
    \item[(i)]  $\hlg$ is weakly involutive;
    \item[(ii)] $\rho(\p(x))\,  \b^2=\rho(\p(x))$ for all $x\in\g$.
\end{itemize}
\end{cor}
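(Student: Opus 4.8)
The plan is to deduce this corollary from Proposition~\ref{semidirect, weakly} applied to the Hom-dual representation \drep, and then simplify the three resulting conditions. First I would observe that since \rep\ is weakly involutive, Lemma~\ref{lem:wi}(1) guarantees that \drep\ is a genuine representation of $\g$, so the semidirect product $\g\ltimes_{\ro}\v^*$ is defined and Proposition~\ref{semidirect, weakly} applies to it verbatim, with $\b^*$ and $\ro$ now playing the roles of $\b$ and $\r$. That proposition then tells us that $\g\ltimes_{\ro}\v^*$ is weakly involutive if and only if (a) $\g$ is weakly involutive, (b) \drep\ is weakly involutive, and (c) $\ro(x)(\b^*)^2=\ro(x)$ for all $x\in\g$.

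Next I would dispose of (a) and (b). Condition (a) is precisely condition (i) of the corollary, so nothing is needed there. Condition (b) is automatic: by Lemma~\ref{dual rep, weakly} the Hom-dual of a weakly involutive representation is again weakly involutive, so (b) may simply be dropped. It therefore remains only to show that (c) is equivalent to condition (ii). To analyze (c) I would unwind the pairing: using $\la\ro(x)(\xi),v\ra=-\la\xi,\r(\p(x))v\ra$ together with $(\b^*)^2=(\b^2)^*$, a direct computation gives $\la\ro(x)(\b^*)^2(\xi),v\ra=-\la\xi,\b^2\r(\p(x))v\ra$. Comparing with $\la\ro(x)(\xi),v\ra=-\la\xi,\r(\p(x))v\ra$, condition (c) is equivalent to $\b^2\r(\p(x))=\r(\p(x))$ for all $x\in\g$, with $\b^2$ appearing on the \emph{left}.

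The one genuine point to check, and the main (indeed only) obstacle, is that this left-hand identity matches condition (ii), in which $\b^2$ sits on the right. For this I would prove that $\b^2$ commutes with $\r(\p(x))$. Starting from the representation axiom $\r(\p(x))\b=\b\r(x)$, replacing $x$ by $\p(x)$ and invoking weak involutivity $\r(\p^2(x))=\r(x)$ yields $\b\r(\p(x))=\r(x)\b$; combining the two identities gives $\b^2\r(\p(x))=\b\r(x)\b=\r(\p(x))\b^2$. Hence $\b^2\r(\p(x))=\r(\p(x))$ holds if and only if $\r(\p(x))\b^2=\r(\p(x))$, which is exactly condition (ii). This completes the equivalence, so everything reduces to the left/right bookkeeping of $\b^2$ under dualization, with the rest being a straightforward application of the two preceding lemmas and the proposition.
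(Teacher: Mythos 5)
Your proof is correct and follows essentially the same route as the paper: apply Proposition~\ref{semidirect, weakly} to the Hom-dual representation $(\v^*,\b^*,\ro)$, dispose of its second condition via Lemma~\ref{dual rep, weakly}, and identify its third condition with (ii). The only difference is that the paper simply asserts the equivalence $\ro(x)\,(\b^*)^2=\ro(x)\Leftrightarrow\r(\p(x))\,\b^2=\r(\p(x))$ without comment, whereas you correctly observe that dualization places $\b^2$ on the \emph{left} of $\r(\p(x))$ and supply the missing commutation argument $\b^2\,\r(\p(x))=\r(\p(x))\,\b^2$ (from the representation axiom together with weak involutivity of \rep) --- a worthwhile detail that the paper leaves implicit.
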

\begin{proof}
Since \rep\ is weakly involutive,  \drep\ is weakly involutive by Lemma~\ref{dual rep, weakly}. Moreover $\ro(x)\,  (\b^*)^2=\ro(x)$ for all $x\in\g$ if
and only if  $\rho(\p(x))\,  \b^2=\rho(\p(x))$ for all $x\in\g$. Hence the conclusion holds by Proposition \ref{semidirect, weakly}.
\end{proof}

\begin{cor}
Let $\hlg$ be a Hom-Lie algebra, \rep\ be a weakly involutive representation and $\g\ltimes_\r \v$ be a weakly involutive Hom-Lie algebra. Then the Hom-Lie algebra $\g\ltimes_{\ro} \v^*$ is weakly involutive.
\end{cor}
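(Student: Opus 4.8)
The plan is to reduce the statement to the two structural results established immediately above, namely Proposition~\ref{semidirect, weakly}, which characterizes when $\g\ltimes_\r\v$ is weakly involutive, and Corollary~\ref{co:semidirect}, which characterizes when $\g\ltimes_{\ro}\v^*$ is weakly involutive. Since both hypotheses and conclusion are already expressed in terms of weak involutivity of semidirect products, the whole proof amounts to matching the conditions supplied by the first result against the conditions demanded by the second.

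First I would unpack the hypothesis that $\g\ltimes_\r\v$ is weakly involutive. By Proposition~\ref{semidirect, weakly}, this is equivalent to the three conditions holding simultaneously: (i) $\hlg$ is weakly involutive; (ii) $\rep$ is weakly involutive; and (iii) $\r(x)\,\b^2=\r(x)$ for all $x\in\g$. In particular, condition (i) and the identity in (iii) are now at our disposal.

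Next I would invoke Corollary~\ref{co:semidirect}. Because $\rep$ is a weakly involutive representation (this is a standing hypothesis of the statement, and is in any case redelivered by condition (ii) above), that corollary asserts that $\g\ltimes_{\ro}\v^*$ is weakly involutive precisely when (i) $\hlg$ is weakly involutive and (ii) $\r(\p(x))\,\b^2=\r(\p(x))$ for all $x\in\g$. The first of these is already in hand. For the second, the only substantive observation is that the identity $\r(x)\,\b^2=\r(x)$ obtained above is universally quantified over $x\in\g$, so it remains valid upon replacing $x$ by $\p(x)$, yielding exactly $\r(\p(x))\,\b^2=\r(\p(x))$. Both hypotheses of Corollary~\ref{co:semidirect} are thereby verified, and the conclusion follows.

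The main obstacle, insofar as there is one, is simply recognizing that the condition produced by Proposition~\ref{semidirect, weakly} is stronger than — and instantly specializes to — the condition required by Corollary~\ref{co:semidirect}: the former quantifies $\r(x)\,\b^2=\r(x)$ over all of $\g$, while the latter needs it only on the image $\p(\g)$. Thus the result is an immediate consequence of chaining the two preceding statements, with no separate computation required.
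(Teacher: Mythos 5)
Your proof is correct and is exactly the argument the paper intends: the corollary is stated there without proof precisely because it follows by chaining Proposition~\ref{semidirect, weakly} (to extract weak involutivity of $\hlg$ and the identity $\r(x)\,\b^2=\r(x)$ from the hypothesis on $\g\ltimes_\r\v$) with Corollary~\ref{co:semidirect} (whose condition $\r(\p(x))\,\b^2=\r(\p(x))$ is the specialization of that identity to $\p(x)\in\g$). Nothing is missing.
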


\begin{defi}
    A bilinear form $\bb(\cdot,\cdot)$ on a Hom-Lie algebra \hlg\ is called {\bf invariant} if
    \begin{equation*}
    \bb([x,y]\xg,z)= \bb(x,[\p(y),z]\xg),\ \ \bb(\p(x),y)= \bb(x,\p(y)),\ \ \forall\ x,y,z\in\g.
    \end{equation*}
\end{defi}

\begin{rem}
If $\p$ is invertible, then the above invariant conditions are equivalent
to
$$\bb([x,y]\xg,\p(z))=\bb(\p(x),[y,z]\xg),\ \bb(\p(x),y)=\bb(x,\p(y)).$$
\end{rem}
The following proposition gives the close relationship between weakly involutive condition on a Hom-Lie algebra and the existence of a nondegenerate invariant bilinear form on the Hom-Lie algebra.

\begin{pro}\label{invariant and weakly involutive}
    Let \hlg\ be a Hom-Lie algebra with a nondegenerate symmetric invariant bilinear form $\bb(\cdot,\cdot)$. Then $\g$ is
    a weakly involutive Hom-Lie algebra,  and $(\g,\p,\ad)$ and
$(\g^*,\p^*,\ado)$ are equivalent
representations of \hlg\,. Conversely, if \hlg\
is a weakly involutive Hom-Lie algebra,  and $(\g,\p,\ad)$ and
$(\g^*,\p^*,\ado)$ are equivalent representations of  \hlg\,, then there exists a
nondegenerate invariant bilinear form $\bb(\cdot,\cdot)$ on \hlg\,.
\end{pro}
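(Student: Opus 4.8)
The plan is to encode everything through the linear map $\varphi:\g\lr\gs$ given by $\la\varphi(x),y\ra=\bb(x,y)$, the standard correspondence between bilinear forms and maps into the dual; nondegeneracy of $\bb$ makes $\varphi$ an isomorphism since $\g$ is finite-dimensional. The organizing observation is that the two invariance axioms for $\bb$ translate precisely into the two conditions defining an equivalence of $(\g,\p,\ad)$ with $(\gs,\ps,\ado)$ via $\varphi$. Indeed $\bb(\p(x),y)=\bb(x,\p(y))$ is equivalent to $\ps\varphi=\varphi\p$, and, using the defining formula $\la\ado(x)(a),z\ra=-\la a,[\p(x),z]\xg\ra$ together with skew-symmetry of the bracket, $\bb([x,y]\xg,z)=\bb(x,[\p(y),z]\xg)$ is equivalent to $\varphi\,\ad_x=\ado(x)\,\varphi$. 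So both directions come down to this dictionary, provided $(\gs,\ps,\ado)$ is genuinely a representation, which by Lemma~\ref{lem:wi} is ensured by weak involutivity of $\g$.

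For the forward direction, the first and main task is to deduce weak involutivity of $\g$ from the existence of $\bb$, and I expect this to be the principal obstacle, since the single-twist identity $[\p^2(x),y]\xg=[x,y]\xg$ does not fall out of the axioms in one stroke. My plan is a two-step bootstrap. First I would establish the symmetric double-twist identity $[x,y]\xg=[\p^2(x),\p^2(y)]\xg$ by applying the first invariance axiom cyclically three times, interleaved with symmetry of $\bb$, rewriting $\bb([x,y]\xg,z)$ successively as $\bb(x,[\p(y),z]\xg)$, then $\bb(\p(y),[\p(z),x]\xg)$, then $\bb(\p(z),[\p(x),\p(y)]\xg)=\bb(\p(z),\p([x,y]\xg))$; one application of the second axiom gives $\bb(z,\p^2([x,y]\xg))$, and nondegeneracy yields the double-twist identity. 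Second, I would upgrade this to the single-twist identity: starting from $\bb([\p^2(x),y]\xg,z)$, apply the first axiom and then the second axiom twice to move $\p^2$ onto the last slot, obtaining $\bb(x,[\p^3(y),\p^2(z)]\xg)$; the already-proved double-twist identity collapses $[\p^3(y),\p^2(z)]\xg$ to $[\p(y),z]\xg$, and running the first axiom backwards returns $\bb([x,y]\xg,z)$, whence weak involutivity follows by nondegeneracy.

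With weak involutivity in hand, Lemma~\ref{lem:wi} guarantees that $(\gs,\ps,\ado)$ is a representation, and the dictionary of the first paragraph shows $\varphi$ is an equivalence, finishing the forward direction. For the converse I would run the same dictionary in reverse: given an equivalence $\varphi:\g\lr\gs$, set $\bb(x,y):=\la\varphi(x),y\ra$, which is nondegenerate because $\varphi$ is an isomorphism. The intertwining $\ps\varphi=\varphi\p$ yields $\bb(\p(x),y)=\bb(x,\p(y))$ at once, while $\varphi\,\ad_x=\ado(x)\,\varphi$ together with the defining formula for $\ado$ gives $\bb([x,y]\xg,z)=-\bb(y,[\p(x),z]\xg)$; swapping $x$ and $y$ and using skew-symmetry of the bracket converts this into the required invariance $\bb([x,y]\xg,z)=\bb(x,[\p(y),z]\xg)$. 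Note that the converse only asserts a nondegenerate invariant form, not a symmetric one, so no symmetrization is needed and symmetry of $\bb$ is used only in the cyclic computation of the forward direction. The sole delicate point of the whole argument is the two-step bootstrap to weak involutivity; the remaining verifications are routine manipulations of the two axioms and the pairing.
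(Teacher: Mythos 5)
Your proposal is correct, and on two of its three components it coincides with the paper's own proof: the paper also encodes the form as $\varphi:\g\lr\gs$, $\la\varphi(x),y\ra=\bb(x,y)$, checks that the invariance axioms yield exactly the two intertwining identities $\varphi\,\ad_x=\ado(x)\,\varphi$ and $\ps\varphi=\varphi\p$ (using, as you note, only skew-symmetry of the bracket and not symmetry of $\bb$), and obtains the converse by running the same dictionary backwards from an equivalence $\psi:\g\lr\gs$, likewise observing that no symmetry is claimed there. The genuine divergence is in the proof of weak involutivity, which you rightly identify as the crux. The paper's argument is shorter than your bootstrap: from symmetry of $\bb$, skew-symmetry of the bracket and the \emph{first} invariance axiom alone it derives the companion identity $\bb(x,[y,z]\xg)=\bb([x,\p(y)]\xg,z)$, and then the single chain $\bb([x,y]\xg,z)=\bb(x,[\p(y),z]\xg)=\bb([x,\p^2(y)]\xg,z)$ together with nondegeneracy gives $[x,y]\xg=[x,\p^2(y)]\xg$ at once. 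Your two-stage route (first $[x,y]\xg=[\p^2(x),\p^2(y)]\xg$ by cycling the first axiom, then the upgrade to the single-twist identity) is also valid --- both computations go through --- but it consumes more hypotheses: it invokes the second axiom $\bb(\p(x),y)=\bb(x,\p(y))$ and the multiplicativity $\p([x,y]\xg)=[\p(x),\p(y)]\xg$, neither of which enters the paper's derivation at this point. So the paper's route isolates a slightly sharper fact (weak involutivity already follows from the first axiom, symmetry and nondegeneracy), whereas yours is the natural argument if one does not spot the companion identity; your double-twist identity is not extra information, since it follows from weak involutivity and multiplicativity anyway. One small point in your favor: you state explicitly that $(\gs,\ps,\ado)$ must first be known to be a representation, via weak involutivity and Lemma~\ref{lem:wi}, before the phrase ``equivalent representations'' is meaningful; the paper leaves this implicit.
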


Note that the converse statement does not include the symmetric condition.

\begin{proof}
Let \hlg\ be a Hom-Lie algebra with a nondegenerate symmetric invariant bilinear form $\bb(\cdot,\cdot)$. For $x,y,z\in\g$, we have
    \begin{equation*}
    \begin{split}
    \bb(x,[y,z]\xg)=-\bb([z,y]\xg,x)=-\bb(z,[\p(y),x]\xg)=\bb([x,\p(y)]\xg,z).
    \end{split}
    \end{equation*}
    Then we get
    \begin{equation*}
    \begin{split}
    \bb([x,y]\xg,z)=\bb(x,[\p(y),z]\xg)=\bb([x,\p^2(y)]\xg,z).
    \end{split}
    \end{equation*}
    Since $\bb(\cdot,\cdot)$ is nondegenerate, we get $[x,y]\xg=[x,\p^2(y)]\xg$.
    Moreover, define a linear transformation $\varphi:\g\rightarrow
    \g^*$ by
$$\varphi(x) (y):=\langle \varphi(x), y\rangle=B(x,y),\;\;\forall
x,y\in \g.$$
Then $\varphi$ is a linear isomorphism and, for all
$x,y,z\in \g$, we have
\begin{eqnarray*}
\varphi(\ad(x)(y))(z)&=&\langle \varphi([x,y]),z\rangle
=B([x,y],z)=-B([y,x],z) =-B(y, [\p(x),z])\\
& =&-\langle \varphi(y), [\p(x),z])\rangle=\langle
\ado(x)(\varphi(y)), z\rangle=\ado(x)\varphi(y)(z);
\\
\varphi\p(x)(y)&=&\langle \varphi\p(x),
y\rangle=B(\p(x), y)=B(x,\p(y))=\langle
\varphi(x),\p(y)\rangle\\
&=&\langle \p^*\varphi(x),y\rangle=\p^*\varphi(x)(y).
\end{eqnarray*}
Hence $(\g,\p,\ad)$ and
$(\g^*,\p^*,\ado)$ are equivalent representations of \hlg\,.

Conversely, let \hlg\ be
    a weakly involutive Hom-Lie algebra. If $(\g,\p,\ad)$ and
$(\g^*,\p^*,\ado)$ are equivalent
representations of  \hlg\,, then there exists a
linear isomorphism $\psi:\g\rightarrow \gs$ satisfying
$$\psi\ad(x)=\ado(x)\psi,\;\;\psi\p(x)=\p^*\psi(x),\;\;\forall x\in\g.$$
Define a bilinear form on
$\g$ by
$$B(x,y):=\psi(x) y=\langle \psi(x), y\rangle,\;\;\forall
x,y\in \g.$$
Then $B$ is nondegenerate and, by a similar argument, one can
show that $B$ is invariant.
\end{proof}

\section{Matched pairs, Manin triples and Hom-Lie bialgebras}
\label{sec:match}

We first recall the notion of matched pairs of Hom-Lie algebras.
Let \hlg\ and $(\g\pr,[\cdot,\cdot]_{\g\pr},\phi_{_{\g\pr}})$ be two
Hom-Lie algebras. Let $\r:\g\lr\gl(\g\pr)$ and
$\r\pr:\g\pr\lr\gl(\g)$ be two linear maps. On the direct sum $\g\oplus\g\pr$ of the underlying vector spaces, define
\begin{equation}
\begin{split}
\phi_{_{d}}:\g\oplus\g\pr\lr\g\oplus\g\pr, \quad
\phi_{_{d}}(x,x{\pr}):=(\p(x),\phi_{_{\g\pr}}(x{\pr})),
\end{split}
\end{equation}
and define a skew-symmetric bilinear map
$[\cdot,\cdot]_d:\t^2(\g\oplus\g\pr)\lr\g\oplus\g\pr$ by
\begin{equation}\label{db}
\begin{split}
[(x,x\pr),(y,y\pr)]_d:=\big([x,y]\xg-\r\pr(y\pr)(x)+\r\pr(x\pr)(y),[x\pr,y\pr]_{\g\pr}+\r(x)(y\pr)-\r(y)(x\pr)\big).
\end{split}
\end{equation}

\begin{thm}[\cite{BS}]\label{direct-Hom-lie}
    Let \hlg\ and $(\g\pr,[\cdot,\cdot]_{\g\pr},\phi_{_{\g\pr}})$ be two
    Hom-Lie algebras. Then $(\g\oplus\g\pr,[\cdot,\cdot]_d,\phi_{_{d}})$ is a Hom-Lie algebra if and only if $(\g', \phi_{_{\g\pr}},\rho)$ and $(\g, \p,\rho')$
     are
    representations of \hlg\ and $(\g\pr,[\cdot,\cdot]_{\g\pr},\phi_{_{\g\pr}})$ respectively
    and
    \begin{equation}\label{mp1}
    \begin{split}
    \r\pr(\pp(x\pr))[x,y]\xg=& [\r\pr(x\pr)(x),\p(y)]\xg+[\p(x),\r\pr(x\pr)(y)]\xg+\r\pr(\r(y)(x\pr))(\p(x)) \\
    &-\r\pr(\r(x)(x\pr))(\p(y)),
    \end{split}
    \end{equation}
    \begin{equation}\label{mp2}
    \begin{split}
    \r(\p(x))[x\pr,y\pr]_{\g \pr}=& [\r(x)(x\pr),\pp(y\pr)]_{\g \pr}+[\pp(x\pr),\r(x)(y\pr)]_{\g \pr}+\r(\r\pr(y\pr)(x))(\pp(x\pr)) \\
    &-\r(\r\pr(x\pr)(x))(\pp(y\pr)).
    \end{split}
    \end{equation}
\end{thm}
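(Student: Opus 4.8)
The plan is to verify directly that the bracket $[\cdot,\cdot]_d$ defined in Eq.~\eqref{db} together with $\phi_{_d}$ satisfies the two defining axioms of a Hom-Lie algebra, namely that $\phi_{_d}$ is bracket-preserving and that the Hom-Jacobi identity holds, and to show that these two requirements are equivalent to the stated package of conditions: that $(\g',\phi_{_{\g\pr}},\rho)$ and $(\g,\p,\rho')$ are representations, together with the compatibility equations \eqref{mp1} and \eqref{mp2}. Skew-symmetry of $[\cdot,\cdot]_d$ is immediate from the definition, so the entire content lies in these two axioms. First I would expand the condition $\phi_{_d}([(x,x\pr),(y,y\pr)]_d)=[\phi_{_d}(x,x\pr),\phi_{_d}(y,y\pr)]_d$ componentwise. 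Comparing the $\g$-components and the $\g\pr$-components separately, and using that $\p$, $\phi_{_{\g\pr}}$ are already bracket-preserving on their own algebras, the cross-terms force exactly the relations $\rho(\p(x))\b=\b\rho(x)$ (axiom (i) for $(\g',\phi_{_{\g\pr}},\rho)$, reading $\b=\phi_{_{\g\pr}}$) and the symmetric statement for $\rho'$. So the bracket-preserving axiom recovers precisely axiom~(i) of the two representations.

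Next I would substitute three general elements $(x,x\pr),(y,y\pr),(z,z\pr)$ into the Hom-Jacobi identity
\[
[\phi_{_d}(x,x\pr),[(y,y\pr),(z,z\pr)]_d]_d+\text{(cyclic)}=0,
\]
expand each double bracket using Eq.~\eqref{db}, and collect terms according to their multilinear type in the six arguments. The natural bookkeeping device is to group the resulting expression by how many of the arguments come from $\g$ versus $\g\pr$. The terms that are purely in $\g$ (i.e.\ involving only $x,y,z$) reproduce the Hom-Jacobi identity for $\hlg$, which already holds, and likewise the purely-$\g\pr$ terms give the Hom-Jacobi identity for $\g\pr$. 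The genuinely new content sits in the mixed terms. The terms linear in one primed variable, say linear in $x\pr$ and quadratic in $x,y,z$ landing in the $\g$-component, will assemble into axiom~(ii) for $\rho'$ together with Eq.~\eqref{mp1}; the mirror-image grouping, linear in one unprimed variable and landing in the $\g\pr$-component, yields axiom~(ii) for $\rho$ together with Eq.~\eqref{mp2}. Here I would use skew-symmetry of the brackets and relabel cyclic summands so that independent multilinear components can be isolated and set to zero separately.

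The main obstacle, and the step requiring the most care, is the correct separation of the mixed Hom-Jacobi terms into the representation axiom~(ii) part and the matched-pair compatibility part. Because each occurrence of $\phi_{_d}$ applies $\p$ or $\phi_{_{\g\pr}}$ to the outer argument, the twisting maps propagate into every term, and one must track where $\p$, $\phi_{_{\g\pr}}$, $\rho(\p(x))$ versus $\rho(x)$, etc.\ appear; it is exactly this careful tracking that produces the specific placement of $\p$ and $\phi_{_{\g\pr}}$ in \eqref{mp1} and \eqref{mp2}, distinguishing the Hom case from the classical matched-pair identities. The logic of the argument is that within each fixed multilinear type the coefficient of each independent monomial must vanish, so the Hom-Jacobi identity holds \emph{for all} choices of the six arguments if and only if each isolated group vanishes identically, which is precisely the conjunction of representation axiom~(ii) for both $\rho$ and $\rho'$ and of Eqs.~\eqref{mp1}--\eqref{mp2}. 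Since the computation is reversible at each stage, establishing the equivalence rather than just one implication requires no extra work beyond reading the chain of equalities in both directions.
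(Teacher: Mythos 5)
The first thing to note is that the paper itself contains no proof of Theorem~\ref{direct-Hom-lie}: it is imported verbatim from \cite{BS}, so your proposal can only be compared with the direct verification that any proof must carry out, and that is indeed the route you choose. Your architecture is right: skew-symmetry of $[\cdot,\cdot]_d$ is built into Eq.~(\ref{db}); the requirement that $\phi_{_d}$ preserve $[\cdot,\cdot]_d$ reduces, modulo the fact that $\p$ and $\pp$ preserve their own brackets, to exactly axiom (i) for $(\g\pr,\pp,\r)$ and for $(\g,\p,\r\pr)$; and the Hom-Jacobi identity, being trilinear, need only be checked when each argument lies in $\g$ or in $\g\pr$, the two pure cases holding automatically. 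The closing remark that the equivalence comes for free is also sound, because each multilinear piece can be isolated by specializing arguments, so the Hom-Jacobi identity for all arguments is the conjunction of the vanishing of the separate pieces.

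However, your bookkeeping of the mixed cases --- the very step you flag as ``requiring the most care'' --- is wrong as stated. Take $x,y\in\g$ and $x\pr\in\g\pr$ (one primed argument, two unprimed). Expanding
\[
[\phi_{_d}(x),[y,x\pr]_d]_d+[\phi_{_d}(y),[x\pr,x]_d]_d+[\phi_{_d}(x\pr),[x,y]_d]_d=0
\]
and separating components, the $\g$-component is Eq.~(\ref{mp1}), while the $\g\pr$-component reads $\r([x,y]\xg)\,\pp=\r(\p(x))\,\r(y)-\r(\p(y))\,\r(x)$, which is axiom (ii) for $\r$ --- not for $\r\pr$, as you assert. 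Indeed, axiom (ii) for $\r\pr$ is bilinear in the primed variables, so it cannot occur among terms linear in one primed variable; it arises (in the $\g$-component), together with Eq.~(\ref{mp2}) (in the $\g\pr$-component), from the opposite mixed case of two primed arguments and one unprimed. The correct pairing is therefore: one primed argument produces Eq.~(\ref{mp1}) and axiom (ii) for $\r$; one unprimed argument produces axiom (ii) for $\r\pr$ and Eq.~(\ref{mp2}). You instead pair Eq.~(\ref{mp1}) with axiom (ii) for $\r\pr$, and Eq.~(\ref{mp2}) with axiom (ii) for $\r$, attributing each representation axiom to a multilinear type in which it cannot appear. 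Since what you ultimately assert is the equivalence of the Hom-Jacobi identity with the conjunction of all four pieces, your final conclusion is still correct, and actually performing the expansion would repair the attribution automatically; but as a proof sketch the organizing claim is false, and it signals that the computation it stands in for was not carried out.
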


\begin{defi}[\cite{BS}]
    A {\bf matched pair of Hom-Lie algebras} is a quadruple $(\g,\g\pr;\r,\r\pr)$ consisting of two Hom-Lie algebras \hlg\ and $(\g\pr,[\cdot,\cdot]_{\g\pr},\phi_{_{\g\pr}})$, together with representations $\r:\g\lr\gl(\g\pr)$ and $\r\pr:\g\pr\lr\gl(\g)$ with respect to $\pp$ and $\p$ respectively, such that the compatibility conditions (\ref{mp1}) and (\ref{mp2}) are satisfied.
\end{defi}

It is straightforward to reach the following conclusion.
\begin{pro}\label{direct product, weakly}
Let $(\g,\g\pr;\r,\r\pr)$ be a matched pair of Hom-Lie algebras. Then the Hom-Lie algebra $(\g\oplus\g\pr,[\cdot,\cdot]_d,\phi_{_{d}})$ is weakly involutive if and only if the following conditions hold:
\begin{itemize}
    \item[(i)]  $\hlg$ is weakly involutive and $(\g', \phi_{_{\g\pr}},\rho)$ is a weakly involutive representation of $\g$;
    \item[(ii)]  $(\g\pr,[\cdot,\cdot]_{\g\pr},\phi_{_{\g\pr}})$ is weakly involutive and $(\g, \p,\rho')$ is a weakly involutive representation of $\g\pr$;
        \item[(iii)] $\rho(x)\,  \phi_{_{\g\pr}}^2=\rho(x)$ for all $x\in\g$;
        \item[(iv)] $\r\pr(x\pr)\,  \p^2=\r\pr(x\pr)$ for all $x\pr\in\g\pr$.
\end{itemize}
\end{pro}

We now introduce the closely related notion of Manin triples of Hom-Lie algebras.
\begin{defi}
    A {\bf Manin triple of Hom-Lie algebras} is a triple of Hom-Lie algebras $(\k;\g,\g\pr)$ together with a nondegenerate symmetric invariant bilinear form $\bb(\cdot,\cdot)$ on $\k$ such that $\g$ and $\g\pr$ are isotropic Hom-Lie subalgebras of $\k$ and $\k=\g\oplus\g\pr$ as vector spaces.
\end{defi}

By Proposition~\ref{invariant and weakly involutive}, we have
\begin{cor}
    Let $(\k;\g,\g\pr)$ be a Manin triple of Hom-Lie algebras. Then $\k$ and hence $\g, \g\pr$ are weakly involutive Hom-Lie algebras.
\end{cor}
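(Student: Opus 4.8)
The plan is to reduce everything to Proposition~\ref{invariant and weakly involutive}, which already contains the entire analytic content, and then to transfer the resulting property from the ambient algebra to the two subalgebras by restriction. By the definition of a Manin triple, the Hom-Lie algebra $\k$ is equipped with a nondegenerate symmetric invariant bilinear form $\bb(\cdot,\cdot)$. Thus Proposition~\ref{invariant and weakly involutive} applies verbatim to $\k$ and yields at once that $\k$ is weakly involutive, i.e.
\[
[\phi_{_\k}^2(a),b]_\k=[a,b]_\k,\qquad\forall\, a,b\in\k.
\]

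First I would record that $\g$ and $\g\pr$ are Hom-Lie subalgebras of $\k$, so the inclusions are homomorphisms of Hom-Lie algebras; in particular the structure maps of $\k$ restrict to those of the subalgebras, giving $\phi_{_\k}|_\g=\p$ and $[\cdot,\cdot]_\k|_{\g\times\g}=[\cdot,\cdot]\xg$, and likewise $\phi_{_\k}|_{\g\pr}=\phi_{_{\g\pr}}$ and $[\cdot,\cdot]_\k|_{\g\pr\times\g\pr}=[\cdot,\cdot]_{\g\pr}$. Feeding elements of $\g$ into the displayed identity for $\k$ then gives, for all $x,y\in\g$,
\[
[\p^2(x),y]\xg=[\phi_{_\k}^2(x),y]_\k=[x,y]_\k=[x,y]\xg,
\]
which is exactly the weak involutivity of $\g$. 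The identical computation on $\g\pr$ produces $[\pp^2(x\pr),y\pr]_{\g\pr}=[x\pr,y\pr]_{\g\pr}$, so $\g\pr$ is weakly involutive as well.

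I expect no genuine obstacle here, since the hard step is entirely absorbed into Proposition~\ref{invariant and weakly involutive}. The one point that warrants a moment of care is the restriction argument: weak involutivity is a pointwise identity in the bracket and the Hom-map, hence it is automatically inherited by any Hom-Lie subalgebra on which both structure maps restrict. Because a Manin triple is defined precisely so that $\g$ and $\g\pr$ are Hom-Lie subalgebras of $\k$, this restriction is immediate, and the conclusion follows.
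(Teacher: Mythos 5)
Your proposal is correct and follows exactly the paper's route: the paper derives this corollary directly from Proposition~\ref{invariant and weakly involutive} applied to the nondegenerate symmetric invariant form that a Manin triple carries on $\k$, with the passage to $\g$ and $\g\pr$ left implicit. Your explicit restriction argument (weak involutivity is a pointwise identity in the bracket and Hom-map, hence inherited by Hom-Lie subalgebras) is precisely the content of the word ``hence'' in the statement, so nothing is missing.
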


\begin{pro}\label{Hom-Lie bi,Manin triple}
Let \hlg\ and $(\g^*,[\cdot,\cdot]_{\g^*},\phi_{_{\gs}})$ be two weakly involutive Hom-Lie algebras. Then
$(\g\oplus\gs;\g,\gs)$ is a Manin triple of Hom-Lie algebras
associated to the nondegenerate symmetric invariant bilinear form
on $\g\oplus\gs$ defined by
\begin{equation}\label{bilinear form}
\bb(x+a,y+b):=\la x,b\ra+\la y,a\ra
\end{equation}
if and only if $\phi_{_{\gs}}=\ps$ and $(\g,\gs;\ado,\dao)$ is a matched pair of Hom-Lie
algebras. Here $\dao$ is the Hom-dual representation of the adjoint representation $\da$ of the Hom-Lie algebra $\g^*$.
\end{pro}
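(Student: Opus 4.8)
The plan is to reduce the Manin triple assertion to the two conditions on the right, treating separately the two defining identities of an invariant bilinear form. First observe that the form $\bb$ of Eq.~(\ref{bilinear form}) is symmetric and nondegenerate and that $\g$ and $\gs$ are isotropic for it, all immediately from its definition; so these parts of the Manin triple definition carry no information. What remains is (a) that $\k:=\g\oplus\gs$ carry a Hom-Lie algebra structure with $\g,\gs$ as Hom-Lie subalgebras, and (b) that $\bb$ be invariant on $\k$. Since $\g$ and $\gs$ are subalgebras, the Hom-map of $\k$ restricts to $\p$ and $\phi_{_{\gs}}$, forcing $\pd=\p\oplus\phi_{_{\gs}}$, and the bracket restricts to $\bra$ and $[\cdot,\cdot]_{\gs}$, so by skew-symmetry it must be of the form Eq.~(\ref{db}) for uniquely determined linear maps $\r:\g\lr\gl(\gs)$ and $\r\pr:\gs\lr\gl(\g)$. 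By Theorem~\ref{direct-Hom-lie}, $(\k,[\cdot,\cdot]_d,\pd)$ is then a Hom-Lie algebra exactly when $(\g,\gs;\r,\r\pr)$ is a matched pair. Thus the whole statement comes down to showing that the invariance of $\bb$ is equivalent to $\phi_{_{\gs}}=\ps$ together with $\r=\ado$ and $\r\pr=\dao$.

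For the identity $\bb(\pd(u),w)=\bb(u,\pd(w))$, only the mixed component with $u=x\in\g$ and $w=b\in\gs$ is nontrivial (the pure components vanish by isotropy), and it reads $\la\p(x),b\ra=\la x,\phi_{_{\gs}}(b)\ra$. As $\la\p(x),b\ra=\la x,\ps(b)\ra$ by the definition of $\ps$, this holds for all $x,b$ precisely when $\phi_{_{\gs}}=\ps$.

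For the identity $\bb([u,v]_d,w)=\bb(u,[\pd(v),w]_d)$, I would evaluate on pure elements. Taking $u=a\in\gs$, $v=y\in\g$, $w=z\in\g$ and using Eq.~(\ref{db}) and the definition of $\bb$ yields
\begin{equation*}
-\la z,\r(y)(a)\ra=\la [\p(y),z]\xg,a\ra,
\end{equation*}
which by the defining relation $\la\ado(y)(a),z\ra=-\la a,[\p(y),z]\xg\ra$ of the Hom-dual representation is equivalent to $\r=\ado$. Symmetrically, taking $u=x\in\g$, $v=b\in\gs$, $w=c\in\gs$ and using $\phi_{_{\gs}}=\ps$ yields
\begin{equation*}
-\la \r\pr(b)(x),c\ra=\la x,[\ps(b),c]_{\gs}\ra,
\end{equation*}
which by $\la\dao(b)(x),c\ra=-\la x,[\phi_{_{\gs}}(b),c]_{\gs}\ra$ is equivalent to $\r\pr=\dao$. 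These two evaluations give the forward direction.

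For the converse I would, with $\phi_{_{\gs}}=\ps$, $\r=\ado$ and $\r\pr=\dao$ now fixed, check that the first invariance identity holds on all eight pure components. The two purely-$\g$ and purely-$\gs$ components vanish on both sides by isotropy, the two components above hold by construction, and the remaining four mixed components (for instance $(x,y,c)$ with $x,y\in\g,c\in\gs$, and $(a,b,z)$ with $a,b\in\gs,z\in\g$) collapse, after inserting the definitions of $\ado$ and $\dao$ and using that $\p$ and $\phi_{_{\gs}}$ preserve the brackets, to identities of the shape $[\p^2(y),z]\xg=[y,z]\xg$ and $[\phi_{_{\gs}}^2(b),c]_{\gs}=[b,c]_{\gs}$ --- exactly the weak involutivity of $\g$ and of $\gs$. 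I expect this last verification to be the main obstacle: it is where the weak involutivity hypotheses are genuinely consumed, and unlike the clean extraction of $\r$ and $\r\pr$ above it has to be run through component by component. Assembling the two directions then gives the stated equivalence.
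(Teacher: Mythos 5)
Your proposal is correct and takes essentially the same route as the paper's proof: the extraction of $\phi_{_{\gs}}=\ps$, $\r=\ado$, $\r\pr=\dao$ from the mixed components of the invariance identities is exactly the paper's ``only if'' computation, and your component-by-component verification of invariance (with weak involutivity consumed precisely in the four remaining mixed cases) is exactly the paper's ``if'' computation, with Theorem~\ref{direct-Hom-lie} supplying the matched-pair equivalence in both treatments. The only difference is organizational: you make explicit at the outset that any candidate bracket must have the form of Eq.~(\ref{db}), a step the paper leaves implicit when it concludes that the bracket is given by Eq.~(\ref{d-bracket}).
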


\begin{proof}  Suppose that $\phi_{_{\gs}}=\p^*$ and $(\g,\gs;\ado,\dao)$ is a matched pair of Hom-Lie
algebras. Then $(\g\oplus\gs,[\cdot,\cdot]_d,\p\oplus\ps)$ is a
Hom-Lie algebra, where $[\cdot,\cdot]_d$ is given by
\begin{equation}\label{d-bracket}
[x+a,y+b]_d:=[x,y]\xg+\adox b-\ado_y a+[a,b]_{\gs}+\dao_{a}
y-\dao_{b} x.
\end{equation}

Let $x,y\in\g$ and $a,b\in\gs$. We just need to prove that the
bilinear form defined by Eq.~(\ref{bilinear form}) is invariant.
Setting $\phi_{_d}=\p\oplus\ps$, we have
\begin{align*}
\bb(\pd(x),a)&=\la \p(x),a\ra=\la x,\ps(a)\ra=\bb(x,\pd(a)),\\
\bb(x,[\pd(y),a]_d)&=\la x,\ado_{\p(y)} a\ra=-\la [\p^2(y),x]\xg,a\ra=\la [x,y]\xg,a\ra=\bb([x,y]_d,a),\\
\bb([x,a]_d,y)&=\la \adox a,y\ra=\la  [y,\p(x)]\xg,a\ra,\\
\bb(x,[\pd(a),y]_d)&=-\la x,\ado_{y} \ps(a)\ra=\la [\p(y),x]\xg,\ps(a)\ra=\la  [y,\p(x)]\xg,a\ra=\bb([x,a]_d,y),\\
\bb([x,a]_d,b)&=-\la \dao_{a} x,b\ra=\la
x,[\ps(a),b]_{\gs}\ra=\bb(x,[\pd(a),b]_d).
\end{align*}
Therefore the bilinear form defined by Eq.~(\ref{bilinear form})
is invariant.

Conversely, let $(\g\oplus\gs;\g,\gs)$ be a Manin triple of
Hom-Lie algebras associated to the invariant bilinear form $\bb$
given by Eq.~(\ref{bilinear form}). Then for any $x,y\in\g$ and
$a,b\in\gs$, due to the invariance of $\bb$, we have
\begin{equation*}
\begin{split}
\bb(\p(x),a)=&\la \p(x),a\ra=\la x,\ps(a)\ra,\\
\bb(x,\phi_{_{\gs}}(a))=&\la x,\phi_{_{\gs}}(a)\ra,
\end{split}
\end{equation*}
which implies $\phi_{_{\gs}}=\ps$. Since
\begin{equation*}
\begin{split}
\bb([x,a]_d,y)=& \bb([y,\p(x)]\xg,a)=\la -\ad_{\p(x)} y,a\ra=\la \adox a,y\ra,\\
\bb([x,a]_d,b)=& \bb(x,[\ps(a),b]_{\gs})=\la x,\da_{\ps(a)}
b\ra=\la -\dao_{a} x,b\ra,
\end{split}
\end{equation*}
we have
\begin{equation*}
[x,a]_d=\adox a-\dao_{a} x.
\end{equation*}
So the Hom-Lie bracket on $\g\oplus\gs$ is given by Eq.~(\ref{d-bracket}). Therefore, $(\g,\gs;\ado,\dao)$ is a matched
pair of Hom-Lie algebras.
\end{proof}

For a Hom-Lie algebra \hlg\ (resp.
$(\gs,[\cdot,\cdot]_{\gs},\ps)$), let $\d_{\g^*}:\gs\lr\t^2\gs$
(resp. $\d_{\g}:\g\lr\t^2\g$) be the dual map of $\bra:\t^2
\g\lr\g$ (resp. $[\cdot,\cdot]_{\gs}:\t^2\gs\lr\gs$), i.e.,
\begin{equation}\label{eq:dual} \la \d_{\g^*}( a),x\t y\ra=\la a,[x,y]\xg\ra,\ \ \ \la\d_\g(x), a\t b\ra=\la x,[ a, b]_{\gs}\ra.\end{equation}
In particular, we set $\Delta:=\d_\g$.

\begin{thm}\label{Hom-lie-bi,matched pair}
    Let \hlg\ and \hlgs\ be two weakly involutive Hom-Lie algebras.  Then $(\g,\gs;\ado,\dao)$ is a matched pair of Hom-Lie
algebras if and only if
\begin{eqnarray}
    \label{Hom-Lie-bi-c1} \d[x,y]\xg &=& \ad_{\p(x)}\d(y)-\ad_{\p(y)}\d(x),
    \end{eqnarray}
where $\Delta=:\d_\g$ is given by Eq.~(\ref{eq:dual}).
\end{thm}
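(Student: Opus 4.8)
The plan is to split the definition of a matched pair into its representation requirements and its two compatibility conditions \eqref{mp1} and \eqref{mp2}, dispose of the former using weak involutivity, and then show by a dualization argument that each of the latter two conditions is equivalent to the single cocycle identity \eqref{Hom-Lie-bi-c1}.

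First I would verify that the representation data comes for free. Since \hlg\ and \hlgs\ are weakly involutive, their adjoint representations $(\g,\p,\ad)$ and $(\gs,\ps,\da)$ satisfy $\ad(\p^2(x))=\ad(x)$ and $\da(\ps^2(a))=\da(a)$, that is, are weakly involutive; hence by Lemma~\ref{lem:wi}(1) the Hom-dual representations $(\gs,\ps,\ado)$ and $(\g,\p,\dao)$ are genuine representations of $\g$ and $\gs$ respectively. Thus the only remaining content of the assertion that $(\g,\gs;\ado,\dao)$ is a matched pair consists of \eqref{mp1} and \eqref{mp2} with $\r=\ado$, $\r\pr=\dao$ and $\phi_{_{\g\pr}}=\ps$.

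The heart of the argument is to convert these two vector identities into one scalar identity by pairing. I would first record the defining relations $\la\ado(x)(a),y\ra=-\la a,[\p(x),y]\xg\ra$, $\la\dao(a)(x),b\ra=-\la x,[\ps(a),b]_{\gs}\ra$ and $\la\d(x),a\t b\ra=\la x,[a,b]_{\gs}\ra$, together with the fact that, writing $\d(y)=\sum y_{(1)}\t y_{(2)}$, the induced adjoint action on $\t^2\g$ reads $\ad_{\p(x)}\d(y)=\sum[\p(x),y_{(1)}]\xg\t\p(y_{(2)})+\p(y_{(1)})\t[\p(x),y_{(2)}]\xg$. Pairing \eqref{Hom-Lie-bi-c1} with an arbitrary $a\t b\in\t^2\gs$ and unwinding these relations shows that \eqref{Hom-Lie-bi-c1} is equivalent to
\begin{equation*}
\begin{split}
\la[x,y]\xg,[a,b]_{\gs}\ra=&\la x,[\ado(y)(a),\ps(b)]_{\gs}+[\ps(a),\ado(y)(b)]_{\gs}\ra\\
&-\la y,[\ado(x)(a),\ps(b)]_{\gs}+[\ps(a),\ado(x)(b)]_{\gs}\ra
\end{split}
\end{equation*}
holding for all $x,y\in\g$ and $a,b\in\gs$. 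I would then pair \eqref{mp2}, an identity in $\gs$, with an arbitrary $y\in\g$: the left-hand term $\ado(\p(x))[a,b]_{\gs}$ produces $[\p^2(x),y]\xg$, which the weak involutivity of $\g$ collapses to $[x,y]\xg$, while the two cross terms $\ado(\dao(b)(x))(\ps(a))$ and $-\ado(\dao(a)(x))(\ps(b))$ are rewritten by skew-symmetry and the same relations (again using $[\p^2(w),\p(y)]\xg=[w,\p(y)]\xg$). The result is precisely the scalar identity above, so \eqref{mp2} $\Leftrightarrow$ \eqref{Hom-Lie-bi-c1}. Dually, pairing \eqref{mp1}, an identity in $\g$, with an arbitrary $c\in\gs$ and now using the weak involutivity of $\gs$ in the form $[\ps^2(a),c]_{\gs}=[a,c]_{\gs}$ reduces \eqref{mp1} to the same identity up to an overall sign, whence \eqref{mp1} $\Leftrightarrow$ \eqref{Hom-Lie-bi-c1} as well. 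Since both compatibility conditions reduce to the same identity, their conjunction is equivalent to \eqref{Hom-Lie-bi-c1}, establishing the equivalence.

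I expect the only genuine difficulty to be bookkeeping rather than conceptual. The delicate points are tracking how the Hom-twists $\p$ and $\ps$ migrate across the pairing $\la\cdot,\cdot\ra$, and confirming that the weak involutivity hypotheses on $\g$ and on $\gs$ are exactly what is needed to absorb the spurious $\p^2$ and $\ps^2$ factors generated by the coadjoint actions; here the identity $(\adox{\ps}^2)(a)=\adox(a)$ of Corollary~\ref{cor add} is the precise tool. The subtlest step is matching the two cross terms of each matched pair condition with the second summand $-\ad_{\p(y)}\d(x)$ (resp. $-\ad_{\p(x)}\d(y)$) of \eqref{Hom-Lie-bi-c1} with the correct signs; once skew-symmetry of the brackets is applied in the right order these fall into place.
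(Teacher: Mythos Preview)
Your proposal is correct and follows essentially the same approach as the paper: both arguments pair the specialized matched-pair conditions \eqref{mp1}--\eqref{mp2} against arbitrary test elements, invoke weak involutivity of $\g$ and of $\gs$ to eliminate the $\p^2$ and $\ps^2$ factors arising from $\ado$ and $\dao$, and thereby reduce everything to a single scalar identity equivalent to \eqref{Hom-Lie-bi-c1}. The only organizational difference is that the paper routes \eqref{mp2} through the auxiliary dual cocycle identity $\d_{\gs}[a,b]_{\gs}=\da_{\ps(a)}\d_{\gs}(b)-\da_{\ps(b)}\d_{\gs}(a)$ and then separately proves this equivalent to \eqref{Hom-Lie-bi-c1}, whereas you collapse both \eqref{mp1} and \eqref{mp2} directly onto the same scalar identity; the underlying computations are the same.
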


\begin{proof}
    By Theorem \ref{direct-Hom-lie}, $(\g,\gs;\ado,\dao)$  is a
    matched pair of Hom-Lie algebras  if and only if
    \begin{eqnarray}
    \label{c1}  \dao_{\ps( a)}[x,y]\xg &=& [\dao_ a x,\p(y)]\xg,+[\p(x),\dao_ a y]\xg+\dao_{\ado_y  a}\p(x)-\dao_{\ado_x  a}\p(y), \\
    \label{c2} \ado_{\p(x)}[ a, b] _{\gs}&=& [\ado_x  a,\ps( b)]_{\gs}+[\ps( a),\ado_x  b]_{\gs}+\ado_{\dao_ b x}\ps( a)-\ado_{\dao_ a x}\ps( b).
    \end{eqnarray}
    Then we get
    \begin{align*}
    0=& \la -\dao_{\ps( a)}[x,y]\xg+[\dao_ a x,\p(y)]\xg,+[\p(x),\dao_ a y]\xg+\dao_{\ado_y  a}\p(x)-\dao_{\ado_x  a}\p(y), b\ra \\
    =& \la [x,y]\xg,[{\ps}^2( a), b]_{\gs}\ra-\la \ad_{\p(y)}\dao_ a x, b\ra+\la \ad_{\p(x)}\dao_ a y, b\ra-\la \p(x),[\ps\ado_y  a, b]_{\gs}\ra \\
    & +\la \p(y),[\ps\ado_x  a, b]_{\gs}\ra \\
    =& \la [x,y]\xg,[ a, b]_{\gs}\ra-\la x,[\ps( a),\ado_y  b]_{\gs}\ra+\la y,[\ps( a),\ado_x  b]_{\gs}\ra-\la x,[{\ps}^2\ado_y  a,\ps b]_{\gs}\ra  \\
    & +\la y,[{\ps}^2\ado_x  a,\ps b]_{\gs}\ra\\
    =& \la [x,y]\xg,[ a, b]_{\gs}\ra-\la x,[\ps( a),\ado_y  b]_{\gs}\ra+\la y,[\ps( a),\ado_x  b]_{\gs}\ra-\la x,[\ado_y  a,\ps b]_{\gs}\ra  \\
    & +\la y,[\ado_x  a,\ps b]_{\gs}\ra\\
    =& \la \d[x,y]\xg, a\t b\ra-\la \d(x),\ps( a)\t\ado_y b \ra+\la \d(y),\ps( a)\t\ado_x  b\ra-\la \d(x),\ado_y  a\t\ps b\ra  \\
    & +\la \d(y),\ado_x  a\t\ps b\ra,\\
    =& \la \d[x,y]\xg, a\t b\ra-\la \d(x),\ads_{\p(y)}( a\t b) \ra+\la \d(y),\ads_{\p(x)}( a\t b) \ra\\
    =& \la \d[x,y]\xg, a\t b\ra+\la \ad_{\p(y)}\d(x), a\t b\ra-\la \ad_{\p(x)}\d(y), a\t b\ra,
    \end{align*}
    which is exactly Eq.~(\ref{Hom-Lie-bi-c1}). Similarly, we could deduce that Eq.~(\ref{c2}) is equivalent to the equation
    \begin{equation}\label{Hom-Lie-bi-c2}
    \d_{\gs}[ a, b]_{\gs}=\da_{\ps( a)}\d_{\gs}( b)-\da_{\ps( b)}\d_{\gs}( a).
    \end{equation}
    Next we prove that Eq.~(\ref{Hom-Lie-bi-c1}) and Eq.~(\ref{Hom-Lie-bi-c2}) are equivalent.
    In fact,
    we have
    \begin{align*}
    & \la \d[x,y]\xg-\ad_{\p(x)}\d(y)+\ad_{\p(y)}\d(x),a\t b\ra\\
    =& \la [x,y]\xg,[a,b]_{\gs}\ra+\la \d(y),\adox(a\t b)\ra-\la \d(x),\ado_y (a\t b)\ra\\
    =& \la \d_{\gs} [a,b]_{\gs},x\t y\ra+\la \d(y),\adox (a)\t \ps(b)+\ps(a)\t \adox (b)\ra\\
    & -\la \d(x),\ado_y (a)\t \ps(b)+\ps(a)\t \ado_y (b)\ra\\
    =& \la \d_{\gs} [a,b]_{\gs},x\t y\ra+\la y,[\adox (a),\ps(b)]_{\gs}\ra+\la y,[\ps(a),\adox (b)]_{\gs}\ra\\
    & -\la x,[\ado_y (a),\ps(b)]_{\gs}\ra-\la x,[\ps(a),\ado_y (b)]_{\gs}\ra\\
    =& \la \d_{\gs} [a,b]_{\gs},x\t y\ra+\la \dao_b (y),\adox (a)\ra-\la \dao_a (y),\adox (b)\ra-\la \dao_b (x),\ado_y (a)\ra\\
    &+\la \dao_a (x),\ado_y (b)\ra\\
    =& \la \d_{\gs} [a,b]_{\gs},x\t y\ra-\la a,[\p(x),\dao_b (y)]\xg\ra+\la b,[\p(x),\dao_a (y)]\xg\ra\\
    & +\la a,[\p(y),\dao_b (x)]\xg\ra-\la b,[\p(y),\dao_a (x)]\xg\ra\\
    =& \la \d_{\gs}[ a, b]_{\gs}-\da_{\ps( a)}\d_{\gs}( b)+\da_{\ps( b)}\d_{\gs}( a),x\t y\ra.
    \end{align*}
    Therefore the two equations are equivalent.
This gives what we need.
\end{proof}

\begin{defi}\label{defi Hom-Lie bialgebra}
    A pair of weakly involutive Hom-Lie algebras \hlg\ and \hlgs\ with  $\Delta=:\d_\g$ given by Eq.~(\ref{eq:dual}) is called a {\bf Hom-Lie bialgebra} if Eq.~(\ref{Hom-Lie-bi-c1}) holds. We denote it by $(\g,\g^*)$ or $(\g,\Delta)$.
\end{defi}

\begin{rem}
We note that this notion of a Hom-Lie bialgebra is different from either of the three notions of a
Hom-Lie bialgebra given in \cite{DY-hlbi,BS,CS-purely}. Even
though it has the same compatibility condition as in
\cite{DY-hlbi}, it is just for weakly involutive Hom-Lie algebras,
not all Hom-Lie algebras. Under this compatibility condition,
there is a natural Hom-Lie algebra structure on $\g\oplus\gs$ as
follows, which does not exist in \cite{DY-hlbi}.
\end{rem}

\begin{cor}\label{bialgebra, weakly}
Let $(\g,\g^*)$ be a Hom-Lie bialgebra. Then the Hom-Lie algebra $(\g\oplus\gs,[\cdot,\cdot]_d,\p\oplus\ps)$ is weakly involutive.
\end{cor}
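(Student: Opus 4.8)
The plan is to reduce the claim to Proposition~\ref{direct product, weakly} by recognizing the double $(\g\oplus\gs,[\cdot,\cdot]_d,\p\oplus\ps)$ as the direct-sum Hom-Lie algebra attached to a matched pair. First I would invoke Theorem~\ref{Hom-lie-bi,matched pair}: since $(\g,\g^*)$ is a Hom-Lie bialgebra, Eq.~(\ref{Hom-Lie-bi-c1}) holds by Definition~\ref{defi Hom-Lie bialgebra}, so $(\g,\gs;\ado,\dao)$ is a matched pair of weakly involutive Hom-Lie algebras and the double is indeed a Hom-Lie algebra. The task then becomes verifying that this matched pair meets the four conditions (i)--(iv) of Proposition~\ref{direct product, weakly}, specialized to $\g\pr=\gs$, $\phi_{_{\g\pr}}=\ps$, $\r=\ado$ and $\r\pr=\dao$.

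Next I would dispatch conditions (i) and (ii) together. The weak involutivity of $\hlg$ and $\hlgs$ is part of the definition of a Hom-Lie bialgebra, so the first halves of (i) and (ii) are immediate. For the representation halves, the point is that $(\g,\p,\ad)$ is a weakly involutive representation precisely because $\g$ is a weakly involutive Hom-Lie algebra; applying Lemma~\ref{dual rep, weakly} to it shows that its Hom-dual representation $(\gs,\ps,\ado)$ is weakly involutive, which is the representation half of (i). Symmetrically, the weak involutivity of $\gs$ makes $(\gs,\ps,\da)$ a weakly involutive representation, and Lemma~\ref{dual rep, weakly} gives the weak involutivity of $(\g,\p,\dao)$, using the identification $\ps^*=\p$ under $\g^{**}\cong\g$; this is the representation half of (ii).

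Conditions (iii) and (iv) are exactly the content of Corollary~\ref{cor add}. Condition (iii) reads $\ado(x)\,\ps^2=\ado(x)$, which is the displayed identity $(\adox\ps^2)(a)=\adox(a)$ of that corollary applied to the weakly involutive Hom-Lie algebra $\g$; and condition (iv) reads $\dao(a)\,\p^2=\dao(a)$, which is the same corollary applied to the weakly involutive Hom-Lie algebra $\gs$, again invoking $\ps^*=\p$. With (i)--(iv) in hand, Proposition~\ref{direct product, weakly} immediately yields that $(\g\oplus\gs,[\cdot,\cdot]_d,\p\oplus\ps)$ is weakly involutive.

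Since every ingredient is a previously established result, there is no genuine analytic obstacle here; the corollary is essentially an assembly step. The only care required is bookkeeping: correctly matching the roles $\r\leftrightarrow\ado$, $\r\pr\leftrightarrow\dao$, $\phi_{_{\g\pr}}\leftrightarrow\ps$ in Proposition~\ref{direct product, weakly}, and keeping track of the double-dual identification $\ps^*=\p$ when transporting Lemma~\ref{dual rep, weakly} and Corollary~\ref{cor add} from $\g$ to $\gs$.
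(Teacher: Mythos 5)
Your proposal is correct and follows essentially the same route as the paper's own proof: obtain the matched pair $(\g,\gs;\ado,\dao)$ from the bialgebra structure, verify the weak involutivity of the representations $(\gs,\ps,\ado)$ and $(\g,\p,\dao)$ via Lemma~\ref{dual rep, weakly}, establish conditions (iii) and (iv) via Corollary~\ref{cor add}, and conclude with Proposition~\ref{direct product, weakly}. Your extra care with the double-dual identification $\ps^*=\p$ is a bookkeeping point the paper leaves implicit, but it is not a different argument.
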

\begin{proof}
Since $(\g,\g^*)$ is a Hom-Lie bialgebra, it follows that \hlg\ and \hlgs\ are weakly involutive Hom-Lie algebras, and $(\g,\gs;\ado,\dao)$ is a matched pair of Hom-Lie algebras. Then $(\g,\p,\ad)$ and $(\gs,\ps,\da)$ are weakly involutive representations of $\g$ and $\gs$ respectively. Hence by Lemma \ref{dual rep, weakly}, $(\gs,\ps,\ado)$ and $(\g,\p,\dao)$ are weakly involutive representations of $\g$ and $\gs$ respectively. By Corollary \ref{cor add}, we have
\begin{equation*}
\ado_x\,  {\ps}^2=\ado_x,\ \ \dao_a\,  \p^2=\dao_a,
\end{equation*}
for any $x\in\g,a\in\gs$. Finally, Proposition \ref{direct product, weakly} gives the conclusion.
\end{proof}

\begin{defi}\label{defi. of homomorphism of Hom-Lie bialgebras}
A homomorphism of Hom-Lie bialgebras $f:(\g,\d_{\g})\lr (\k,\d_{\k})$ is a homomorphism of Hom-Lie algebras such that
$$(f\t f)\,  \d_{\g}=\d_{\k}\,  f.$$
\end{defi}

Combining Proposition \ref{Hom-Lie bi,Manin triple}, Theorem \ref{Hom-lie-bi,matched pair} and Definition~\ref{defi Hom-Lie bialgebra}, we arrive at the following conclusion which, when $\p$ is the identity, recovers the classical characterization of Lie bialgebras in terms of matched pairs and Manin triples.

\begin{thm}
    Let \hlg\ and \hlgs\ be two weakly involutive Hom-Lie algebras. Then the following     conditions are equivalent.
    \begin{itemize}
        \item[(i)]  $(\g,\gs)$ is a Hom-Lie bialgebra.
        \item[(ii)]  $(\g,\gs;\ado,\dao)$ is a matched pair of Hom-Lie algebras.
        \item[(iii)]  $(\g\oplus\gs;\g,\gs)$ is a Manin triple of Hom-Lie algebras associated to the invariant bilinear form given by Eq. (\ref{bilinear form}).
    \end{itemize}
\label{thm:equiv}
\end{thm}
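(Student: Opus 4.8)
The plan is to obtain all three equivalences by chaining the results already established, since the standing hypothesis of the theorem—that both \hlg\ and \hlgs\ are weakly involutive Hom-Lie algebras, with the Hom-map on $\gs$ fixed to be $\ps$ by the notation \hlgs—matches precisely the hypotheses required by Theorem~\ref{Hom-lie-bi,matched pair} and Proposition~\ref{Hom-Lie bi,Manin triple}. So the theorem is essentially a packaging statement, and the proof amounts to verifying that the objects appearing in those two results are literally the same objects named in conditions (i)--(iii).

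First I would prove (i) $\Leftrightarrow$ (ii). By Definition~\ref{defi Hom-Lie bialgebra}, the assertion that $(\g,\gs)$ is a Hom-Lie bialgebra means exactly that the compatibility condition Eq.~(\ref{Hom-Lie-bi-c1}) holds, with $\Delta=\d_\g$ the cobracket dual to $[\cdot,\cdot]_{\gs}$ via Eq.~(\ref{eq:dual}). Theorem~\ref{Hom-lie-bi,matched pair} asserts, under exactly these weakly involutive hypotheses, that Eq.~(\ref{Hom-Lie-bi-c1}) is equivalent to $(\g,\gs;\ado,\dao)$ being a matched pair of Hom-Lie algebras. Hence (i) and (ii) are equivalent with no further computation.

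Next I would prove (ii) $\Leftrightarrow$ (iii) by invoking Proposition~\ref{Hom-Lie bi,Manin triple}, which states that $(\g\oplus\gs;\g,\gs)$ is a Manin triple associated to the bilinear form in Eq.~(\ref{bilinear form}) if and only if $\phi_{_{\gs}}=\ps$ and $(\g,\gs;\ado,\dao)$ is a matched pair. Since the Hom-map on $\gs$ is already $\ps$ by hypothesis, the condition $\phi_{_{\gs}}=\ps$ is automatic, so the proposition collapses to precisely the equivalence of (ii) and (iii). Combining the two equivalences then closes the cycle (i) $\Leftrightarrow$ (ii) $\Leftrightarrow$ (iii).

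Because the entire content of the theorem is already contained in the cited results, I expect no genuine mathematical obstacle in this proof; the only point requiring care is bookkeeping. I would make sure that the cobracket $\Delta=\d_\g$, the Hom-map identification $\phi_{_{\gs}}=\ps$, and the weakly involutive assumptions refer consistently to the same data across Definition~\ref{defi Hom-Lie bialgebra}, Theorem~\ref{Hom-lie-bi,matched pair}, and Proposition~\ref{Hom-Lie bi,Manin triple}, so that the three statements glue together with no hidden gap. Once these identifications are confirmed, each implication reduces to a one-line citation of the relevant earlier result.
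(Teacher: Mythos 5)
Your proposal is correct and follows exactly the paper's own route: the paper derives this theorem by combining Definition~\ref{defi Hom-Lie bialgebra}, Theorem~\ref{Hom-lie-bi,matched pair} (giving (i)~$\Leftrightarrow$~(ii)), and Proposition~\ref{Hom-Lie bi,Manin triple} (giving (ii)~$\Leftrightarrow$~(iii), with $\phi_{_{\gs}}=\ps$ automatic), just as you do.
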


\section{Coboundary Hom-Lie bialgebras and the classical Hom-Yang-Baxter equation}
\label{sec:quasi}

In this section, we study coboundary Hom-Lie bialgebras and their
relationship with the classical Hom-Yang-Baxter equation. For a weakly involutive Hom-Lie algebra
\hlg\ and $r\in\g\otimes\g$, define linear maps
\begin{equation}\label{definition of d}
\d:\g\lr\g\otimes \g, \quad \d(x)=\ad_x
r(:=[x,r]\xg):=(\p\t\ad_x+\ad_x\t\p)r, \forall x\in \g,
\end{equation}
and
\begin{equation}\label{dual bracket}
[\cdot,\cdot]_{\gs}:\gs\otimes \gs\lr\gs, \quad \la [ a, b]_{\gs},x\ra=\la\d(x), a\t b\ra, \forall a, b\in \gs, x\in \g.
\end{equation}

In order for \hlgs\ to be a weakly involutive Hom-Lie algebra and for $(\g,\gs)$
to be a Hom-Lie bialgebra, the following conditions should hold.
\begin{enumerate}
\item  $\ps$ is a Lie algebra homomorphism with respect to
$[\cdot,\cdot]_{\gs}$, or equivalently,
$\d(\p(x))=(\p\t\p)\d(x)$;
\item
$[{\ps}^2(a),b]_{\gs}=[a,b]_{\gs}$, or equivalently,
$(\p^2\t\id)\d(x)=\d(x)$;
\item
$\d[x,y]\xg-(\ad_{\p(x)}\d(y)-\ad_{\p(y)}\d(x))=0$;
\item  \hlgs\ is a Hom-Lie algebra.
\end{enumerate}
So we first investigate these conditions. For the first three conditions we have the following result, noting the common factor $(\p\t\id-\id\t\p)r$ on the right hand sides.

\begin{lem}\label{lem:delta}
Let \hlg\ be a weakly involutive Hom-Lie algebra and $r\in\g\otimes
\g$. Define a linear map $\d:\g\lr\g\otimes \g$ by
Eq.~(\ref{definition of d}). Then we have
\begin{enumerate}
\item
$\d(\p(x))-(\p\t\p)\d(x)= (\ad_{\p(x)}\, \p\t \p-\p\t \ad_{\p(x)}\, \p)(\p\t\id-\id\t\p)r;
$
\label{it:alghom}
\item
$    (\p^2\t\id)\d(x)-\d(x)= (\p\t \ad_x)(\p\t\id+\id\t\p)(\p\t\id-\id\t\p)r;
$
\label{it:inv}
\item
$\d[x,y]\xg-(\ad_{\p(x)}\d(y)-\ad_{\p(y)}\d(x))\\
    = (\ad_{[x,y]\xg}\, \p\t \p-\p\t \ad_{[x,y]\xg}\, \p)(\p\t\id-\id\t\p)r.$
\label{it:comp}
\end{enumerate}
\end{lem}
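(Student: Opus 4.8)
The plan is to treat $\d$ as a composite of tensor-product operators acting on the fixed element $r\in\g\t\g$ and to reduce each of the three identities to an equality of such operators, checked term by term. Writing $\d(x)=(\p\t\ad_x+\ad_x\t\p)r$, I would first record the three operator identities that drive every computation: (1) since $\p$ preserves the bracket, $\p\,\ad_x=\ad_{\p(x)}\,\p$ on $\g$; (2) the representation axiom (ii) for the adjoint representation $(\g,\p,\ad)$ gives $\ad_{[x,y]\xg}\,\p=\ad_{\p(x)}\,\ad_y-\ad_{\p(y)}\,\ad_x$; and (3) weak involutivity yields both $\ad_x\,\p^2=\ad_x$ (directly, from skew-symmetry and $[\p^2(u),v]\xg=[u,v]\xg$) and $\p^2\,\ad_x=\ad_x$ (combining bracket preservation with the first part). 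These three facts, together with their instances obtained by substituting $\p(x)$ or $[x,y]\xg$ for $x$, are all that is required.

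For part (\ref{it:alghom}), I would expand $\d(\p(x))$ and $(\p\t\p)\d(x)$, use (1) to rewrite $\p\,\ad_x=\ad_{\p(x)}\,\p$ in the latter, and separately expand the right-hand side $(\ad_{\p(x)}\p\t\p-\p\t\ad_{\p(x)}\p)(\p\t\id-\id\t\p)r$. After applying (3) in the form $\ad_{\p(x)}\p^2=\ad_{\p(x)}$ to the right-hand side, the four surviving tensor monomials on each side coincide. Part (\ref{it:inv}) is the shortest: expanding $(\p^2\t\id)\d(x)-\d(x)$ and simplifying the right-hand side $(\p\t\ad_x)(\p^2\t\id-\id\t\p^2)r$ via $\ad_x\p^2=\ad_x$, the discrepancy between the two sides reduces to exactly $((\p^2\ad_x-\ad_x)\t\p)r$, which vanishes by the second half of identity (3).

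Part (\ref{it:comp}) is the main computation and the place I expect the only real work. I would expand $\ad_{\p(x)}\d(y)$ and $\ad_{\p(y)}\d(x)$ each into four tensor monomials. The key simplification is that, after substituting (1), the mixed monomials of the form $\p\ad_\bullet\t\p\ad_\bullet$ arising in the two differences cancel in pairs, leaving only $\p^2\t(\ad_{\p(x)}\ad_y-\ad_{\p(y)}\ad_x)$ and its flip. Identity (2) then collapses $\ad_{\p(x)}\ad_y-\ad_{\p(y)}\ad_x$ into $\ad_{[x,y]\xg}\,\p$, giving $\ad_{\p(x)}\d(y)-\ad_{\p(y)}\d(x)=(\p^2\t\ad_{[x,y]\xg}\p+\ad_{[x,y]\xg}\p\t\p^2)r$. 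Subtracting this from $\d[x,y]\xg$ and expanding the right-hand side of (\ref{it:comp}) (using $\ad_{[x,y]\xg}\p^2=\ad_{[x,y]\xg}$ from (3)), the two sides match monomial by monomial. The only subtlety worth flagging is the bookkeeping in this cancellation of mixed terms and the correct use of the Hom-representation identity (2), rather than the classical Jacobi identity, to absorb the double adjoint into $\ad_{[x,y]\xg}\p$.
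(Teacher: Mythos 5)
Your proposal is correct and takes essentially the same approach as the paper's own proof: both expand $\d$ into tensor-product operators acting on $r$, then match terms using the bracket-preservation identity $\p\,\ad_x=\ad_{\p(x)}\,\p$, the two operator forms of weak involutivity $\ad_x\,\p^2=\ad_x$ and $\p^2\,\ad_x=\ad_x$, and the adjoint Hom-representation identity $\ad_{[x,y]\xg}\,\p=\ad_{\p(x)}\,\ad_y-\ad_{\p(y)}\,\ad_x$, including the pairwise cancellation of the mixed monomials in part (c). The paper organizes the computation by rewriting both sides into factored form rather than stating the operator identities up front, but the content is identical and your argument has no gaps.
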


\begin{proof}
(\mref{it:alghom}).  For $x,y\in\g$, we have
    \begin{equation*}
    \begin{split}
    \d(\p(x))=& [\p(x),r]\xg=\ad_{\p(x)} r\\
    =& (\ad_{\p(x)}\t \p)r+(\p\t \ad_{\p(x)})r\\
    =& (\ad_{\p(x)}\, \p^2\t \p)r+(\p\t \ad_{\p(x)}\, \p^2)r\\
    =& (\ad_{\p(x)}\, \p\t \p)(\p\t\id)r+(\p\t \ad_{\p(x)}\, \p)(\id\t\p)r
    \end{split}
    \end{equation*}
    and
    \begin{equation*}
    \begin{split}
    (\p\t\p)\d(x)=&(\p\t\p)\ad_x r\\
    =& (\p\t\p)(\ad_x\t \p)r+(\p\t\p)(\p\t \ad_x)r\\
    =& (\ad_{\p(x)}\, \p\t \p^2)r+(\p^2\t \ad_{\p(x)}\, \p)r\\
    =& (\ad_{\p(x)}\, \p\t \p)(\id\t\p)r+(\p\t \ad_{\p(x)}\, \p)(\p\t\id)r.
    \end{split}
    \end{equation*}
    Thus we get
    \begin{equation*}
    \begin{split}
    \d(\p(x))-(\p\t\p)\d(x)=& (\ad_{\p(x)}\, \p\t \p-\p\t \ad_{\p(x)}\, \p)(\p\t\id-\id\t\p)r.
    \end{split}
    \end{equation*}

(\ref{it:inv}). For $x,y\in\g$, we have
    \begin{align*}
    (\p^2\t\id)\d(x)=& (\p^2\t\id)(\ad_{x}\t \p)r+(\p^2\t\id)(\p\t \ad_{x})r\\
    =& (\ad_{x}\t \p)r+(\p^3\t \ad_{x})r
    \end{align*}
    and
    \begin{align*}
\d(x)=&(\ad_x\t \p)r+(\p\t \ad_x)r.
    \end{align*}
Then we have
    \begin{align*}
    (\p^2\t\id)\d(x)-\d(x)=& (\p^3\t \ad_{x})r-(\p\t \ad_x)r= (\p^3\t \ad_{x})r-(\p\t \ad_x\, \p^2)r\\
    =& (\p\t \ad_x)(\p^2\t\id-\id\t\p^2)r\\
    =& (\p\t \ad_x)(\p\t\id+\id\t\p)(\p\t\id-\id\t\p)r.
    \end{align*}

(\ref{it:comp}).
For $x,y\in\g$, we have
    \begin{align*}
    \d[x,y]\xg=& [[x,y]\xg,r]\xg=\ad_{[x,y]\xg}r\\
    =& (\ad_{[x,y]\xg}\t \p)r+(\p\t \ad_{[x,y]\xg})r\\
    =& (\ad_{[x,y]\xg}\, \p^2\t \p)r+(\p\t \ad_{[x,y]\xg}\, \p^2)r\\
    =& (\ad_{[x,y]\xg}\, \p\t \p)(\p\t\id)r+(\p\t \ad_{[x,y]\xg}\, \p)(\id\t\p)r
    \end{align*}
    and
    \begin{align*}
    & \ad_{\p(x)}\d(y)-\ad_{\p(y)}\d(x)\\
    =& (\ad_{\p(x)}\t \p+\p\t \ad_{\p(x)})(\ad_y \t \p+\p\t\ad_y)r\\
    &- (\ad_{\p(y)}\t \p+\p\t \ad_{\p(y)})(\ad_x \t \p+\p\t\ad_x)r\\
    =& (\ad_{\p(x)}\, \ad_y\t \p^2)r-(\ad_{\p(y)}\, \ad_x\t \p^2)r+(\p^2\t\ad_{\p(x)}\, \ad_y)r\\
    &-(\p^2\t\ad_{\p(y)}\, \ad_x)r\\
    =& (\ad_{[x,y]\xg}\, \p\t \p^2)r+(\p^2\t\ad_{[x,y]\xg}\, \p)r\\
    =& (\ad_{[x,y]\xg}\, \p\t \p)(\id\t\p)r+(\p\t \ad_{[x,y]\xg}\, \p)(\p\t\id)r.
    \end{align*}
This gives the desired equation.
\end{proof}

Therefore, to give the definition of coboundary Hom-Lie bialgebras,
requiring $(\p\t\id)r=(\id\t\p)r$ is a natural choice. This leads us to the following definition.

\begin{defi}
    A {\bf coboundary Hom-Lie bialgebra} is a Hom-Lie bialgebra $(\g,\d)$ such that the linear map $\d:\g\rightarrow \g\otimes\g$ is given
    by Eq.~(\ref{definition of d}),
    where $r\in\g\otimes \g$ satisfies
    \begin{equation}\label{ra*=ar}
    (\p\t\id)r=(\id\t\p)r.
    \end{equation}
\end{defi}

For a Hom-Lie algebra \hlg\ and $r=\xqh{i}\x i\t\y i\in\g\otimes
\g$, define $[r,r]\xg\in\t^3\g$ by
\begin{equation*}
\begin{split}
[r,r]\xg:=&\xqh{i,j} ([\x i,\x j]\xg\t\p(\y i)\t\p(\y j)+\p(\x
i)\t[\y i,\x j]\xg\t\p(\y j)+\p(\x i)\t\p(\x j)\t[\y i,\y j]\xg)
\end{split}
\end{equation*}
and set $\sigma(r):=\xqh{i}\y i\t\x i$.

For any linear map $\d:\g\lr\g\otimes \g$ and any $x\in\g$, let
    \begin{equation*}
    \jac_{_{\d}}(x)=\xqh{c.p.}(\p\t\d)\d(x),
    \end{equation*}
where `$\xqh{c.p.}$' means that the sum is taken of the term after the
summation sign and together with the two similar terms obtained by cyclic
permutations of the factors in the tensor product $\t^3\g$. Since
$$\la [\ps(a),[b,c]_{\gs}]_{\gs},x\ra=\la a\t b\t c,(\p\t\d)\d(x)\ra,$$
it is clear that $[\cdot,\cdot]_{\gs}$ satisfies the Hom-Jacobi
identity if and only if $\jac_{_{\d}}$ is the zero map.

\begin{lem}
Let \hlg\ be a weakly involutive Hom-Lie algebra. Define a linear
map $\d:\g\lr\g\otimes \g$ by Eq. (\ref{definition of d}) with
some $r\in\g\otimes \g$ satisfying $(\p\t\id)r=(\id\t\p)r$ and
$[x,r+\sigma(r)]\xg=0$ for all $x\in\g$. Then
\begin{equation}
\jac_{_{\d}}(x)=\ad_{\p(x)}[r,r]\xg:=(\ad_{\p(x)}\otimes\p\otimes\p+
\p\otimes\ad_{\p(x)}\otimes
\p+\p\otimes\p\otimes \ad_{\p(x)})[r,r]_\g
\end{equation}
for all $x\in\g$.
\label{lem:jac}
\end{lem}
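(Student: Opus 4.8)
The plan is to establish the identity by a direct expansion of both sides in the components of $r$, reducing $\jac_{_\d}(x)$ and $\ad_{\p(x)}[r,r]\xg$ to sums of triple tensor products in $\t^3\g$ and matching them term by term. Throughout I write $r=\xqh{i}\x i\t\y i$ and use a second index $j$ for the two copies of $r$ that enter each expression. Before computing, I would record the simplifications coming from the hypotheses: weak involutivity gives $[\p^2(z),w]\xg=[z,w]\xg$, i.e. $\ad_{\p^2(z)}=\ad_z$, which normalises the spurious powers of $\p$; the $r$-matrix symmetry $(\p\t\id)r=(\id\t\p)r$ lets me slide a single $\p$ across the tensor, $\xqh{i}\p(\x i)\t\y i=\xqh{i}\x i\t\p(\y i)$; and, via Lemma~\ref{lem:delta}\,(\mref{it:alghom})--(\mref{it:inv}), this same symmetry forces $\d\,\p=(\p\t\p)\,\d$ and $(\p^2\t\id)\d=\d$, both of which I would use to keep the $\p$-bookkeeping under control.

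The invariance hypothesis $[x,r+\sigma(r)]\xg=0$ is the remaining ingredient. Expanding the action $[x,\cdot]\xg=(\p\t\adx+\adx\t\p)$ on $r+\sigma(r)$ and applying $\sigma$, this hypothesis is equivalent to the skew-symmetry $\d(x)=-\sigma(\d(x))$, and more usefully it says that whenever the twisted adjoint action $\adx$ is applied to $r$ it may be traded for $-\adx\sigma(r)$. This is the device that will eliminate the terms not of $[r,r]\xg$-type.

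Next I would expand $\jac_{_\d}(x)=\xqh{c.p.}(\p\t\d)\d(x)$. Starting from $\d(x)=\xqh{i}\big(\p(\x i)\t[x,\y i]\xg+[x,\x i]\xg\t\p(\y i)\big)$ and applying $\p\t\d$ produces four families of triple tensors carrying nested double brackets such as $[[x,\y i]\xg,\y j]\xg$ and $[\p(\y i),\y j]\xg$; the cyclic sum then yields twelve families. On each nested double bracket I would use weak involutivity to install the power of $\p$ on the outer slot needed to bring it into the form $[\p(\cdot),[\cdot,\cdot]\xg]\xg$ and then apply the Hom-Jacobi identity, which splits every such term into an ``$[r,r]\xg$-type'' term (one in which the surviving bracket matches a summand of the definition of $[r,r]\xg$, carrying $\p$ on the remaining two slots) and a cross term in which the outer bracket acts on a single component of a copy of $r$.

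In parallel I would expand $\ad_{\p(x)}[r,r]\xg$ straight from the definitions of $[r,r]\xg$ and of $\ad_{\p(x)}=\ad_{\p(x)}\t\p\t\p+\p\t\ad_{\p(x)}\t\p+\p\t\p\t\ad_{\p(x)}$; after normalising powers of $\p$ by weak involutivity this produces exactly the ``$[r,r]\xg$-type'' terms found above. The main obstacle is thus the accumulated cross terms: these do not vanish individually, and the hard part is to show they cancel after the cyclic summation. Here I would invoke the invariance hypothesis in the form $\adx r=-\adx\sigma(r)$ together with the symmetry $(\p\t\id)r=(\id\t\p)r$ to rewrite each cross term through its $\sigma$-transpose, whereupon they cancel in the cyclic sum. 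What remains is precisely $\ad_{\p(x)}[r,r]\xg$, giving the claim. The entire computation is mechanical but bookkeeping-heavy, so I would organise it to isolate the cross terms first and dispatch them by the invariance identity before matching the principal terms.
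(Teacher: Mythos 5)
Your overall strategy coincides with the paper's: both expand $\jac_{_{\d}}(x)$ and $\ad_{\p(x)}[r,r]\xg$ in the components of $r=\xqh{i}\x i\t\y i$, normalise the powers of $\p$ using weak involutivity and $(\p\t\id)r=(\id\t\p)r$, and then try to cancel the discrepancy using the Hom-Jacobi identity together with the invariance hypothesis written in component form, $\xqh{i}([x,\x i]\xg\t\p(\y i)+\p(\x i)\t[x,\y i]\xg)=-\xqh{i}([x,\y i]\xg\t\p(\x i)+\p(\y i)\t[x,\x i]\xg)$. Your preliminary reductions are also all correct: the hypothesis $[x,r+\sigma(r)]\xg=0$ is indeed equivalent to $\sigma(\d(x))=-\d(x)$, and Lemma~\ref{lem:delta} does give $\d\,\p=(\p\t\p)\,\d$ and $(\p^2\t\id)\d=\d$ under the symmetry condition on $r$.

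The gap is in the one sentence that carries the entire weight of the lemma: the claim that the cross terms, once rewritten through their $\sigma$-transposes, ``cancel in the cyclic sum.'' This is asserted, not proved, and it does not match how the cancellation actually goes. In the paper's computation the difference $\jac_{_{\d}}(x)-\ad_{\p(x)}[r,r]\xg$ is a sum of seventeen terms that do \emph{not} pair off under $\sigma$; instead one must (i) interchange the summation indices $i$ and $j$ in selected terms, (ii) apply the Hom-Jacobi identity to split them, (iii) invoke the invariance relation several times with \emph{varying first arguments} --- crucially including the substitution of $[x,\y i]\xg$ for $x$, which is legitimate only because the relation holds for every element of $\g$ --- and (iv) organise what survives in the form $\xqh{i}(\p^2(\x i)\t u_i+\p^2(\y i)\t v_i)$ and verify, by a further round of the same manipulations, that each $u_i$ and each $v_i$ vanishes. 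None of this is a transpose-pairing argument, and nothing in your outline produces these steps, so as written your plan stalls exactly where the real work begins. Since the lemma is purely computational, identifying the right ingredients while leaving the decisive cancellation to an unverified mechanism is not yet a proof.
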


\begin{proof}
Let $r=\xqh{i}\x i\t\y i$. The following computations make repeated
use of the Hom-Jacobi identity in $\g$, and $[x,r+\sigma(r)]\xg=0$
for all $x\in\g$ which is equivalent to
\begin{equation}\label{r with g-invariant symmetric part explicitly}
\xqh{i}([x,\x i]\xg\t\p(\y i)+\p(\x i)\t[x,\y i]\xg)=\xqh{i}(-[x,\y
i]\xg\t\p(\x i)-\p(\y i)\t[x,\x i]\xg).
\end{equation}

If we write out $\jac_{_{\d}}(x)$ explicitly, using the expression
$r=\xqh{i} \x i\t\y i$ and with a summation over repeated indices
understood, we have
\begin{equation}
\begin{split}
&\jac_{_{\d}}(x)\\
=&[\p(x),\p(\x i)]\xg\t[\p(\y i),\x j]\xg\t\p(\y j)+[\p(x),\p(\x i)]\xg\t\p(\x j)\t[\p(\y i),\y j]\xg\\
&+\p^2(\x i)\t[[x,\y i]\xg,\x j]\xg\t\p(\y j)+\p^2(\x i)\t\p(\x j)\t[[x,\y i]\xg,\y j]\xg\\
&+\p(\y j)\t[\p(x),\p(\x i)]\xg\t[\p(\y i),\x j]\xg+[\p(\y i),\y j]\xg\t[\p(x),\p(\x i)]\xg\t\p(\x j)\\
&+\p(\y j)\t\p^2(\x i)\t[[x,\y i]\xg,\x j]\xg+[[x,\y i]\xg,\y j]\xg\t\p^2(\x i)\t\p(\x j)\\
&+[\p(\y i),\x j]\xg\t\p(\y j)\t[\p(x),\p(\x i)]\xg+\p(\x j)\t[\p(\y i),\y j]\xg\t[\p(x),\p(\x i)]\xg\\
&+[[x,\y i]\xg,\x j]\xg\t\p(\y j)\t\p^2(\x i)+\p(\x j)\t[[x,\y i]\xg,\y j]\xg\t\p^2(\x i).\\
\end{split}
\label{Jac explicitly}
\end{equation}
Since \hlg\ is a weakly involutive Hom-Lie algebra and
$(\p\t\id)r=(\id\t\p)r$, the first term in Eq.~(\ref{Jac explicitly})
becomes
\begin{align*}
[\p(x),\p(\x i)]\xg\t[\p(\y i),\x j]\xg\t\p(\y j)=&[\p(x),\p(\x i)]\xg\t[\p(\y i),\p^2(\x j)]\xg\t\p(\y j)\\
=&[\p(x),\p(\x i)]\xg\t[\p(\y i),\p(\x j)]\xg\t\p^2(\y j).
\end{align*}
Doing the same to other terms in Eq.~(\ref{Jac explicitly}), we obtain
\begin{align*}
&\jac_{_{\d}}(x)\\
=&[\p(x),\p(\x i)]\xg\t[\p(\y i),\p(\x j)]\xg\t\p^2(\y j)+[\p(x),\p(\x i)]\xg\t\p^2(\x j)\t[\p(\y i),\p(\y j)]\xg\\
&+\p^2(\x i)\t[[x,\y i]\xg,\p(\x j)]\xg\t\p^2(\y j)+\p^2(\x i)\t\p^2(\x j)\t[[x,\y i]\xg,\p(\y j)]\xg\\
&+\p^2(\y j)\t[\p(x),\p(\x i)]\xg\t[\p(\y i),\p(\x j)]\xg+[\p(\y i),\p(\y j)]\xg\t[\p(x),\p(\x i)]\xg\t\p^2(\x j)\\
&+\p^2(\y j)\t\p^2(\x i)\t[[x,\y i]\xg,\p(\x j)]\xg+[[x,\y i]\xg,\p(\y j)]\xg\t\p^2(\x i)\t\p^2(\x j)\\
&+[\p(\y i),\p(\x j)]\xg\t\p^2(\y j)\t[\p(x),\p(\x i)]\xg+\p^2(\x j)\t[\p(\y i),\p(\y j)]\xg\t[\p(x),\p(\x i)]\xg\\
&+[[x,\y i]\xg,\p(\x j)]\xg\t\p^2(\y j)\t\p^2(\x i)+\p^2(\x j)\t[[x,\y i]\xg,\p(\y j)]\xg\t\p^2(\x i).
\end{align*}

Then we write out $\jac_{_{\d}}(x)-\ad_{\p(x)}[r,r]\xg$ explicitly. Note that $\jac_{_{\d}}(x)$ is a sum of twelve terms and that
$\ad_{\p(x)}[r,r]\xg$ is a sum of nine terms, but two terms appear in both sums and hence are canceled. Thus $\jac_{_{\d}}(x)-\ad_{\p(x)}[r,r]\xg$ is a sum of
seventeen terms. After rearranging the terms suitably, we obtain
\begin{align*}
&\jac_{_{\d}}(x)-\ad_{\p(x)}[r,r]\xg\\
=& [[x,\y i]\xg,\p(\x j)]\xg\t\p^2(\y j)\t\p^2(\x i)-[\p(x),[\x i,\x j]\xg]\xg\t\p^2(\y i)\t\p^2(\y j)\\
&+[[x,\y i]\xg,\p(\y j)]\xg\t\p^2(\x i)\t\p^2(\x j)+[\p(\y i),\p(\y j)]\xg\t[\p(x),\p(\x i)]\xg\t\p^2(\x j)\\
&-\p[\x i,\x j]\xg\t[\p(x),\p(\y i)]\xg\t\p^2(\y j)\\
&+[\p(\y i),\p(\x j)]\xg\t\p^2(\y j)\t[\p(x),\p(\x i)]\xg-\p[\x i,\x j]\xg\t\p^2(\y i)\t[\p(x),\p(\y j)]\xg\\
&+\p^2(\x i)\t[[x,\y i]\xg,\p(\x j)]\xg\t\p^2(\y j)+\p^2(\x i)\t\p^2(\x j)\t[[x,\y i]\xg,\p(\y j)]\xg\\
&+\p^2(\y j)\t[\p(x),\p(\x i)]\xg\t[\p(\y i),\p(\x j)]\xg+\p^2(\y j)\t\p^2(\x i)\t[[x,\y i]\xg,\p(\x j)]\xg\\
&+\p^2(\x j)\t[\p(\y i),\p(\y j)]\xg\t[\p(x),\p(\x i)]\xg+\p^2(\x j)\t[[x,\y i]\xg,\p(\y j)]\xg\t\p^2(\x i)\\
&-\p^2(\x i)\t[\p(x),[\y i,\x j]\xg]\xg\t\p^2(\y j)-\p^2(\x i)\t\p[\y i,\x j]\xg\t[\p(x),\p(\y i)]\xg\\
&-\p^2(\x i)\t[\p(x),\p(\x j)]\xg\t\p[\y i,\y j]\xg-\p^2(\x
i)\t\p^2(\x j)\t[\p(x),[\y i,\y j]\xg]\xg.
\end{align*}
Interchanging the indices $i$ and $j$ in the first term and using
the Hom-Jacobi identity in $\g$, the first term becomes
\begin{align*}
-[\p(x),[\x i,\y j]\xg]\xg\t\p^2(\y i)\t\p^2(\x j)-[\p(\y j),[x,\x
i]\xg]\xg\t\p^2(\y i)\t\p^2(\x j).
\end{align*}
The sum of $-[\p(\y j),[x,\x i]\xg]\xg\t\p^2(\y i)\t\p^2(\x j)$ and
the third and fourth terms is
\begin{align*}
&(\ad_{\p(\y j)}\t\p)(-[x,\x i]\xg\t\p(\y i)-[x,\y i]\xg\t\p(\x i)-\p(\y i)\t[x,\x i]\xg)\t\p^2(\x j)\\
=&(\ad_{\p(\y j)}\t\p)(\p(\x i)\t[x,\y i]\xg)\t\p^2(\x j)\\
=& -\p[\x i,\y j]\t\p[x,\y i]\xg\t\p^2(\x j),
\end{align*}
using the Eq. (\ref{r with g-invariant symmetric part explicitly}).

Similarly, the sum of $-\p[\x i,\y j]\t\p[x,\y i]\xg\t\p^2(\x j)$
and the fifth term becomes
\begin{align*}
\p^2(\y j)\t\p[x,\y i]\xg\t\p[\x i,\x j]\xg+\p^2(\x j)\t\p[x,\y
i]\xg\t\p[\x i,\y j]\xg,
\end{align*}
and the sum of the sixth and seventh terms is
\begin{align*}
\p^2(\x j)\t\p^2(\y i)\t[\p(x),[\x i,\y j]\xg]\xg+\p^2(\y
j)\t\p^2(\y i)\t[\p(x),[\x i,\x j]\xg]\xg.
\end{align*}

Finally, the sum of $-[\p(\y j),[x,\x i]\xg]\xg\t\p^2(\y
i)\t\p^2(\x j)$ and the second term in the sum of the expression of $\jac_{_{\d}}(x)-\ad_{\p(x)}[r,r]\xg$ becomes
\begin{align*}
&[\p(x),\p(\x j)]\xg\t\p^2(\y i)\t[\p(\x i),\p(\y j)]\xg+[\p(x),\p(\y j)]\xg\t\p^2(\y i)\t[\p(\x i),\p(\x j)]\xg\\
=& -\p^2(\y j)\t\p^2(\y i)\t[\p(\x i),[x,\x j]\xg]\xg-\p^2(\x
j)\t\p^2(\y i)\t[\p(\x i),[x,\y j]\xg]\xg.
\end{align*}

Inserting these results, we find that the expression of
$\jac_{_{\d}}(x)-\ad_{\p(x)}[r,r]\xg$ can be written in the form
$\xqh{i}(\p^2(\x i)\t u_i+\p^2(\y i)\t v_i)$ for certain $u_i,v_i\in\g$.
In fact,
\begin{align*}
u_i=& [\p(\y j),\p(\y i)]\xg\t[\p(x),\p(\x j)]\xg-[\p(\y i),\p(\x j)]\xg\t[\p(x),\p(\y j)]\xg\\
&+\p^2(\y j)\t[\p(x),[\x j,\y i]\xg]\xg-\p^2(\x j)\t[\p(x),[\y i,\y j]\xg]\xg\\
&+\p[x,\y j]\xg\t\p[\x j,\y i]\xg-[\p(x),\p(\x j)]\xg\t[\p(\y i),\p(\y j)]\xg-[\p(x),[\y i,\x j]\xg]\xg\t\p^2(\y j)\\
&+[[x,\y j]\xg,\p(\y i)]\xg\t\p^2(\x j)\\
&+[[x,\y i]\xg,\p(\x j)]\xg\t\p^2(\y j)+\p^2(\x j)\t[[x,\y
i]\xg,\p(\y j)]\xg+\p^2(\y j)\t[[x,\y i]\xg,\p(\x j)]\xg.
\end{align*}
On the right-hand side, the sum of the first four terms is zero by
Eq. (\ref{r with g-invariant symmetric part explicitly}), and that
of the next three terms becomes
$$[\p(x),[\y i,\y j]\xg]\xg\t\p^2(\x j).$$
By the Hom-Jacobi identity in $\g$, the sum of $[\p(x),[\y i,\y
j]\xg]\xg\t\p^2(\x j)$ and the eighth term is
$$[[x,\y i]\xg,\p(\y j)]\xg\t\p^2(\x j).$$
Finally, the sum of $[[x,\y i]\xg,\p(\y j)]\xg\t\p^2(\x j)$ and
the last three terms becomes
\begin{align*}
&[[x,\y i]\xg,\x j]\xg\t\p(\y j)+\p(\x j)\t[[x,\y i]\xg,\y j]\xg+\p(\y j)\t[[x,\y i]\xg,\x j]\xg+[[x,\y i]\xg,\y j]\xg\t\p(\x j)\\
&= 0,
\end{align*}
if we replace $x$ in Eq.~(\ref{r with g-invariant symmetric part
explicitly}) by $[x,\y i]\xg$. Hence, we get $u_i=0$. A similar, but
shorter, argument proves that
\begin{align*}
v_i=& [\p(x),\p(\x j)]\xg\t[\p(\y j),\p(\x i)]\xg+\p[x,\y j]\xg\t\p[\x j,\x i]\xg+\p^2(\x j)\t[[x,\y
j]\xg,\p(\x i)]\xg\\
&+\p^2(\y j)\t[[x,\x i]\xg,\p(\x j)]\xg+\p^2(\y j)\t[\p(x),[\x j,\x i]\xg]\xg\\
=&0.
\end{align*}
Hence the conclusion holds.
\end{proof}

\begin{thm}\label{main theorem}
    Let \hlg\ be a weakly involutive Hom-Lie algebra. Define a bilinear map $[\cdot,\cdot]_{\gs}:\gs\otimes \gs\lr\gs$ by Eq.~(\ref{dual bracket}), where
     $\d$ is defined by Eq.~(\ref{definition of d}) with some $r\in\g\otimes \g$ satisfying $(\p\t\id)r=(\id\t\p)r$. Then \hlgs\ is a weakly involutive Hom-Lie algebra if and only if the following conditions are satisfied:
    \begin{itemize}
        \item[(i)]  $[x,r+\sigma(r)]\xg=0$ for all $x\in\g$,
        \item[(ii)]  $\ad_{\p(x)}[r,r]\xg=0$ for all $x\in\g$.
    \end{itemize}
    Under these conditions, $(\g,\gs)$ is a coboundary Hom-Lie bialgebra.
\end{thm}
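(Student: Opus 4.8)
The plan is to verify directly that $\hlgs$ satisfies each defining axiom of a weakly involutive Hom-Lie algebra, translating every axiom into a statement about $\d$ by means of the pairing in Eq.~(\ref{dual bracket}), and then to read off the conclusions from Lemma~\ref{lem:delta} and Lemma~\ref{lem:jac}. The organizing observation is that the hypothesis Eq.~(\ref{ra*=ar}), namely $(\p\t\id)r=(\id\t\p)r$, is precisely the vanishing of the common factor $(\p\t\id-\id\t\p)r$ that appears on the right-hand side of all three items of Lemma~\ref{lem:delta}. Consequently, two of the four requirements come for free, and the content of the theorem is concentrated in the remaining two, which are governed exactly by conditions (i) and (ii).

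First I would treat skew-symmetry, the one axiom not directly supplied by a preceding lemma. Writing $r=\xqh{i}\x i\t\y i$ and letting $\sigma$ denote the flip on $\t^2\g$, a short computation using $\sigma\,(\p\t\ad_x)=(\ad_x\t\p)\,\sigma$ gives $\sigma(\d(x))=\ad_x\sigma(r)$, hence
\begin{equation*}
\d(x)+\sigma(\d(x))=\ad_x(r+\sigma(r))=[x,r+\sigma(r)]\xg.
\end{equation*}
Since Eq.~(\ref{dual bracket}) defines $[a,b]_{\gs}$ through $\la[a,b]_{\gs},x\ra=\la\d(x),a\t b\ra$, the bracket $[\cdot,\cdot]_{\gs}$ is skew-symmetric for all $a,b$ if and only if $\d(x)$ lies in the skew-symmetric part of $\t^2\g$ for every $x$, which by the displayed identity is equivalent to condition (i). Next, pairing with $x\in\g$ turns the homomorphism property $\ps([a,b]_{\gs})=[\ps(a),\ps(b)]_{\gs}$ into $\d(\p(x))=(\p\t\p)\d(x)$ and the weakly involutive identity $[{\ps}^2(a),b]_{\gs}=[a,b]_{\gs}$ into $(\p^2\t\id)\d(x)=\d(x)$; by items (\ref{it:alghom}) and (\ref{it:inv}) of Lemma~\ref{lem:delta} together with Eq.~(\ref{ra*=ar}), both hold automatically.

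It remains to handle the Hom-Jacobi identity for $[\cdot,\cdot]_{\gs}$, which, as noted before Lemma~\ref{lem:jac}, is equivalent to $\jac_{_{\d}}=0$. Here the logical order matters: Lemma~\ref{lem:jac} presupposes condition (i), so I would prove the equivalence in two directions. If (i) and (ii) hold, then skew-symmetry, the homomorphism property and weak involutivity are already established, and Lemma~\ref{lem:jac} gives $\jac_{_{\d}}(x)=\ad_{\p(x)}[r,r]\xg=0$ by (ii), so $\hlgs$ is a weakly involutive Hom-Lie algebra. Conversely, if $\hlgs$ is such an algebra, then skew-symmetry forces (i); with (i) in hand we may invoke Lemma~\ref{lem:jac} to obtain $\ad_{\p(x)}[r,r]\xg=\jac_{_{\d}}(x)=0$, which is (ii). I expect this interdependence---needing (i) in place before Lemma~\ref{lem:jac} can be applied---to be the only genuinely delicate point, the remainder being bookkeeping, since the heavy computation already resides in the two lemmas.

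Finally, for the coboundary bialgebra claim, note that the pairing Eq.~(\ref{dual bracket}) is exactly the defining relation Eq.~(\ref{eq:dual}) for the dual map, so $\d=\d_\g=\Delta$. Under (i) and (ii) both $\hlg$ and $\hlgs$ are weakly involutive Hom-Lie algebras, and item (\ref{it:comp}) of Lemma~\ref{lem:delta} with Eq.~(\ref{ra*=ar}) yields the compatibility Eq.~(\ref{Hom-Lie-bi-c1}); hence $(\g,\gs)$ is a Hom-Lie bialgebra by Definition~\ref{defi Hom-Lie bialgebra}. Since $\d$ is given by Eq.~(\ref{definition of d}) with $r$ satisfying Eq.~(\ref{ra*=ar}), it is moreover a coboundary Hom-Lie bialgebra.
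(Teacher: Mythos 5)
Your proposal is correct and follows essentially the same route as the paper's own proof: Lemma~\ref{lem:delta} with Eq.~(\ref{ra*=ar}) disposes of the homomorphism property, the weakly involutive condition and the bialgebra compatibility, while skew-symmetry is matched with condition (i) and the Hom-Jacobi identity with condition (ii) via Lemma~\ref{lem:jac}. In fact you supply two details the paper leaves implicit---the explicit identity $\d(x)+\sigma(\d(x))=[x,r+\sigma(r)]\xg$ behind the ``straightforward'' skew-symmetry claim, and the observation that Lemma~\ref{lem:jac} presupposes (i), so the two directions of the equivalence must be ordered as you do---both of which strengthen rather than alter the argument.
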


\begin{proof}
    The bracket $[\cdot,\cdot]_{\gs}$ is determined by the cobracket $\d(x)=[x,r]\xg$. Note that $(\p\t\id)r=(\id\t\p)r$. Applying Lemma~\ref{lem:delta}, we find that $\ps$ is an algebra
    homomorphism with respect to $[\cdot,\cdot]_{\gs}$ and $[{\ps}^2(a),b]_{\gs}=[a,b]_{\gs}$. Hence \hlgs\ is a weakly involutive Hom-Lie algebra if and only if $[\cdot,\cdot]_{\gs}$ is skew-symmetric and satisfies the Hom-Jacobi identity.

    The proof that $r$ satisfies (i) if and only if $[\cdot,\cdot]_{\gs}$ is skew-symmetric is straightforward. The proof that (ii) is equivalent to the condition that $[\cdot,\cdot]_{\gs}$ satisfies the Hom-Jacobi identity follows from Lemma~\ref{lem:jac}.

    Since $\d(x)=[x,r]\xg$ and $(\p\t\id)r=(\id\t\p)r$, by Lemma \ref{lem:delta}.\ref{it:comp}, the compatibility conditions for a Hom-Lie bialgebra in Definition \ref{defi Hom-Lie bialgebra} hold naturally. Therefore the conclusion follows.
\end{proof}

\begin{rem}
An easy way to satisfy conditions (i) and $(\i\i)$ in
Theorem \ref{main theorem} is to assume that $r$ is skew-symmetric and
\begin{equation}\label{Hom-Yang-Baxter equation}
[r,r]\xg=0
\end{equation}
respectively. Eq.~(\ref{Hom-Yang-Baxter equation}) is the {\bf classical Hom-Yang-Baxter equation}. A {\bf
quasitriangular Hom-Lie bialgebra} is a coboundary Hom-Lie
bialgebra, in which $r$ is a solution of the classical
Hom-Yang-Baxter equation. A {\bf triangular Hom-Lie bialgebra} is a
coboundary Hom-Lie bialgebra, in which $r$ is a skew-symmetric
solution of the classical Hom-Yang-Baxter equation.
\end{rem}

\begin{pro}
    Let $(\g,\gs)$ be a Hom-Lie bialgebra. Then there is a canonical Hom-Lie bialgebra structure on the direct sum $\g\oplus \gs$ of the underlying vector spaces of $\g$ and $\gs$ such that $\p:\g\hookrightarrow \g\oplus \gs$ and $\p^*:\gs\hookrightarrow \g\oplus \gs$ into the two summands are homomorphisms of Hom-Lie bialgebras. Here the Hom-Lie bialgebra structure on $\gs$ is $(\gs,-\d_{\g^*})$, where $\d_{\g^*}$ is given by Eq.~(\ref{eq:dual}).
\end{pro}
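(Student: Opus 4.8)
The plan is to realize $\g\oplus\gs$ as a \emph{coboundary} (in fact quasitriangular) Hom-Lie bialgebra via the canonical $r$-matrix, and then to identify the resulting cobracket on the two summands. Throughout write $\k:=\g\oplus\gs$, $\pd:=\p\oplus\ps$, and let $[\cdot,\cdot]_d$ be the bracket of Eq.~(\ref{d-bracket}). Since $(\g,\gs)$ is a Hom-Lie bialgebra, Corollary~\ref{bialgebra, weakly} shows $(\k,[\cdot,\cdot]_d,\pd)$ is a weakly involutive Hom-Lie algebra, and by Theorem~\ref{thm:equiv} it carries the nondegenerate symmetric invariant form $\bb$ of Eq.~(\ref{bilinear form}). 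Fix a basis $\{e_i\}$ of $\g$ with dual basis $\{e^i\}$ of $\gs$, and set $r:=\xqh{i}e_i\t e^i\in\g\t\gs\subset\k\t\k$; then $r+\sigma(r)=\xqh{i}(e_i\t e^i+e^i\t e_i)$ is exactly the Casimir element of $\bb$ on $\k$, and in particular $r$ is not skew-symmetric. A short pairing computation using $\la\ps(a),x\ra=\la a,\p(x)\ra$ and $\xqh{i}\la e^i,w\ra e_i=w$ gives $(\pd\t\id)r=(\id\t\pd)r$, which is the condition in Eq.~(\ref{ra*=ar}).

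My intention is then to verify the two hypotheses of Theorem~\ref{main theorem} for this $r$ in $\k$. For condition (i), namely $[z,r+\sigma(r)]_d=0$ for all $z\in\k$, I would pair $[z,r+\sigma(r)]_d$ against an arbitrary $w_1\t w_2$ using $\bb\t\bb$ and push the inner bracket onto $z$ by the invariance $\bb([a,b]_d,c)=\bb(a,[\pd(b),c]_d)$. Writing the Casimir as $\xqh{\alpha}u_\alpha\t u^\alpha$ in $\bb$-dual bases and using $\bb(\pd(a),b)=\bb(a,\pd(b))$ together with the reconstruction identities $\xqh{\alpha}\bb(u_\alpha,v)u^\alpha=\xqh{\alpha}\bb(u^\alpha,v)u_\alpha=v$, the two resulting terms collapse to $\bb\big(z,[\pd^2(w_1),w_2]_d+[\pd^2(w_2),w_1]_d\big)$. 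This vanishes because $\k$ is weakly involutive, so $[\pd^2(w),w']_d=[w,w']_d$, and $[\cdot,\cdot]_d$ is skew-symmetric; hence the Casimir is adjoint-invariant and (i) holds. For condition (ii) I would prove the stronger statement that $r$ solves the classical Hom-Yang-Baxter equation, $[r,r]_d=0$.

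The hard part will be this last step. Expanding $[r,r]_d$ with the mixed bracket~(\ref{d-bracket}) and the identifications $[e^i,e_j]_d=\dao_{e^i}(e_j)-\ado_{e_j}(e^i)$, $[e_i,e_j]_d=[e_i,e_j]\xg$, $[e^i,e^j]_d=[e^i,e^j]_{\gs}$ splits $[r,r]_d$ into a component in $\g\t\g\t\gs$ and a component in $\g\t\gs\t\gs$. Each component must be shown to vanish separately, and this is the one genuinely computational point: the $\g\t\g\t\gs$ part reduces to the matched-pair compatibility~(\ref{mp2}) for $\dao$, and the $\g\t\gs\t\gs$ part to~(\ref{mp1}) for $\ado$, where all the twists $\p,\ps$ and the weak-involutivity relations $[\p^2(x),y]\xg=[x,y]\xg$, $[\ps^2(a),b]_{\gs}=[a,b]_{\gs}$ must be tracked carefully. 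With (i) and (ii) in hand, Theorem~\ref{main theorem} yields that $(\k,\d_d)$, with $\d_d(z):=[z,r]_d=(\pd\t\ad^d_z+\ad^d_z\t\pd)r$, is a coboundary (indeed quasitriangular) Hom-Lie bialgebra.

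It remains to identify $\d_d$ on the two summands and to check the homomorphism property. For $x\in\g$, expanding $\d_d(x)=\xqh{i}\big(\p(e_i)\t[x,e^i]_d+[x,e_i]\xg\t\ps(e^i)\big)$ with $[x,e^i]_d=\ado_x(e^i)-\dao_{e^i}(x)$, I would show the two $\g\t\gs$-valued contributions cancel, using $\p[\p(x),v]\xg=[\p^2(x),\p(v)]\xg=[x,\p(v)]\xg$, while the $\g\t\g$-valued contribution $-\xqh{i}\p(e_i)\t\dao_{e^i}(x)$ evaluates, via $\xqh{i}\la\ps(a),e_i\ra e^i=\ps(a)$ and $[\ps^2(a),b]_{\gs}=[a,b]_{\gs}$, to the element satisfying $\la\d_d(x),a\t b\ra=\la x,[a,b]_{\gs}\ra$; that is $\d_d(x)=\Delta(x)\in\g\t\g$. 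A symmetric computation gives $\d_d(a)=-\d_{\gs}(a)\in\gs\t\gs$ for $a\in\gs$, which is exactly the cobracket of $(\gs,-\d_{\gs})$. Finally, the maps $x\mapsto(\p(x),0)$ and $a\mapsto(0,\ps(a))$ are Hom-Lie algebra homomorphisms by direct check, and the coalgebra condition of Definition~\ref{defi. of homomorphism of Hom-Lie bialgebras} follows from Lemma~\ref{lem:delta}(\ref{it:alghom}) applied in $\k$, which gives $\d_d(\pd(z))=(\pd\t\pd)\d_d(z)$: thus $\d_d(\p(x),0)=(\p\t\p)\Delta(x)$ and $\d_d(0,\ps(a))=-(\ps\t\ps)\d_{\gs}(a)$, matching $(f\t f)\Delta$ and $(f\t f)(-\d_{\gs})$ respectively. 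This completes the verification that $\g\oplus\gs$ is the asserted canonical Hom-Lie bialgebra.
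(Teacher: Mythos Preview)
Your approach is essentially the same as the paper's: take the canonical $r=\sum_i e_i\otimes e^i\in\g\otimes\gs\subset\k\otimes\k$, verify $(\pd\otimes\id)r=(\id\otimes\pd)r$, check the two hypotheses of Theorem~\ref{main theorem} (with $[r,r]_d=0$ so the result is in fact quasitriangular), and then identify the induced cobracket on the summands to obtain the homomorphism statements.

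Two minor remarks on the details. First, your argument for condition~(i) via the Casimir element and the invariance of $\bb$ is cleaner than what the paper does; the paper simply writes ``similarly, we prove that $[u,r+\sigma(r)]_{_{\mathcal{HD}(\g)}}=0$'' after computing $[r,r]=0$, without spelling it out. Second, your anticipation that the vanishing of $[r,r]_d$ will reduce to the full matched-pair compatibilities~(\ref{mp1}) and~(\ref{mp2}) is a slight overestimate: when you actually pair the $\g\otimes\g\otimes\gs$ component against $a\otimes b\otimes z$ you get $-\la\p(z),[a,b]_{\gs}\ra+\la[a,b]_{\gs},\p(z)\ra=0$ using only the weak-involutivity identity $[(\ps)^2a,b]_{\gs}=[a,b]_{\gs}$ and that $\ps$ is a bracket homomorphism, and symmetrically for the $\g\otimes\gs\otimes\gs$ component. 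The paper does exactly this, as a single direct pairing computation against a general element of $\k^{\otimes 3}$. Finally, your route to the homomorphism property (first show $\d_d|_\g=\Delta$ and $\d_d|_{\gs}=-\d_{\gs}$, then apply Lemma~\ref{lem:delta}(\ref{it:alghom})) is a mild reorganization of the paper's direct verification $\d_{\mathcal{HD}}(\p(e_i))=(\p\otimes\p)\Delta(e_i)$; both are correct.
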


\begin{proof}
    Let $r\in \g\t \gs\subset (\g\oplus \gs)\t(\g\oplus \gs)$ correspond to the identity map $\mathrm{id}:\g\longrightarrow \g$. Let $\{e_1,\ldots,e_n\}$ be a basis of $\g$ and $\{f_1,\ldots,f_n\}$ be its dual basis. Then $r=\sum\limits_{i} e_i\t f_i$. We denote $\g\oplus\gs$ by $\mathcal{HD}(\g)$, and $\p\oplus\ps$ by $\phi_{_\mathcal{HD}}$. By Corollary \ref{bialgebra, weakly}, there is a weakly involutive Hom-Lie algebra structure $(\mathcal{HD}(\g),[\cdot,\cdot]_{_{\mathcal{HD}(\g)}},\phi_{_\mathcal{HD}})$ on $\mathcal{HD}(\g)$.  Since
    \begin{equation*}
    \la \sum\limits_{i}\p(e_i)\t f_i,f_s\t e_t\ra=\la \p(e_t) ,f_s\ra=\la e_t,\p^*(f_s)\ra,
    \end{equation*}
    \begin{equation*}
    \la \sum\limits_{i}e_i\t \p^*(f_i),f_s\t e_t\ra=\la e_t,\p^*(f_s)\ra,
    \end{equation*}
    we have $\sum\limits_{i}\p(e_i)\t f_i=\sum\limits_{i}e_i\t \p^*(f_i)$. Note that $\phi_{_\mathcal{HD}}(x+a)=\p(x)+\p^*(a)$ for all $x\in \g$, $a\in \gs$. We
    get $(\phi_{_\mathcal{HD}}\t \mathrm{id})r=(\mathrm{id}\t\phi_{_\mathcal{HD}})r$. Set
    $$  \d_{\mathcal{HD}}(u)=[u,r]_{_{\mathcal{HD}(\g)}},\ \forall\  u\in\mathcal{HD}(\g).$$

    Since
    \begin{align*}
    & \la [r,r]_{_{\mathcal{HD}(\g)}},(e_s+f_t)\t(e_k+f_l)\t(e_p+f_q) \ra\\
    =& \xqh{i,j} \la  [e_i,e_j]_{_{\mathcal{HD}(\g)}}\t\phi_{_{\mathcal{HD}}}(f_i)\t\phi_{_{\mathcal{HD}}}(f_j)+\phi_{_{\mathcal{HD}}}(e_i)\t[f_i,e_j]_{_{\mathcal{HD}(\g)}}\t\phi_{_{\mathcal{HD}}}(f_j)\\
    & +\phi_{_{\mathcal{HD}}}(e_i)\t\phi_{_{\mathcal{HD}}}(e_j)\t[f_i,f_j]_{_{\mathcal{HD}(\g)}},(e_s+f_t)\t(e_k+f_l)\t(e_p+f_q) \ra\\
    =& \xqh{i,j} \la [e_i,e_j]\xg\t\p^*(f_i)\t\p^*(f_j)+\p(e_i)\t (\dao_{f_i} (e_j)-\ado_{e_j} (f_i) ) \t\p^*(f_j)\\
    & +\p(e_i)\t\p(e_j)\t[f_i,f_j]_{\gs},(e_s+f_t)\t(e_k+f_l)\t(e_p+f_q) \ra\\
    =& \xqh{i,j} (\la [e_i,e_j]\xg,f_t\ra \la \p^*(f_i),e_k\ra \la \p^*(f_j),e_p\ra+\la \p(e_i),f_t\ra \la e_j,-[\p^*f_i,f_l]_{\gs}\ra \la\p^*(f_j),e_p\ra\\
    &-\la \p(e_i),f_t\ra \la f_i,-[\p(e_j),e_k]\xg\ra \la\p^*(f_j),e_p\ra+\la \p(e_i),f_t\ra \la\p(e_j),f_l\ra \la [f_i,f_j]_{\gs},e_p\ra)\\
    =& \la [\p (e_k),\p(e_p)]\xg,f_t\ra -\la \p(e_p),[(\p^*)^2 f_t,f_l]_{\gs}\ra\\
    &+\la \p^*(f_t),[\p^2 (e_p),e_k]\xg\ra+\la [\p^*(f_t),\p^*(f_l)]_{\gs},e_p\ra
    =0,
    \end{align*}
    we get $[r,r]_{_{\mathcal{HD}(\g)}}=0$. Similarly, we prove that $[u,r+\sigma(r)]_{_{\mathcal{HD}(\g)}}=0$ for all $u\in\mathcal{HD}(\g)$.  Hence $\mathcal{HD}(\g)$ is a quasitriangular Hom-Lie bialgebra by Theorem \ref{main theorem}.

    For $e_i\in \g$, we have
    \begin{align*}
    & \la \d_{\mathcal{HD}}(\p(e_i)),(e_s+f_t)\t(e_k+f_l)\ra  \\
    =& \sum\limits_{j} \la [\p(e_i),e_j]_{_{\mathcal{HD}(\g)}}\t \phi_{_{\mathcal{HD}}}(f_j)+ \phi_{_{\mathcal{HD}}}(e_j)\t [\p(e_i),f_j]_{_{\mathcal{HD}(\g)}},(e_s+f_t)\t(e_k+f_l)\ra \\
    =& \sum\limits_{j} (\la [\p(e_i),e_j]\xg,f_t\ra \la\p^*(f_j),e_k\ra+ \la \p(e_j),f_t\ra \la f_j,-[\p^2(e_i),e_k]\xg\ra\\
    & -\la\p(e_j),f_t\ra \la \p(e_i),-[\p^*(f_j),f_l]_{\gs}\ra)\\
    =& \la [\p(e_i),\p(e_k)]\xg,f_t\ra-\la \p^*(f_t),[e_i,e_k]\xg\ra+\la \p(e_i),[(\p^*)^2 (f_t),f_l]_{\gs}\ra  \\
    =& \la e_i,[\p^*(f_t),\p^*(f_l)]_{\gs}\ra\\
    =& \la (\p\t\p)\d(e_i),(e_s+f_t)\t(e_k+f_l)\ra.
    \end{align*}
    So $\d_{\mathcal{HD}}(\p(e_i))=(\p\t\p)\d(e_i)$. Therefore $\p:\g\hookrightarrow \g\oplus \gs$ is a homomorphism of Hom-Lie bialgebras. Similarly, $\p^*:\gs\hookrightarrow \g\oplus \gs$ is also a homomorphism of Hom-Lie bialgebras since $\d_{\mathcal{HD}}(\p^*(f_i))=(\p^*\t\p^*)(-\d_{\gs})(f_i)$.
\end{proof}

\begin{rem}
With the above Hom-Lie bialgebra structure, $\g\oplus \gs$ is called the {\bf Hom-double} of $\g$, and is denoted by $\mathcal{HD}(\g)$.
\end{rem}

For any $r\in\g\otimes \g$, the induced linear map $\rl:\gs\lr\g$
is defined by
$$\la \rl(a),b\ra=\la r,a\t b\ra.$$
Then it is easy to see that $(\p\t\id)r=(\id\t\p)r$ is equivalent to
$$\p\, \rl=\rl\, \ps.$$
Setting $r_{21}=\sigma(r)$, we have

\begin{pro}\label{pp:dual bracket}
    Let \hlg\ be a weakly involutive Hom-Lie algebra and $r\in\g\otimes \g$ such that $\p\, \rl=\rl\, \ps$. Define linear maps $\d:\g\lr\g\otimes \g$ by Eq.~(\ref{definition of d}) and $[\cdot,\cdot]_{\gs}:\gs\otimes \gs\lr\gs$ by Eq.~(\ref{dual bracket}). Then we have
    \begin{equation}
    [a,b]_{\gs}=\ado_{\rl(a)} b+\ado_{\srl(b)} a.
    \end{equation}
\end{pro}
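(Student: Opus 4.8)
The plan is to prove the identity by pairing both sides against an arbitrary $x\in\g$ and invoking nondegeneracy of the canonical pairing between $\g$ and $\gs$; thus it suffices to check
\[
\la [a,b]_{\gs}, x\ra=\la \ado_{\rl(a)} b+\ado_{\srl(b)} a, x\ra,\qquad \forall x\in\g,\ a,b\in\gs.
\]
Writing $r=\xqh{i}\x i\t\y i$, I would first expand the left-hand side using Eqs.~(\ref{dual bracket}) and (\ref{definition of d}):
\[
\la [a,b]_{\gs}, x\ra=\la \d(x),a\t b\ra=\xqh{i}\big(\la \p(\x i),a\ra\la [x,\y i]\xg,b\ra+\la [x,\x i]\xg,a\ra\la \p(\y i),b\ra\big),
\]
a sum of two groups of terms, one carrying the factor $\la \p(\x i),a\ra$ and one carrying $\la \p(\y i),b\ra$.

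Next I would expand the right-hand side. Using the defining formula $\la \adox(a),y\ra=-\la a,[\p(x),y]\xg\ra$ of the Hom-dual adjoint action, together with $\rl(a)=\xqh{i}\la \x i,a\ra\y i$ and $\srl(b)=\xqh{i}\la \y i,b\ra\x i$ read off from $\la\rl(a),b\ra=\la r,a\t b\ra$ and $\la\srl(b),a\ra=\la\sigma(r),b\t a\ra$, the first term on the right becomes $-\la b,[\p(\rl(a)),x]\xg\ra$ and the second becomes $-\la a,[\p(\srl(b)),x]\xg\ra$.

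The key step is to relocate the Hom-map using the hypothesis. Since $\p\,\rl=\rl\,\ps$ is equivalent to the tensor identity $(\p\t\id)r=(\id\t\p)r$, that is $\xqh{i}\p(\x i)\t\y i=\xqh{i}\x i\t\p(\y i)$, pairing the appropriate slot against $a$ or $b$ yields $\p(\rl(a))=\xqh{i}\la \p(\x i),a\ra\y i$ and $\p(\srl(b))=\xqh{i}\la \p(\y i),b\ra\x i$. Substituting these and flipping the brackets by skew-symmetry of $\bra$ turns the two right-hand summands into exactly the two groups of terms found on the left. Since $x$ is arbitrary, nondegeneracy of the pairing then gives the claim.

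I expect the only genuine obstacle to be bookkeeping: the Hom-map $\p$ enters in three distinct guises — inside the cobracket (\ref{definition of d}), inside the Hom-dual action $\ado=\ads\,\p$, and inside the swap hypothesis $(\p\t\id)r=(\id\t\p)r$ — and the argument hinges on applying the swap identity in the correct tensor slot so that each $\p$ lands precisely where the term-matching requires. It is worth noting that no further structure (not even weak involutivity) is needed beyond these definitions and the single hypothesis $\p\,\rl=\rl\,\ps$.
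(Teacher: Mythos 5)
Your proposal is correct and takes essentially the same route as the paper's proof: both pair against an arbitrary element of $\g$, expand $\la \d(\cdot),a\t b\ra$ from the definitions, and use the hypothesis $\p\,\rl=\rl\,\ps$ (equivalently $(\p\t\id)r=(\id\t\p)r$, which likewise gives $\p\,\srl=\srl\,\ps$) to relocate $\p$ so that the Hom-dual actions $\ado_{\rl(a)}$ and $\ado_{\srl(b)}$ can be read off. The only cosmetic differences are that the paper works with a pure tensor $r=x\t y$ in a single chain of equalities while you expand both sides of the identity and match terms, and your closing observation that weak involutivity is never actually used holds equally for the paper's computation.
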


\begin{proof}
    For convenience, we assume that $r=x\t y$ is a pure tensor. Then we have
    \begin{align*}
    & \la z,[a,b]_{\gs}\ra \\
    =& \la \d(z),a\t b\ra=\la [z,r]\xg,a\t b\ra=\la [z,x]\xg\t\p(y)+\p(x)\t[z,y]\xg,a\t b\ra\\
    =& \la [z,x]\xg,a\ra\la\p(y),b\ra+\la\p(x),a\ra\la [z,y]\xg,b\ra
    = \la [z,\la y,\ps(b)\ra x]\xg,a\ra+\la [z,\la x,\ps(a)\ra y]\xg,b\ra\\
    =& \la [z,\srl\, \ps(b)]\xg,a\ra+\la [z,\rl\, \ps(a)]\xg,b\ra
    = \la [z,\p\,  \srl(b)]\xg,a\ra+\la [z,\p\,  \rl(a)]\xg,b\ra\\
    =& \la z,\ado_{\rl(a)} b+\ado_{\srl(b)} a\ra.
    \end{align*}
    Hence the conclusion holds.
\end{proof}

\begin{lem}\label{lem 1}
    Let \hlg\ be a weakly involutive Hom-Lie algebra. If $r\in\g\otimes \g$ satisfies $\p\, \rl=\rl\, \ps$ and $[\cdot,\cdot]_{\gs}:\gs\otimes \gs\lr\gs$ is given by Eq. (\ref{dual bracket}), where
     $\d$ is defined by Eq.~(\ref{definition of d}), then we have
    \begin{equation}
    [\rl\, \ps(a),\rl\, \ps(b)]\xg-\rl\, \ps[a,b]_{\gs}=[r,r]\xg(a,b).
    \end{equation}
\end{lem}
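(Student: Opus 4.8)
The plan is to verify the identity by pairing both sides with an arbitrary $c\in\gs$ and comparing scalars. Writing $r=\xqh{i}\x i\t\y i$, I read $[r,r]\xg(a,b)\in\g$ as the contraction of the first two tensor legs of $[r,r]\xg$ against $a$ and $b$, so that $\la [r,r]\xg(a,b),c\ra=\la [r,r]\xg,a\t b\t c\ra$ for all $c$. Since the pairing between $\g$ and $\gs$ is nondegenerate, it then suffices to prove
\begin{equation*}
\la [\rl\, \ps(a),\rl\, \ps(b)]\xg-\rl\, \ps[a,b]_{\gs},c\ra=\la [r,r]\xg,a\t b\t c\ra,\quad\forall\, c\in\gs,
\end{equation*}
and I will match the two summands on the left against the three summands of $\la [r,r]\xg,a\t b\t c\ra$ one at a time. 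Note that since $[r,r]\xg$ is quadratic in $r$, I cannot reduce to a pure tensor as in Proposition~\ref{pp:dual bracket}; I keep the full double sums in $i,j$ throughout.

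For the first summand I would use only the definitions of $\rl$ and $\ps$. From $\la\rl(a),b\ra=\la r,a\t b\ra$ one reads off $\rl(a)=\xqh{i}\la\x i,a\ra\y i$, whence $\rl\, \ps(a)=\xqh{i}\la\p(\x i),a\ra\y i$ and likewise for $b$. Expanding the bracket bilinearly gives
\begin{equation*}
\la [\rl\, \ps(a),\rl\, \ps(b)]\xg,c\ra=\xqh{i,j}\la\p(\x i),a\ra\,\la\p(\x j),b\ra\,\la [\y i,\y j]\xg,c\ra,
\end{equation*}
which is exactly the third summand of $\la [r,r]\xg,a\t b\t c\ra$.

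For the second summand I would invoke the hypothesis in the form $\rl\, \ps=\p\,\rl$ to push $\p$ onto the $\y i$-legs, so that $\la\rl\, \ps[a,b]_{\gs},c\ra=\xqh{i}\la\x i,[a,b]_{\gs}\ra\,\la\p(\y i),c\ra$. Then the defining relation $\la [a,b]_{\gs},\x i\ra=\la\d(\x i),a\t b\ra$ together with $\d(\x i)=[\x i,r]\xg=\xqh{j}\big(\p(\x j)\t[\x i,\y j]\xg+[\x i,\x j]\xg\t\p(\y j)\big)$ converts this into a double sum in $i$ and $j$. Relabeling $i\leftrightarrow j$ and applying the skew-symmetry of $\bra$ to each of the two resulting terms reproduces, with the opposite sign, precisely the first and second summands of $\la [r,r]\xg,a\t b\t c\ra$.

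Adding the two contributions recovers all three summands of $\la [r,r]\xg,a\t b\t c\ra$, and nondegeneracy of the pairing then yields the stated equality in $\g$. I expect the only nonroutine points to be fixing the contraction convention for $[r,r]\xg(a,b)$ and keeping the index relabelings and the signs produced by the skew-symmetry of $\bra$ consistent; beyond this bookkeeping, the argument uses nothing more than the hypothesis $\p\,\rl=\rl\,\ps$ (equivalently $(\p\t\id)r=(\id\t\p)r$) and the definitions of $\d$ and $[\cdot,\cdot]_{\gs}$.
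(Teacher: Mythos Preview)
Your proposal is correct and follows essentially the same pairing-with-$c$ strategy as the paper's proof. The only cosmetic difference is that the paper invokes Proposition~\ref{pp:dual bracket} to rewrite $[a,b]_{\gs}$ via $\ado$ and $\srl$ before computing $\la\rl\,\ps[a,b]_{\gs},c\ra$, whereas you bypass that proposition and expand directly from the definition of $\d$ in tensor components; both routes amount to the same bookkeeping.
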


\begin{proof}
    For any $a,b,c\in\g$, we have
    \begin{align*}
    \la \rl\, \ps[a,b]_{\gs},c\ra=& \la \ado_{\rl(a)} b+\ado_{\srl(b)} a,\p\, \srl(c)\ra\\
    =& -\la b,[\p\, \rl(a),\p\, \srl(c)]\xg\ra-\la a,[\p\, \srl(b),\p\, \srl(c)]\xg\ra\\
    =& -\la b,[\rl\, \ps(a),\srl\, \ps(c)]\xg\ra-\la a,[\srl\, \ps(b),\srl\, \ps(c)]\xg\ra.
    \end{align*}
    It is straightforward to deduce that
    \begin{align*}
    & [r,r]\xg(a,b,c) \\
    =& \la a,[\srl\, \ps(b),\srl\, \ps(c)]\xg\ra+\la b,[\rl\, \ps(a),\srl\, \ps(c)]\xg\ra+\la c,[\rl\, \ps(a),\rl\, \ps(b)]\xg\ra.
    \end{align*}
Therefore the conclusion holds.
\end{proof}

\begin{cor}\label{cor homomorphism}
Let $(\g,\d)$ be a quasitriangular Hom-Lie bialgebra, where $\d$ is
given by Eq.~(\ref{definition of d}) for a solution $r$ of the classical
Hom-Yang-Baxter equation. Then $\rl\,  \ps:\gs\lr\g$ is a
homomorphism of Hom-Lie algebras.
\end{cor}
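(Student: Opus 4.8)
The plan is to verify directly the two defining conditions of a homomorphism of Hom-Lie algebras for the map $f:=\rl\, \ps:\hlgs\lr\hlg$, viewing the source as carrying the Hom-map $\ps$ and the target as carrying $\p$.

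First I would establish the bracket-preservation identity $\rl\, \ps[a,b]_{\gs}=[\rl\, \ps(a),\rl\, \ps(b)]\xg$ for all $a,b\in\gs$. This is exactly what Lemma~\ref{lem 1} delivers, since it asserts that $[\rl\, \ps(a),\rl\, \ps(b)]\xg-\rl\, \ps[a,b]_{\gs}=[r,r]\xg(a,b)$. Because $(\g,\d)$ is a quasitriangular Hom-Lie bialgebra, the element $r$ is a solution of the classical Hom-Yang-Baxter equation, so $[r,r]\xg=0$ and the right-hand side vanishes identically. Hence $f$ preserves the brackets.

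Next I would check compatibility with the Hom-maps, namely $f\, \ps=\p\, f$. Since $r$ underlies a coboundary Hom-Lie bialgebra, it satisfies $(\p\t\id)r=(\id\t\p)r$, which the paper has already noted is equivalent to $\p\, \rl=\rl\, \ps$. Using this relation I compute $\p\, f=\p\, \rl\, \ps=\rl\, \ps\, \ps=\rl\, {\ps}^2=f\, \ps$, which is the required condition.

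With both conditions verified, $f=\rl\, \ps$ is a homomorphism of Hom-Lie algebras. There is no serious obstacle here: the substantive computation is entirely absorbed into Lemma~\ref{lem 1} together with the fact that $r$ solves the classical Hom-Yang-Baxter equation, while the second condition is a one-line consequence of $\p\, \rl=\rl\, \ps$. The only point requiring care is to match the Hom-map $\ps$ on the source $\gs$ against $\p$ on the target $\g$ when applying the definition of a Hom-Lie algebra homomorphism.
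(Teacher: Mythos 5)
Your proposal is correct and follows essentially the same route as the paper, which presents this statement as an immediate corollary of Lemma~\ref{lem 1}: the vanishing of $[r,r]_\g$ turns the identity $[\rl\,\ps(a),\rl\,\ps(b)]\xg-\rl\,\ps[a,b]_{\gs}=[r,r]\xg(a,b)$ into bracket preservation. Your explicit verification of the Hom-map compatibility $\p\,\rl\,\ps=\rl\,{\ps}^2$ via $(\p\t\id)r=(\id\t\p)r$ is a point the paper leaves implicit, and you handle it correctly.
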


Let \hlg\ be a weakly involutive Hom-Lie algebra and $r\in\g\otimes
\g$ be invertible (that is, $\rl$ is invertible). Define a bilinear form
$B\in(\g\otimes \g)^*$  by
\begin{equation}
B(x,y)=\la (\rl)^{-1}(x),y\ra.
\end{equation}
Then it is easy to see that $B$ is skew-symmetric if and only if $r$
is skew-symmetric.

\begin{pro}
    With the above notations, suppose that  $r$ is skew-symmetric. Then $\p\, \rl=\rl\, \ps$ and $r$ is a solution of
     the classical Hom-Yang-Baxter equation (\ref{Hom-Yang-Baxter equation}) if and
    only if
    \begin{equation}
    B(\p(x),[y,z]\xg)+B(\p(y),[z,x]\xg)+B(\p(z),[x,y]\xg)=0, B(\p(x),y)=B(x,\p(y)).
    \end{equation}
\end{pro}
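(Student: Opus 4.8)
The plan is to treat the two displayed conditions on $B$ separately, since the second (symmetry) condition will encode the compatibility $\p\, \rl=\rl\, \ps$ while the first (cyclic) condition will encode the classical Hom-Yang-Baxter equation $[r,r]\xg=0$. Throughout I use that $\rl$ is invertible, so that every $x\in\g$ can be written uniquely as $x=\rl(a)$ with $a\in\gs$, and that the pairing between $\t^3\g$ and $\t^3\gs$ is nondegenerate, so that $[r,r]\xg=0$ if and only if $\la [r,r]\xg,a\t b\t c\ra=0$ for all $a,b,c\in\gs$.

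First I would dispatch the symmetry condition by a direct computation. By the definition of $B$ we have $B(\p(x),y)=\la (\rl)^{-1}\p(x),y\ra$, while using $\la \ps(\xi),v\ra=\la \xi,\p(v)\ra$ gives $B(x,\p(y))=\la (\rl)^{-1}(x),\p(y)\ra=\la \ps\, (\rl)^{-1}(x),y\ra$. Hence $B(\p(x),y)=B(x,\p(y))$ for all $x,y\in\g$ if and only if $(\rl)^{-1}\p=\ps\, (\rl)^{-1}$, which rearranges to exactly $\p\, \rl=\rl\, \ps$. This is precisely the hypothesis under which Lemma~\ref{lem 1} applies.

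Next, assuming $\p\, \rl=\rl\, \ps$, I would show that the cyclic condition is equivalent to $[r,r]\xg=0$. Writing $x=\rl(a)$, $y=\rl(b)$, $z=\rl(c)$, the relation $\p\, \rl=\rl\, \ps$ gives $(\rl)^{-1}\p(x)=\ps(a)$, so that $B(\p(x),[y,z]\xg)=\la \ps(a),[\rl(b),\rl(c)]\xg\ra$. Applying $\la \ps(a),w\ra=\la a,\p(w)\ra$ together with the fact that $\p$ preserves the bracket, this equals $\la a,[\p\rl(b),\p\rl(c)]\xg\ra=\la a,[\rl\ps(b),\rl\ps(c)]\xg\ra$. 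Summing cyclically, the left-hand side of the first condition becomes
$$\la a,[\rl\ps(b),\rl\ps(c)]\xg\ra+\la b,[\rl\ps(c),\rl\ps(a)]\xg\ra+\la c,[\rl\ps(a),\rl\ps(b)]\xg\ra.$$
Comparing with the explicit contraction of $[r,r]\xg$ against $a\t b\t c$ recorded in the proof of Lemma~\ref{lem 1}, and using skew-symmetry of $r$ (so that $\srl=-\rl$) together with skew-symmetry of the bracket to bring that expression into the same cyclic form, this sum is precisely $\la [r,r]\xg,a\t b\t c\ra$. Since $a,b,c$ range over all of $\gs$, the cyclic condition holds for all $x,y,z$ if and only if $[r,r]\xg=0$.

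Combining the two equivalences yields the proposition: the pair $\{\p\, \rl=\rl\, \ps,\ [r,r]\xg=0\}$ holds if and only if the displayed pair of conditions on $B$ holds. The only delicate point is the bookkeeping in the last step, namely correctly moving the factor $\ps$ across the pairing via $\p$ and tracking the signs produced by skew-symmetry of $r$ when identifying the cyclic sum with $\la[r,r]\xg,a\t b\t c\ra$; but no genuinely new computation is needed beyond what is already contained in Lemma~\ref{lem 1} and its proof.
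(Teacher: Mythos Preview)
Your proof is correct and follows essentially the same route as the paper: write $x=\rl(a)$, $y=\rl(b)$, $z=\rl(c)$, reduce the symmetry condition on $B$ to $\p\,\rl=\rl\,\ps$, and reduce the cyclic condition to $\la [r,r]\xg,a\t b\t c\ra=0$ via the contraction formula appearing in the proof of Lemma~\ref{lem 1}. The only organizational difference is that the paper proves the forward direction using the \emph{conclusion} of Lemma~\ref{lem 1} (that $\rl\,\ps$ is a homomorphism when $[r,r]\xg=0$) together with Proposition~\ref{pp:dual bracket}, and then dismisses the converse with ``similarly'', whereas you handle both directions at once by matching the cyclic sum directly to the contraction formula; your version is arguably cleaner but not materially different.
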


\begin{proof}
    For any $x,y,z\in\g$, set $x=\rl(a)$, $y=\rl(b)$, $z=\rl(c)$. Since $\rl\, \ps=\p\, \rl$, we have
    \begin{equation*}
    \begin{split}
    B(\p(x),y)=\la (\rl)^{-1}\, \p(x),y\ra=\la \ps\, (\rl)^{-1}(x),y\ra=\la (\rl)^{-1}(x),\p(y)\ra=B(x,\p(y)).
    \end{split}
    \end{equation*}
    By Lemma \ref{lem 1}, if $r$ satisfies the classical Hom-Yang-Baxter equation, then we have
    \begin{align*}
    B(\p(x),[y,z]\xg)=& B(x,\p [y,z]\xg)=\la a,[\p\, \rl(b),\p\, \rl(c)]\xg\ra=\la a,\rl\, \ps[b,c]_{\gs}\ra\\
    =& \la -\p\, \rl(a),\ado_{\rl(b)} c-\ado_{\rl(c)} b\ra\\
    =& \la c,[\p\, \rl(b),\p\, \rl(a)]\xg\ra-\la b,[\p\, \rl(c),\p\, \rl(a)]\xg\ra\\
    =& B(z,\p [y,x]\xg)-B(y,\p [z,x]\xg)\\
    =& -B(\p(z),[x,y]\xg)-B(\p(y),[z,x]\xg).
    \end{align*}
    The converse can be proved similarly.
\end{proof}

\section{Operator forms of the classical Hom-Yang-Baxter equation}
\label{sec:oop}
In this section, we give a further interpretation of the classical
Hom-Yang-Baxter equation. We discuss the relationship between an
\o-operator associated to an arbitrary weakly involutive representation and the classical Hom-Yang-Baxter equation, which
leads to a construction of solutions of the classical
Hom-Yang-Baxter equation by means of \o-operators and
Hom-left-symmetric algebras.

\begin{defi}[\cite{BS}]
Let \hlg\ be a Hom-Lie algebra and \rep\ be a representation of \hlg. A
linear map $T:\v\lr\g$ is called an {\bf \o-operator} associated to \rep\
if $T$ satisfies
\begin{itemize}
    \item[(i)]  $T\, \b=\p\,  T$,
    \item[(ii)]  $[T(u),T(v)]\xg=T(\r(T(u))v-\r(T(v))u)$, $\forall\ u,v\in\v$.
\end{itemize}
\end{defi}

\begin{exa}
Let \hlg\ be a weakly involutive Hom-Lie algebra. Suppose that
$r\in\j^2\g$ satisfies Eq. (\ref{ra*=ar}). By Lemma \ref{lem 1},
$r$ satisfies the classical Hom-Yang-Baxter equation
(\ref{Hom-Yang-Baxter equation}) if $\rl$ is an \o-operator
associated to the representation $(\gs,\ps,\ado)$. Conversely, if
$r$ satisfies the classical Hom-Yang-Baxter equation
(\ref{Hom-Yang-Baxter equation}) and in addition, $\p$ is
invertible,  then $\rl$ is an \o-operator associated to the
representation $(\gs,\ps,\ado)$.
\end{exa}

There is a class of Hom-Lie algebras coming from the
following structure:
\begin{defi}[\cite{MS2,DY}]
A {\bf Hom-left-symmetric algebra} is a triple $(\v,\cdot,\psi)$
consisting of a linear space \v, a bilinear map $\cdot:\v\t\v\lr\v$
and an algebra homomorphism $\psi:\v\lr\v$ satisfying
\begin{equation}
(u\cdot v)\cdot\psi(w)-\psi(u)\cdot(v\cdot w)=(v\cdot
u)\cdot\psi(w)-\psi(v)\cdot(u\cdot w),\ \forall\ u,v,w\in\v.
\end{equation}
\end{defi}

Indeed,
\begin{pro}[\cite{BS}]\label{commutator Hom-Lie algebra}
Let \hls\ be a Hom-left-symmetric algebra. Then
\begin{itemize}
    \item[(i)]  $(\g(\v),[\cdot,\cdot]_{\v},\psi)$ is a Hom-Lie algebra, where $\g(\v)=\v$ as a vector space, and the bracket $[\cdot,\cdot]_{\v}$ is given by
    \begin{equation*}
    [u,v]_{\v}=u\cdot v-v\cdot u.
    \end{equation*}
    We call $(\g(\v),[\cdot,\cdot]_{\v},\psi)$ the {\bf commutator Hom-Lie algebra}.
        \item[(ii)]  Let $L:\v\lr\gl(\v)$ be the linear map with $u\mapsto L_u$, where $L_u(v)=u\cdot v$ for any $u,v\in\v$. Then $(\v,\psi,L)$ is a representation of the Hom-Lie algebra $(\g(\v),[\cdot,\cdot]_{\v},\psi)$ on \v.
\end{itemize}
\end{pro}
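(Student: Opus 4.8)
The plan is to verify the defining axioms directly, reducing everything to the Hom-left-symmetric identity together with the hypothesis that $\psi$ is an algebra homomorphism, i.e. $\psi(u\cdot v)=\psi(u)\cdot\psi(v)$. I will treat (i) and (ii) in turn.

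For part (i), skew-symmetry of $[u,v]_{\v}=u\cdot v-v\cdot u$ is immediate from its definition. That $\psi$ preserves the bracket follows at once from the homomorphism property:
\[
\psi([u,v]_{\v})=\psi(u\cdot v)-\psi(v\cdot u)=\psi(u)\cdot\psi(v)-\psi(v)\cdot\psi(u)=[\psi(u),\psi(v)]_{\v}.
\]
The substantive point is the Hom-Jacobi identity. Here I would introduce the Hom-associator
\[
A(x,y,z):=(x\cdot y)\cdot\psi(z)-\psi(x)\cdot(y\cdot z),
\]
and observe that the Hom-left-symmetric identity says precisely that $A$ is symmetric in its first two slots, $A(x,y,z)=A(y,x,z)$. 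Expanding the cyclic sum $[\psi(u),[v,w]_{\v}]_{\v}+[\psi(v),[w,u]_{\v}]_{\v}+[\psi(w),[u,v]_{\v}]_{\v}$ yields twelve triple products, each of the form $(\,\cdot\,)\cdot\psi(\,\cdot\,)$ or $\psi(\,\cdot\,)\cdot(\,\cdot\,)$. I would then match these twelve terms into three expressions of the shape $A(a,b,c)-A(b,a,c)$, each of which vanishes by the Hom-left-symmetric identity, giving the Hom-Jacobi identity. Notably, this step needs only the Hom-left-symmetric identity and not the homomorphism property of $\psi$.

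For part (ii), write $L_u(v)=u\cdot v$ and check the two axioms of a representation with $\beta=\psi$ and $\rho=L$. The first axiom $L_{\psi(u)}\,\psi=\psi\,L_u$, evaluated on $v$, reads $\psi(u)\cdot\psi(v)=\psi(u\cdot v)$, which is exactly the homomorphism property of $\psi$. The second axiom $L_{[u,v]_{\v}}\,\psi=L_{\psi(u)}L_v-L_{\psi(v)}L_u$, evaluated on $w$, becomes
\[
(u\cdot v-v\cdot u)\cdot\psi(w)=\psi(u)\cdot(v\cdot w)-\psi(v)\cdot(u\cdot w),
\]
and rearranging both sides shows this is precisely $A(u,v,w)=A(v,u,w)$, the Hom-left-symmetric identity once more.

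The only genuine obstacle is the bookkeeping in the twelve-term Hom-Jacobi expansion; the device that keeps it tractable is isolating the Hom-associator $A$ and organizing the terms so that each matched pair is a swap of the first two arguments. Once this is done, the whole computation collapses onto the single defining identity of the Hom-left-symmetric algebra.
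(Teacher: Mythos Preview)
Your proposal is correct: the direct verification via the Hom-associator $A(x,y,z)=(x\cdot y)\cdot\psi(z)-\psi(x)\cdot(y\cdot z)$ is exactly the right device, and both the Hom-Jacobi identity and the second representation axiom reduce cleanly to $A(x,y,z)=A(y,x,z)$ as you describe. Note that the paper does not supply its own proof of this proposition---it is quoted from \cite{BS}---so there is nothing to compare against here; your argument is the standard one and would serve as a self-contained proof.
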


\begin{exa}
Let \hls\ be a Hom-left-symmetric algebra. Then $\id$ is an
\o-operator of the commutator Hom-Lie algebra $(\g(\v),[\cdot,\cdot]_{\v},\psi)$
associated to the representation $(\v,\psi,L)$.
\end{exa}

\begin{cor}\label{2 o-operator}
Let \hls\ be a Hom-left-symmetric algebra. Then $(\v,\psi,L)$ is a
weakly involutive representation of the commutator Hom-Lie algebra
$(\g(\v),[\cdot,\cdot]_{\v},\psi)$ if and only if
$$u\cdot v=\psi^2(u)\cdot v,\ \forall\ u,v\in\v.$$
Under this condition, $\psi^2$ is an \o-operator of
$(\g(\v),[\cdot,\cdot]_{\v},\psi)$ associated to $(\v,\psi,L)$.
\end{cor}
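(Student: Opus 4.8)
The plan is to treat the two assertions separately, since the first is a matter of unwinding a definition and the second is a short direct verification. For the equivalence, I would start from the definition of a weakly involutive representation: $(\v,\psi,L)$ is weakly involutive when $L(\psi^2(u))=L(u)$ for all $u$, where I use that the Hom-map of the commutator Hom-Lie algebra $(\g(\v),[\cdot,\cdot]_\v,\psi)$ is $\psi$ and that $L$ plays the role of $\rho$. Because $L_u(v)=u\cdot v$ by Proposition~\ref{commutator Hom-Lie algebra}, evaluating the operator identity $L_{\psi^2(u)}=L_u$ on an arbitrary $v\in\v$ turns it into $\psi^2(u)\cdot v=u\cdot v$ for all $u,v\in\v$, which is exactly the stated condition. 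Thus the equivalence is immediate.

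For the second assertion I would assume $u\cdot v=\psi^2(u)\cdot v$ for all $u,v$ and check the two \o-operator axioms for $T=\psi^2$, where here $\beta=\psi$ and $\phi=\psi$ both coincide with $\psi$. Axiom (i), namely $T\psi=\psi T$, holds at once since both sides equal $\psi^3$. For axiom (ii) I would compute the two sides of $[\psi^2(u),\psi^2(v)]_\v=\psi^2\big(L_{\psi^2(u)}(v)-L_{\psi^2(v)}(u)\big)$. The left-hand side is $\psi^2(u)\cdot\psi^2(v)-\psi^2(v)\cdot\psi^2(u)$, while the right-hand side equals $\psi^2\big(\psi^2(u)\cdot v-\psi^2(v)\cdot u\big)$. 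Since $\psi$ is an algebra homomorphism, so is $\psi^2$, and the right-hand side expands to $\psi^4(u)\cdot\psi^2(v)-\psi^4(v)\cdot\psi^2(u)$. Applying the weakly involutive identity $\psi^2(a)\cdot b=a\cdot b$ with $a=\psi^2(u),\,b=\psi^2(v)$, and symmetrically, collapses this to $\psi^2(u)\cdot\psi^2(v)-\psi^2(v)\cdot\psi^2(u)$, matching the left-hand side.

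There is no substantial obstacle here, as the statement is a direct corollary of the definitions; the only care needed is bookkeeping, namely recognizing that both the representation twist and the Hom-Lie twist are $\psi$, and applying the homomorphism property of $\psi^2$ \emph{before} invoking the involutive identity. It is worth emphasizing that axiom (ii) is precisely where the weakly involutive hypothesis is indispensable: without it the right-hand side carries a factor of $\psi^4$ where the left-hand side carries only $\psi^2$, so the two sides cannot agree in general.
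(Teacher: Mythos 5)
Your proof is correct and takes essentially the same route as the paper: the equivalence is obtained by unwinding the definition of a weakly involutive representation via $L_{\psi^2(u)}=L_u$, and the $\mathcal{O}$-operator property of $T=\psi^2$ is a direct verification of the two axioms using the homomorphism property of $\psi$ together with the identity $\psi^2(u)\cdot v=u\cdot v$. The only (immaterial) difference is the order of the two ingredients in axiom (ii): the paper first uses $\psi^2(u)\cdot v=u\cdot v$ inside $\psi^2$ and then the homomorphism property, so your closing claim that the homomorphism property must be applied \emph{before} the involutive identity is not actually necessary---either order yields the same conclusion.
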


\begin{proof}
By definition, $(\v,\psi,L)$ is a weakly involutive representation
of $(\g(\v),[\cdot,\cdot]_{\v},\psi)$ if and only if
$L_{u}=L_{\psi^2(u)}$ for all $u\in\v$, i.e. $u\cdot v=\psi^2(u)\cdot
v$ for all $u,v\in\v$. Under this condition, we have
\begin{align*}
\psi^2(L_{\psi^2(u)}(v)-L_{\psi^2(v)}(u))=& \psi^2(u\cdot v-v\cdot
u)=\psi^2(u)\cdot \psi^2(v)-\psi^2(v)\cdot
\psi^2(u)\\
=&[\psi^2(u),\psi^2(v)]_{\v},
\end{align*}
and $\psi^2\,  \psi=\psi\,  \psi^2$. Hence the conclusion holds.
\end{proof}

Let \hlg\ be a Hom-Lie algebra and \rep\ be a weakly involutive
representation. Then we have the semidirect product Hom-Lie algebra
$\g\ltimes_{\ro}V^*$, and any linear map $T:\v\lr\g$ can be viewed
as an element $\overline{T}\in (\g\oplus V^*)\t (\g\oplus V^*)$ via
\begin{equation}
\overline{T}(a+u,b+v)=\la T(u),b\ra,\ \forall\
a+u,b+v\in\gs\oplus\v.
\end{equation}
Then $r=\overline{T}-\sigma(\overline{T})$ is skew-symmetric.

    \begin{lem}\label{lem 2}
With the above notations, let $T:\v\lr\g$ be a linear map satisfying $T\, \b=\p\,  T$. Then $r=\overline{T}-\sigma(\overline{T})$ satisfying $(\phi_{_{d}}\t \mathrm{Id})r=(\mathrm{Id}\t \phi_{_{d}})r$, where $\phi_{_{d}}=\p\oplus \b^*$.
    \end{lem}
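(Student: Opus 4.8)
The plan is to reduce the tensor identity $(\pd\t\id)r=(\id\t\pd)r$ to an intertwining relation for the induced map $\rl$, mirroring the earlier observation that, for $r\in\g\t\g$, the identity $(\p\t\id)r=(\id\t\p)r$ is equivalent to $\p\,\rl=\rl\,\ps$. Here the ambient space is the semidirect product $\g\ltimes_{\ro}\v^*$ with underlying vector space $\g\oplus\v^*$, whose dual is $\gs\oplus\v$; the induced map $\rl:\gs\oplus\v\lr\g\oplus\v^*$ is given by $\la\rl(a+u),b+v\ra=\la r,(a+u)\t(b+v)\ra$. The same computation as in the case $r\in\g\t\g$ shows that $(\pd\t\id)r=(\id\t\pd)r$ holds if and only if $\pd\,\rl=\rl\,\pd^*$, where $\pd^*=\ps\oplus\b$ is the dual of $\pd=\p\oplus\b^*$ acting on $\gs\oplus\v$ (using the identification $\v^{**}=\v$).

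First I would compute $\rl$ explicitly from the defining formula of $\overline{T}$. Since $\overline{T}(a+u,b+v)=\la T(u),b\ra$ involves only the $\v$-component $u$ of the first argument and the $\gs$-component $b$ of the second, unwinding the pairing gives $\overline{T}^{\sharp}(a+u)=T(u)\in\g$, while the flipped tensor contributes $\sigma(\overline{T})^{\sharp}(a+u)=T^*(a)\in\v^*$, where $T^*:\gs\lr\v^*$ is the dual of $T$. Hence
\begin{equation*}
\rl(a+u)=T(u)-T^*(a),\qquad T(u)\in\g,\ T^*(a)\in\v^*.
\end{equation*}

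Next I would substitute this into the intertwining condition and compare components. Computing both sides yields $\pd\,\rl(a+u)=\p(T(u))-\b^*(T^*(a))$ and $\rl\,\pd^*(a+u)=T(\b(u))-T^*(\ps(a))$. Matching the $\g$-components gives $\p\,T=T\,\b$, which is exactly the hypothesis, and matching the $\v^*$-components gives $\b^*\,T^*=T^*\,\ps$; the latter is automatic, being simply the dual of $\p\,T=T\,\b$ since $(T\b)^*=\b^*T^*$ and $(\p T)^*=T^*\ps$. Thus the hypothesis $\p\,T=T\,\b$ forces $\pd\,\rl=\rl\,\pd^*$, which is the desired identity. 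The only delicate point is the bookkeeping---tracking which summand of $\g\oplus\v^*$ each term of $\overline{T}$ and $\sigma(\overline{T})$ lands in, and correctly identifying $\pd^*=\ps\oplus\b$---after which the verification is routine, the dual relation $\b^*\,T^*=T^*\,\ps$ requiring nothing beyond dualizing the given intertwining relation.
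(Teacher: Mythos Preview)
Your argument is correct and takes a genuinely different route from the paper. The paper chooses a basis $\{v_i\}$ of $\v$ with dual basis $\{v^i\}$, writes $r=v^i\t T(v_i)-T(v_i)\t v^i$, applies $\pd\t\id$ and $\id\t\pd$ directly, and then checks via pairing that $\b^*(v^i)\t T(v_i)=v^i\t T\b(v_i)$ and $T(v_i)\t\b^*(v^i)=T\b(v_i)\t v^i$, so that the hypothesis $T\b=\p T$ makes the two sides agree term by term. Your approach instead passes to the induced operator $\rl$ on the dual $\gs\oplus\v$, invoking the equivalence (already noted in the paper for $r\in\g\t\g$) between $(\pd\t\id)r=(\id\t\pd)r$ and $\pd\,\rl=\rl\,\pd^*$; after identifying $\rl(a+u)=T(u)-T^*(a)$, the condition splits into $\p T=T\b$ on the $\g$-component and its dual $\b^*T^*=T^*\ps$ on the $\v^*$-component. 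The paper's basis computation is more concrete and self-contained, while your operator formulation is coordinate-free and makes transparent why only the single intertwining relation $T\b=\p T$ is needed: the second component is automatically its transpose.
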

\begin{proof}
Let $\{v_1,\ldots,v_n\}$ be a basis of \v\ and $\{v^1,\ldots,v^n\}$
be its dual basis. Use the Einstein summation convention,
$r$ can be expressed by $r=v^i\t T(v_i)-T(v_i)\t v^i$.
Then we have
\begin{align*}
(\phi_{_{d}}\t \mathrm{Id})r=& \b^*(v^i)\t T(v_i)-\p\,  T(v_i)\t v^i,\\
(\mathrm{Id}\t \phi_{_{d}})r=& v^i\t \p\,  T(v_i)-T(v_i)\t \b^*(v^i).
\end{align*}
Since
\begin{align*}
\la \b^*(v^i)\t T(v_i),v_j\t a\ra=& \la \b^*(v^i),v_j\ra \la T(v_i),a\ra=\la T(\la v^i,\b(v_j)\ra v_i),a\ra=\la T(\b(v_j)),a\ra\\
=& \la v^i\t T\,  \b(v_i),v_j\t a\ra,
\end{align*}
we get $\b^*(v^i)\t T(v_i)=v^i\t T\,  \b(v_i)$. Similarly, we have $T(v_i)\t \b^*(v^i)=T\,  \b(v_i)\t v^i$. Hence the conclusion holds.
\end{proof}

\begin{thm}\label{theorem 2}
    Let \hlg\ be a Hom-Lie algebra and \rep\ be a weakly involutive representation. Let $T:\v\lr\g$ be a linear map satisfying $T\, \b=\p\,  T$. Then $r=\overline{T}-\sigma(\overline{T})$ is a solution of the classical Hom-Yang-Baxter equation in the Hom-Lie algebra $\g\ltimes_{\ro}V^*$ if $T$ is an \o-operator associated to \rep.
\end{thm}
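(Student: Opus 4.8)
The plan is to establish the classical Hom-Yang-Baxter equation $[r,r]_K=0$ in $K:=\g\ltimes_{\ro}\v^*$ by pairing against an arbitrary $\alpha\t\beta\t\gamma\in K^*\t K^*\t K^*$, where $K^*=\gs\oplus\v$, and showing the result vanishes. The starting point is the identity
\begin{align*}
\la [r,r]_K,\alpha\t\beta\t\gamma\ra
=& \la \alpha,[\srl\,\pd^*(\beta),\srl\,\pd^*(\gamma)]_K\ra
+\la \beta,[\rl\,\pd^*(\alpha),\srl\,\pd^*(\gamma)]_K\ra\\
&+\la \gamma,[\rl\,\pd^*(\alpha),\rl\,\pd^*(\beta)]_K\ra,
\end{align*}
obtained by expanding the definition of $[r,r]_K$ against the duality pairing exactly as in the proof of Lemma~\ref{lem 1}; this step is purely formal and uses neither weak involutivity nor $\pd\,\rl=\rl\,\pd^*$. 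Since $r=\overline{T}-\sigma(\overline{T})$ is skew-symmetric, $\srl=-\rl$, so setting $P:=\rl\,\pd^*$ the right-hand side becomes $\la\alpha,[P\beta,P\gamma]_K\ra-\la\beta,[P\alpha,P\gamma]_K\ra+\la\gamma,[P\alpha,P\beta]_K\ra$. By Lemma~\ref{lem 2}, $P=\rl\,\pd^*=\pd\,\rl$, and computing $\rl$ explicitly from $r=\sum_i(v^i\t T(v_i)-T(v_i)\t v^i)$ gives $P(a+u)=\p T(u)-\b^*T^*(a)$ for $a\in\gs$, $u\in\v$, where $T^*:\gs\lr\v^*$ is the dual of $T$.

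Next I would split each bracket $[P\alpha,P\beta]_K$ along $K=\g\oplus\v^*$. For $\alpha=a+u,\beta=b+v,\gamma=c+w$ (with $a,b,c\in\gs$ and $u,v,w\in\v$), writing $X_\alpha:=\p T(u)=T(\b u)\in\g$ and $\Xi_\alpha:=-\b^*T^*(a)\in\v^*$, the $\g$-component of $[P\alpha,P\beta]_K$ is $[X_\alpha,X_\beta]\xg$ and its $\v^*$-component is $\ro(X_\alpha)\Xi_\beta-\ro(X_\beta)\Xi_\alpha$. Pairing against $\alpha,\beta,\gamma$ then produces a ``$\g$-part'' $G$ (the brackets $[X_\bullet,X_\bullet]\xg$ paired with the $\gs$-components) and a ``$\v^*$-part'' $V$ (the $\ro$-terms paired with the $\v$-components). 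For $G$ I would invoke $X_\alpha=T(\b u)$ and the \o-operator relation (ii) to write
\[
[X_\beta,X_\gamma]\xg=[T(\b v),T(\b w)]\xg=T\big(\r(X_\beta)(\b w)-\r(X_\gamma)(\b v)\big),
\]
so that $\la a,[X_\beta,X_\gamma]\xg\ra=\la T^*(a),\r(X_\beta)(\b w)-\r(X_\gamma)(\b v)\ra$, and likewise for the other two summands of $G$.

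For $V$ the key is that $\ro=\rs\,\p$ carries its own twist, which meets the twist already present in $X_\bullet=\p T(\cdot)$. From $\la\ro(x)(\xi),v\ra=-\la\xi,\r(\p(x))v\ra$ the relevant operator is $\r(\p(X_\beta))=\r(\p^2 T(v))$; here the weak involutivity of \rep\ collapses $\r(\p^2(\cdot))=\r(\cdot)$, and combining with the representation axiom $\b\,\r(x)=\r(\p(x))\,\b$ gives
\[
\la\ro(X_\beta)(\b^*T^*(c)),u\ra=-\la T^*(c),\r(X_\beta)(\b u)\ra,
\]
with the analogous identity for every term of $V$. Substituting these evaluations, each of the six summands of $V$ matches one summand of $G$ with the opposite sign, so $G+V=0$ term by term; thus $\la[r,r]_K,\alpha\t\beta\t\gamma\ra=0$ for all $\alpha,\beta,\gamma$, whence $[r,r]_K=0$. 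The main obstacle is bookkeeping: one must simultaneously track three layers of duality ($T$ versus $T^*$, $\b$ versus $\b^*$, and the extra $\p$ concealed in $\ro$) and control all signs, the argument turning on the fact that weak involutivity is exactly what absorbs the $\p^2$ produced when $\ro$ acts on $X_\bullet=\p T(\cdot)$.
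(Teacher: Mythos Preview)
Your proof is correct, and it takes a genuinely different route from the paper's argument.

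The paper fixes a basis $\{v_i\}$ of $\v$, writes $r=\sum_i(v^i\t T(v_i)-T(v_i)\t v^i)$, and expands $[r,r]_{\g\ltimes_{\ro}\v^*}$ directly as a nine-term sum of basis tensors. After manipulating each term using weak involutivity and $T\,\b=\p\,T$, the authors group everything into the explicit formula
\[
[r,r]_{\g\ltimes_{\ro}\v^*}=\p(OT(v_i,v_j))\t v^i\t v^j-v^i\t \p(OT(v_i,v_j))\t v^j+v^i\t v^j\t \p(OT(v_i,v_j)),
\]
where $OT(u,v)=[T(u),T(v)]_\g-T(\r(T(u))v-\r(T(v))u)$ is the \o-operator defect, so $OT=0$ forces $[r,r]=0$.

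Your argument instead pairs $[r,r]_K$ against arbitrary $\alpha\t\beta\t\gamma\in(K^*)^{\t3}$ via the purely formal identity displayed in the proof of Lemma~\ref{lem 1}, reduces to the operator $P=\pd\,\rl$, splits into $\g$- and $\v^*$-components, and cancels the six resulting summands pairwise. This is coordinate-free and more conceptual: it isolates precisely where weak involutivity enters (absorbing the $\p^2$ that appears when $\ro$ is applied to $X_\bullet=\p T(\cdot)$) and where the \o-operator relation enters (rewriting the $\g$-brackets). The paper's basis computation, on the other hand, yields the explicit defect formula above, which has the advantage of making the partial converse in the subsequent Remark (when $\p$ is invertible) immediate: one reads off that $[r,r]=0$ forces $\p\,OT=0$.
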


\begin{proof}
Let $\{v_1,\ldots,v_n\}$ be a basis of \v\ and $\{v^1,\ldots,v^n\}$
be its dual basis. Use the Einstein summation convention,
$\overline{T}$ can be expressed by $\overline{T}=v^i\t T(v_i)$.
Then we have
\begin{align*}
&[r,r]_{\g\ltimes_{\ro}V^*}\\
=& -[v^i,T(v_j)]_{\g\ltimes_{\ro}V^*}\t\p\,  T(v_i)\t\b^*v^j-[T(v_i),v^j]_{\g\ltimes_{\ro}V^*}\t\b^*v^i\t\p\,  T(v_j)\\
& +[T(v_i),T(v_j)]\xg\t\b^*v^i\t\b^*v^j\\
&-\b^*v^i\t[T(v_i),T(v_j)]\xg\t\b^*v^j+\b^*v^i\t[T(v_i),v^j]_{\g\ltimes_{\ro}V^*}\t\p\,  T(v_j)\\
&+\p\,  T(v_i)\t[v^i,T(v_j)]_{\g\ltimes_{\ro}V^*}\t\b^*v^j\\
&+\b^*v^i\t\b^*v^j\t[T(v_i),T(v_j)]\xg-\b^*v^i\t\p\,  T(v_j)\t[T(v_i),v^j]_{\g\ltimes_{\ro}V^*}\\
&-\p\,  T(v_i)\t\b^*v^j\t[v^i,T(v_j)]_{\g\ltimes_{\ro}V^*}
\end{align*}
Set
\begin{equation*}
OT(u,v)=[T(u),T(v)]\xg-T(\r(T(u))v-\r(T(v))u),\ \forall\ u,v\in\v.
\end{equation*}
Note that
\begin{align*}
&[v^i,T(v_j)]_{\g\ltimes_{\ro}V^*}\t\p\,  T(v_i)\t\b^*v^j\\
=& -\la \ro(T(v_j))v^i,v_m\ra v^m\t T\, \b(v_i)\t\la \b^*v^j,v_k\ra v^k\\
=& v^m\t\la v^i,\r(\p\,  T(v_j))v_m\ra\la v^j,\b v_k\ra T\, \b(v_i)\t v^k
=v^m\t T\, \b(\r(T\,  \b^2(v_k))v_m)\t v^k\\
=& v^m\t \p\,  T(\r(\p^2\,  T(v_k))v_m)\t v^k
= v^m\t \p\,  T(\r(T(v_k))v_m)\t v^k
\end{align*}
and
\begin{align*}
&[T(v_i),T(v_j)]\xg\t\b^*v^i\t\b^*v^j\\
=& [T(v_i),T(v_j)]\xg\t\la \b^*v^i,v_m\ra v^m\t\la \b^*v^j,v_k\ra v^k\\
=&  \la v^i,\b(v_m)\ra\la v^j,\b(v_k)\ra [T(v_i),T(v_j)]\xg\t v^m\t v^k
=[T\, \b(v_m),T\, \b(v_k)]\xg\t v^m\t v^k\\
=& [\p\,  T(v_m),\p\,  T(v_k)]\xg\t v^m\t v^k
=\p \,  ([T(v_m),T(v_k)]\xg)\t v^m\t v^k.
\end{align*}
Then we get
\begin{align*}
&[r,r]_{\g\ltimes_{\ro}V^*}\\
=&\p(OT(v_i,v_j))\t v^i\t v^j-v^i\t \p(OT(v_i,v_j))\t v^j+ v^i\t v^j\t \p(OT(v_i,v_j)).
\end{align*}
Therefore $r=\overline{T}-\sigma(\overline{T})$ is a solution of
the classical Hom-Yang-Baxter equation in the Hom-Lie algebra
$\g\ltimes_{\ro}V^*$ if $T$ is an \o-operator associated to \rep.
\end{proof}

\begin{rem}
Suppose that in addition, $\p$ in Theorem \ref{theorem 2} is
invertible. Then $r=\overline{T}-\sigma(\overline{T})$ is a
solution of the classical Hom-Yang-Baxter equation in the Hom-Lie
algebra $\g\ltimes_{\ro}V^*$ if and only if $T$ is an \o-operator
associated to \rep.
\end{rem}

Combining Corollary~\ref{co:semidirect}, Theorem~\ref{main theorem}, Lemma~\ref{lem 2} and Theorem~\ref{theorem 2}, we get the following conclusion.
\begin{cor}\label{cor, coboundary Hom-Lie bi, r}
 Let \hlg\ be a weakly involutive Hom-Lie algebra and \rep\ be a weakly involutive representation satisfying $\rho(\phi_\g(x))\,  \b^2=\rho(\phi_\g(x))$ for all $x\in\g$. Let $T:\v\lr\g$ be an \o-operator associated to \rep. Then there is a coboundary Hom-Lie bialgebra structure on $\g\ltimes_{\ro}V^*$ induced by $r=\overline{T}-\sigma(\overline{T})$.
\end{cor}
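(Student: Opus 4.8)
The plan is to apply Theorem~\ref{main theorem} to the semidirect product Hom-Lie algebra $\g\ltimes_{\ro}V^*$, taking for the $r$-matrix the skew-symmetric element $r=\overline{T}-\sigma(\overline{T})$. First I would confirm that $\g\ltimes_{\ro}V^*$ is a genuine, and moreover weakly involutive, Hom-Lie algebra, since this is the standing assumption under which Theorem~\ref{main theorem} operates. Because \rep\ is weakly involutive, the Hom-dual representation \drep\ is a bona fide representation by Lemma~\ref{lem:wi}, so the semidirect product is well defined. The three hypotheses of the corollary then supply exactly what Corollary~\ref{co:semidirect} needs: $\hlg$ is weakly involutive, and $\rho(\p(x))\,\b^2=\rho(\p(x))$ for all $x\in\g$. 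Hence Corollary~\ref{co:semidirect} yields that $\g\ltimes_{\ro}V^*$ is weakly involutive, with twisting map $\pd=\p\oplus\b^*$.

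Next I would verify the two input conditions of Theorem~\ref{main theorem} for this $r$, applied now to $\g\ltimes_{\ro}V^*$ with its map $\pd$ in place of $\p$. The compatibility $(\pd\t\id)r=(\id\t\pd)r$ is precisely the content of Lemma~\ref{lem 2}, which applies because the \o-operator $T$ satisfies $T\,\b=\p\,T$. Condition~(i) of Theorem~\ref{main theorem}, that $[u,r+\sigma(r)]_{\g\ltimes_{\ro}V^*}=0$ for all $u$, is automatic: by construction $r=\overline{T}-\sigma(\overline{T})$ is skew-symmetric, so $\sigma(r)=-r$ and hence $r+\sigma(r)=0$. For condition~(ii), that $\ad_{\pd(u)}[r,r]_{\g\ltimes_{\ro}V^*}=0$ for all $u$, I would invoke Theorem~\ref{theorem 2}: since $T$ is an \o-operator associated to \rep, the element $r$ solves the classical Hom-Yang-Baxter equation in $\g\ltimes_{\ro}V^*$, that is $[r,r]_{\g\ltimes_{\ro}V^*}=0$, and condition~(ii) then holds trivially. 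With (i) and (ii) in hand, Theorem~\ref{main theorem} produces a coboundary Hom-Lie bialgebra structure on $\g\ltimes_{\ro}V^*$ induced by $r$, which is exactly the assertion.

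The argument is almost entirely a matter of matching hypotheses, as all the real computation has already been done in the four cited results; I expect no serious obstacle. The only points that call for care are bookkeeping ones: remembering that Theorem~\ref{main theorem} is being applied to the \emph{big} algebra $\g\ltimes_{\ro}V^*$, so that its twisting map and brackets are $\pd$ and $[\cdot,\cdot]_{\g\ltimes_{\ro}V^*}$ rather than those of $\g$; and checking that the weakly involutive conclusion genuinely consumes all three standing assumptions via Corollary~\ref{co:semidirect}. I would also observe that only the ``if'' direction of Theorem~\ref{theorem 2} is invoked, so no invertibility of $\p$ is needed here.
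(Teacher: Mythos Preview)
Your proposal is correct and matches the paper's own proof, which simply states that the corollary follows by combining Corollary~\ref{co:semidirect}, Theorem~\ref{main theorem}, Lemma~\ref{lem 2} and Theorem~\ref{theorem 2}. You have spelled out precisely how these four ingredients fit together, including the observation that skew-symmetry of $r$ makes condition~(i) of Theorem~\ref{main theorem} automatic.
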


\begin{cor}\label{solutions in hls}
Let \hls\ be a Hom-left-symmetric algebra. Suppose that
$$u\cdot v=\psi^2(u)\cdot v,\ \forall\ u,v\in\v.$$
Let $\{v_1,\ldots,v_n\}$ be a
basis of \v\ and $\{v^1,\ldots,v^n\}$ be its dual basis.
Then we have the following conclusions.
\begin{enumerate}
\item $r_1=v^i\j v_i$ and $r_2=v^i\j \psi^2(v_i)$ are solutions of
the classical Hom-Yang-Baxter equation in the Hom-Lie algebra
$\g(V)\ltimes_{L^\circ}V^*$, where $L^\circ:=L^*\, \phi_\g$ with the notation in Eq.~\eqref{eq:ro}.
\item Suppose in addition that the commutator Hom-Lie algebra $(\g(\v),[\cdot,\cdot]_{\v},\psi)$ is weakly involutive.
Then there are coboundary Hom-Lie bialgebra structures on $\g(V)\ltimes_{L^\circ}V^*$ induced by $r_1=v^i\j v_i$ and $r_2=v^i\j \psi^2(v_i)$ respectively.
Moreover, these two induced coboundary Hom-Lie bialgebra structures are the same.
\end{enumerate}
\end{cor}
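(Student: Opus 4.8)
The plan is to obtain both parts by assembling the $\o$-operator results of this section, the only genuinely new input being a pair of \emph{null identities} for the product of $\hls$. For part (a), note first that the hypothesis $u\cdot v=\psi^2(u)\cdot v$ makes $(\v,\psi,L)$ a weakly involutive representation of the commutator Hom-Lie algebra $(\g(\v),[\cdot,\cdot]_\v,\psi)$ and exhibits $\psi^2$ as an $\o$-operator associated to it, by Corollary~\ref{2 o-operator}; the Example preceding Corollary~\ref{2 o-operator} gives $\id$ as an $\o$-operator. Since both $\id$ and $\psi^2$ commute with $\psi$, Theorem~\ref{theorem 2} applies with $T=\id$ and with $T=\psi^2$. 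Writing $\overline{T}=v^i\t T(v_i)$ as in the proof of Theorem~\ref{theorem 2}, one has $\overline{\id}-\sigma(\overline{\id})=v^i\j v_i=r_1$ and $\overline{\psi^2}-\sigma(\overline{\psi^2})=v^i\j\psi^2(v_i)=r_2$, so both are solutions of the classical Hom-Yang-Baxter equation in $\g(V)\ltimes_{L^\circ}V^*$.

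For the existence part of (b), I would apply Corollary~\ref{cor, coboundary Hom-Lie bi, r} with $T=\id$ and $T=\psi^2$. Beyond the weak involutivity of $\g(\v)$ (now assumed) and of $(\v,\psi,L)$ (already established), its remaining hypothesis reads $L(\psi(x))\,\psi^2=L(\psi(x))$, i.e.\ $\psi(x)\cdot\psi^2(w)=\psi(x)\cdot w$. To verify it, combine the weak involutivity identity $\psi^2(u)\cdot v-v\cdot\psi^2(u)=u\cdot v-v\cdot u$ of $\g(\v)$ with the hypothesis $u\cdot v=\psi^2(u)\cdot v$ to obtain the companion \emph{right} identity $v\cdot\psi^2(u)=v\cdot u$; the needed condition is this with $v=\psi(x)$. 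Corollary~\ref{cor, coboundary Hom-Lie bi, r} then yields coboundary Hom-Lie bialgebra structures on $\g(V)\ltimes_{L^\circ}V^*$ induced by $r_1$ and by $r_2$.

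The coincidence of these two structures is the main obstacle. As the coboundary structure is determined by the cobracket $\d_r(u)=[u,r]$, which is linear in $r$, it suffices to show $[w,r_1-r_2]=0$ for all $w\in\g(V)\ltimes_{L^\circ}V^*$, where $r_1-r_2=\sum_i(v^i\t e_i-e_i\t v^i)$ with $e_i:=(\id-\psi^2)(v_i)$. The left and right null identities give that each $e_i$, and likewise each $\psi(e_i)$, is a two-sided null element ($e_i\cdot w=w\cdot e_i=0$), hence central in the semidirect product; consequently $\ad_w(e_i)=0$ and the cobracket formula $\d(w)=(\pd\t\ad_w+\ad_w\t\pd)(r_1-r_2)$ collapses to $\sum_i\big(\ad_w(v^i)\t\psi(e_i)-\psi(e_i)\t\ad_w(v^i)\big)$. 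For $w\in V^*$ this is zero since $V^*$ is abelian, so $\ad_w(v^i)=0$. For $w=x\in\g(V)$, I would write $\ad_x(v^i)=L^{\circ}(x)v^i=L^*(\psi(x))v^i$ and use the transport identity $\sum_i A^*(v^i)\t C(v_i)=\sum_k v^k\t CA(v_k)$ (with $A=L(\psi(x))$ and $C=(\id-\psi^2)\psi$), which reduces both surviving summands to $(\id-\psi^2)\big(\psi^2(x)\cdot\psi(v_k)\big)$. The final ingredient is that $\psi^2$ fixes every product: $\psi^2(a\cdot b)=\psi^2(a)\cdot\psi^2(b)=a\cdot\psi^2(b)=a\cdot b$, using the algebra-homomorphism property and then the left and right null identities in turn. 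Hence $(\id-\psi^2)(\psi^2(x)\cdot\psi(v_k))=0$, so $[w,r_1-r_2]=0$ for every $w$, giving $\d_{r_1}=\d_{r_2}$ and the coincidence of the two coboundary Hom-Lie bialgebra structures.
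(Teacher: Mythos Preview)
Your proposal is correct and, for part (a) and the existence half of (b), follows exactly the paper's argument: invoke Corollary~\ref{2 o-operator} for $\psi^2$ and the preceding Example for $\id$, apply Theorem~\ref{theorem 2}, then derive the right identity $v\cdot\psi^2(u)=v\cdot u$ and feed it into Corollary~\ref{cor, coboundary Hom-Lie bi, r}.

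For the coincidence of the two cobrackets your route differs in organization from the paper's. The paper simply expands $\d_1(u+\xi)$ and $\d_2(u+\xi)$ separately, observes that all but the two terms $L^\circ_u(v^i)\otimes\psi(v_i)$ versus $L^\circ_u(v^i)\otimes\psi^3(v_i)$ (and their mirrors) coincide, and then proves these remaining terms match via the identity $L^\circ_u\,(\psi^*)^2=L^\circ_u$. You instead argue structurally: set $e_i=(\id-\psi^2)v_i$, show each $e_i$ is a two-sided null element and hence central in the semidirect product, so $\d_{r_1-r_2}(w)$ collapses to $\sum_i\big(\ad_w(v^i)\otimes\psi(e_i)-\psi(e_i)\otimes\ad_w(v^i)\big)$, and finish with the observation that $\psi^2$ fixes every product. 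The two arguments rest on the same identity---your ``$\psi^2(a\cdot b)=a\cdot b$'' is exactly the dual of the paper's $L^\circ_u\,(\psi^*)^2=L^\circ_u$---so the difference is one of packaging: your centrality step explains \emph{why} most terms cancel rather than verifying it by direct expansion, at the cost of a slightly longer derivation.
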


\begin{proof}
(a) By Corollary \ref{2
o-operator}, $(\v,\psi,L)$ is a weakly involutive representation of the Hom-Lie
algebra $(\g(\v),[\cdot,\cdot]_{\v},\psi)$. Moreover, both the identity map $\id$ and $\psi^2$ are
\o-operators of $\g(\v)$ associated to $(\v,\psi,L)$. Therefore,
the conclusion follows from Theorem \ref{theorem 2}.

(b) In fact, $(\g(\v),[\cdot,\cdot]_{\v},\psi)$ is weakly involutive if and only if
$$\psi^2(u)\cdot v-v\cdot \psi^2(u)=u\cdot v-v\cdot u,\ \forall\ u,v\in\v.$$
Hence we get $v\cdot \psi^2(u)=v\cdot u$ for all $u,v\in\v$, i.e., $L_{v}\,  \psi^2=L_{v}$ for all $v\in\v$. Note that the identity map $\id$ and $\psi^2$ are \o-operators of $\g(\v)$ associated to $(\v,\psi,L)$. By Corollary~\ref{cor, coboundary Hom-Lie bi, r},
there are coboundary Hom-Lie bialgebra structures on $\g(V)\ltimes_{L^\circ}V^*$ induced by $r_1$ and $r_2$ respectively.
Define linear maps $\d_k$ $(k=1,2)$ by
\begin{equation*}
\d_k:\g(V)\ltimes_{L^\circ}V^*\lr(\g(V)\ltimes_{L^\circ}V^*)\otimes (\g(V)\ltimes_{L^\circ}V^*),
\end{equation*}
$$\d_k(a)=[a,r_k]_{_{\g(V)\ltimes_{L^\circ}V^*}},\ \forall a\in \g(V)\ltimes_{L^\circ}V^*.$$
For any $u,v\in V$ and $\xi,\e \in V^*$, we have
\begin{align*}
\d_1(u+\xi)=& [u+\xi,r_1]_{_{\g(V)\ltimes_{L^\circ}V^*}}\\
=&[u+\xi,v^i]_{_{\g(V)\ltimes_{L^\circ}V^*}}\t \psi(v_i)+\psi^*(v^i)\t [u+\xi,v_i]_{_{\g(V)\ltimes_{L^\circ}V^*}}\\
&-[u+\xi,v_i]_{_{\g(V)\ltimes_{L^\circ}V^*}}\t \psi^*(v^i)-\psi(v_i)\t [u+\xi,v^i]_{_{\g(V)\ltimes_{L^\circ}V^*}}\\
=& L^\circ_u (v^i)\t \psi(v_i)+ \psi^*(v^i)\t ([u,v_i]_{V}- L^\circ_{v_i} (\xi))-([u,v_i]_{V}- L^\circ_{v_i} (\xi))\t \psi^*(v^i)\\
&-\psi(v_i)\t L^\circ_u (v^i),\\
\d_2(u+\xi)=& [u+\xi,r_2]_{_{\g(V)\ltimes_{L^\circ}V^*}}\\
=&[u+\xi,v^i]_{_{\g(V)\ltimes_{L^\circ}V^*}}\t \psi^{3} (v_i)+\psi^*(v^i)\t [u+\xi,\psi^{2} (v_i)]_{_{\g(V)\ltimes_{L^\circ}V^*}}\\
&-[u+\xi,\psi^{2} (v_i)]_{_{\g(V)\ltimes_{L^\circ}V^*}}\t \psi^*(v^i)-\psi^{3} (v_i)\t [u+\xi,v_i]_{_{\g(V)\ltimes_{L^\circ}V^*}}\\
=& L^\circ_u (v^i)\t \psi^{3} (v_i)+ \psi^*(v^i)\t ([u,\psi^{2} (v_i)]_{V}- L^\circ_{\psi^{2} (v_i)} (\xi))\\
&-([u,\psi^{2} (v_i)]_{V}- L^\circ_{\psi^{2} (v_i)} (\xi))\t \psi^*(v^i)-\psi^{3} (v_i)\t L^\circ_u (v^i)\\
=& L^\circ_u (v^i)\t \psi^{3} (v_i)+ \psi^*(v^i)\t ([u,v_i]_{V}- L^\circ_{v_i} (\xi))\\
&-([u,v_i]_{V}- L^\circ_{v_i} (\xi))\t \psi^*(v^i)-\psi^{3} (v_i)\t L^\circ_u (v^i).
\end{align*}
Since
\begin{align*}
\la L^\circ_u \,  (\psi^*)^{2}(\xi),v\ra=& -\la \xi,\psi^{2}(\psi(u)\cdot v)\ra=-\la \xi,\psi(u)\cdot v\ra=\la L^\circ_u(\xi),v\ra,
\end{align*}
we get $L^\circ_u \,  (\psi^*)^{2}=L^\circ_u$. Then we have
\begin{align*}
\la L^\circ_u (v^i)\t \psi^{3} (v_i),v\t \e\ra=& \la L^\circ_u (v^i),v\ra \la \psi^{3} (v_i),\e\ra=\la L^\circ_u (\la  v_i,(\psi^*)^{3}(\e)\ra v^i),v\ra\\
=&\la L^\circ_u ((\psi^*)^{3}(\e)),v\ra=\la L^\circ_u (\psi^*(\e)),v\ra=\la L^\circ_u (v^i)\t \psi (v_i),v\t \e\ra.
\end{align*}
Hence we get $L^\circ_u (v^i)\t \psi^{3} (v_i)=L^\circ_u (v^i)\t \psi (v_i)$. Similarly, we have $\psi^{3} (v_i)\t L^\circ_u (v^i)=\psi(v_i)\t L^\circ_u (v^i)$. Thus we get $\d_1=\d_2$. Hence the conclusion holds.
\end{proof}

\smallskip

\noindent {\bf Acknowledgements} The authors acknowledge supports
from the National Natural Science Foundation of China (Grant Nos. 11425104
and 11771190) and the Fundamental Research Funds for the Central
Universities.


\begin{thebibliography}{99}
\bibitem{AMM}
F.~Ammar, S.~Mabrouk, and A.~Makhlouf, Representations and
cohomology of n-ary
  multiplicative Hom-Nambu-Lie algebras, J. Geom. Phys. \textbf{61} (2011)
  1898.

\bibitem{BM}
S.~Benayadi and A.~Makhlouf, Hom-Lie algebras with symmetric
invariant
  nondegenerate bilinear forms, J. Geom. Phys. \textbf{76} (2014) 38.

\bibitem{BEM}
M.~Bordemann, O.~Elchinger, and A.~Makhlouf, Twisting Poisson
algebras, coPoisson algebras and quantization, Trav. Math. \textbf{20} (2012) 83.

\bibitem{CWZ}
Y.~Chen, Y.~Wang, and L.~Zhang, The construction of Hom-Lie
bialgebras, J. Lie Theory, \textbf{20} (2010) 767.

\bibitem{CS-purely}
L.~Cai and Y.~Sheng, Purely Hom-Lie bialgebras, Sci. China Math. (2018). https://doi.org/10.1007/s11425-016-9102-y, arXiv:1605.00722.

\bibitem{CP}
V. Chari and A. Pressley, A Guide to Quantum Groups, Cambridge University Press, Cambridge (1994).

\bibitem{D} V. Drinfeld, Hamiltonian structure on the Lie groups, Lie bialgebras and the geometric sense of the classical Yang-Baxter equations, {\em Soviet Math. Dokl.} \textbf{27} (1983), 68.

\bibitem{HLS}
J.~T. Hartwig, D.~Larsson, and S.~D. Silvestrov, Deformations of Lie
algebras using $\sigma$-derivations, J. Algebra \textbf{295} (2006) 314.

\bibitem{LS1}
D.~Larsson and S.~D. Silvestrov, Quasi-hom-Lie algebras, central
extensions and 2-cocycle-like identities, J. Algebra \textbf{288} (2005) 321.

\bibitem{LS2}
D.~Larsson and S.~D. Silvestrov, Quasi-Lie algebras, Contemp. Math.
  \textbf{391} (2005) 241.

\bibitem{MDL}  T. Ma, L. Dong and H. Li, General Hom-Lie algebras, J. Algebra Appl. {\bf 15} (2016), 1650081.

\bibitem{MLY}  T. Ma, H. Li, T. Yang, Cobraided smash product Hom-Hopf algebras, Colloq. Math. {\bf 134} (2014), 75-92.

\bibitem{MS2}
A.~Makhlouf and S.~D. Silvestrov, Hom-algebra Structures, J. Gen.
Lie Theory Appl. \textbf{2} (2008) 51.

\bibitem{MS1}
A.~Makhlouf and S.~Silvestrov, Notes on 1-parameter formal
deformations of
  Hom-associative and Hom-Lie algebras, Forum Math. \textbf{22} (2010) 715.

\bibitem{S}
Y.~Sheng, Representations of hom-Lie algebras, Algebr. Represent.
Theory, \textbf{15} (2012) 1081.

\bibitem{BS}
Y.~Sheng and C.~Bai, A new approach to hom-Lie bialgebras, J.
Algebra
  \textbf{399} (2014) 232.

\bibitem{SD}
Y.~Sheng and D.~Chen, Hom-Lie 2-algebras, J. Algebra \textbf{376}
(2013) 174.

\bibitem{Y1}
D.~Yau, Hom-algebras and homology, J. Lie Theory \textbf{19}
(2009) 409.

\bibitem{DY}
D.~Yau, Hom-Novikov algebras, J. Phys. A \textbf{44} (2011) 085202.

\bibitem{DY-hlbi}
D.~Yau, The classical Hom-Yang-Baxter equation and Hom-Lie
bialgebras, Int.
  Electron. J. Algebra \textbf{17} (2015) 11.

\end{thebibliography}
\end{document}